\documentclass[11pt,a4paper]{amsart}
\usepackage[utf8]{inputenc}
\usepackage[T1]{fontenc}
\usepackage[english]{babel}
\usepackage{lmodern,microtype}
\usepackage[margin=25mm]{geometry}
\usepackage{mathtools,amssymb,mathrsfs}
\usepackage{enumitem}
\usepackage{xcolor}
\usepackage[colorlinks=true,linkcolor=blue!40!black,
 citecolor=blue!40!black,urlcolor=blue!50!black]{hyperref}
\hypersetup{
 pdftitle={Asymptotic multiplicity sequences of graded families},
 pdfauthor={T. H. Freitas and V. H. Jorge Perez},
 pdfsubject={Asymptotic multiplicity sequences of graded families},
 pdfkeywords={multiplicity sequence, graded family, filtration, Segre number, j-multiplicity}}
\makeatletter
\@ifundefined{subjclassname@2020}{%
  \@namedef{subjclassname@2020}{\textup{2020} Mathematics Subject Classification}}{}
\makeatother
\setlist[enumerate]{itemsep=3pt,topsep=5pt}
\numberwithin{equation}{section}
\theoremstyle{plain}
\newtheorem{theorem}{Theorem}[section]
\newtheorem{proposition}[theorem]{Proposition}
\newtheorem{lemma}[theorem]{Lemma}
\newtheorem{corollary}[theorem]{Corollary}

\theoremstyle{definition}
\newtheorem{definition}[theorem]{Definition}

\newtheorem{example}[theorem]{Example}

\newtheorem{problem}[theorem]{Problem}
\theoremstyle{remark}
\newtheorem{remark}[theorem]{Remark}
\newcommand{\m}{\mathfrak m}
\newcommand{\p}{\mathfrak p}
\newcommand{\I}{\mathcal I}
\newcommand{\J}{\mathcal J}
\newcommand{\K}{\mathcal K}

\newcommand{\cvec}{\mathbf c}

\newcommand{\htop}{\operatorname{ht}}

\newcommand{\Min}{\operatorname{Min}}
\newcommand{\gr}{\operatorname{gr}}

\newcommand{\lowc}{\underline c}
\newcommand{\upc}{\overline c}

\title[Asymptotic multiplicity sequences]{Asymptotic multiplicity sequences of graded families}
\author{T. H. Freitas}
\address{Universidade Tecnol\'ogica Federal do Paran\'a, 85053-525, Guarapuava-PR, Brazil}
\email{freitas.thf@gmail.com}

\author{V. H. Jorge P\'erez}
\address{Universidade de S{\~a}o Paulo -- ICMC, 13560-970, S{\~a}o Carlos-SP, Brazil}
\email{vhjperez@icmc.usp.br}
\date{}
\subjclass[2020]{13H15, 13A30, 13B22}
\keywords{Multiplicity sequence, graded family of ideals, filtration, $j$-multiplicity, integral closure. The second author CNPq-Brazil, grant 304851/2025-6 and -FAPESP-Brazil, grant 2025/21618-0}

\begin{document}
\begin{abstract}
We study asymptotic multiplicity sequences of graded families of arbitrary ideals in Noetherian local rings. We characterize convergence of the zeroth component and prove that the first positive component not forced to vanish always converges under natural equidimensionality and catenarity assumptions. In contrast, higher components may exhibit arbitrary asymptotic behavior: even in $k[[x,y]]$, integrally closed filtrations with fixed radical, height, and analytic spread can realize any prescribed lower and upper limits. For Noetherian graded filtrations, however, all asymptotic components exist, are finite and rational, and satisfy Rees-type and valuative criteria for integral dependence. In the reduced pure-dimensional complex analytic setting, a common principalizing modification provides a second convergence mechanism without finite-generation assumptions. We also develop a diagonal mixed theory: for graded $\mathfrak m$-primary families, the top diagonal components are sums of Cutkosky's mixed multiplicities, while on a common analytic model all diagonal mixed components converge. The latter result extends the one-family common-model theorem.

\end{abstract}
\maketitle

\section{Introduction}
\label{sec:introduction}

Multiplicity provides one of the most effective numerical ways of
capturing the local complexity of ideals and singularities. In the
classical setting, Hilbert--Samuel multiplicity records the leading
growth of the colengths of powers of an $\mathfrak m$-primary ideal
and plays a central role in local algebra and intersection theory.
Beyond the primary case, however, the relevant colengths may be
infinite and a single numerical invariant is no longer sufficient.
This naturally leads to finer multiplicity invariants adapted to
arbitrary ideals.

Achilles and Manaresi \cite{AM97} introduced the multiplicity sequence
through the Hilbert polynomial of a naturally associated bigraded
algebra (see also \cite{AM06}). Their construction extends
Hilbert--Samuel multiplicity to arbitrary ideals and incorporates
other important invariants, including the $j$-multiplicity. Further
structural properties of the multiplicity sequence for arbitrary
ideals were developed by Callejas--Bedregal and Jorge P\'erez
\cite{CBJPAdd}. With the codimension indexing used in \cite{PTUV},
we write
\[
\cvec(I;R)=(c_0(I;R),\ldots,c_d(I;R)),
\qquad d=\dim R,
\]
so that $c_d(I;R)=j(I;R)$, while for an $\mathfrak m$-primary ideal
the sequence reduces to $(0,\ldots,0,e(I;R))$.

The algebraic construction has a geometric counterpart. Gaffney and
Gassler \cite{GaffneyGassler} developed Segre numbers in complex
analytic geometry, and Achilles and Rams \cite{AchillesRams}
identified them with the corresponding components of the
Achilles--Manaresi sequence. More recently, Polini, Trung, Ulrich, and
Validashti \cite{PTUV} showed that the multiplicity sequence also
detects integral dependence: for comparable ideals in suitable
equidimensional and universally catenary local rings, equality of the
multiplicity sequences characterizes integral dependence. Related
developments involving multidegrees, families, and integral dependence
appear in \cite{CRPU}. Thus the multiplicity sequence carries both
algebraic and intersection-theoretic information and provides a
natural refinement of classical multiplicity theory.

A parallel asymptotic theory has developed for graded families. If
$\mathcal I=\{I_n\}_{n\geq0}$ is a graded family of
$\mathfrak m$-primary ideals, Cutkosky proved that
\[
e(\mathcal I;R)
=
\lim_{n\to\infty}\frac{e(I_n;R)}{n^d}
\]
exists over an arbitrary Noetherian local ring
\cite[Theorem~1.3]{Cut}. He also constructed the corresponding mixed
multiplicities for several graded $\mathfrak m$-primary families
\cite[Theorem~1.6]{Cut}. For arbitrary ideals, the homogeneity
relation $c_i(I^n;R)=n^ic_i(I;R)$
suggests the normalization $\frac{c_i(I_n;R)}{n^i}.$
This leads to the basic question studied in this paper: for a graded
family $\mathcal I=\{I_n\}_{n\geq0}$ of arbitrary ideals, when do the
limits
\[
c_i(\mathcal I;R)
=
\lim_{n\to\infty}\frac{c_i(I_n;R)}{n^i},
\qquad 0\leq i\leq d,
\]
exist and remain finite? Unlike the primary case, different components
can display substantially different asymptotic behavior.

The first possible components exhibit a strong rigidity.
Proposition~\ref{prop:zeroth-component} characterizes the existence of
$c_0(\mathcal I;R)$ in terms of additive semigroups attached to the
top-dimensional minimal primes of $R$. In particular,
$c_0(I_n;R)$ is eventually periodic for every graded family and is
constant for a graded filtration. If $R$ is equidimensional and
catenary and $h=\operatorname{ht}I_1>0$, then
Theorem~\ref{thm:height} shows that
$c_i(\mathcal I;R)=0$ for $i<h$, while
$c_h(\mathcal I;R)$ always exists and satisfies $0\leq c_h(\mathcal I;R)\leq c_h(I_1;R).$
Thus the first component not forced to vanish is governed by the
Hilbert--Samuel multiplicities of localized graded primary families.
Under certain assumptions,
Corollary~\ref{cor:fixed-radical-equimultiple} shows that the entire
asymptotic sequence is concentrated in this single degree.

The higher components behave very differently. Already on the
two-dimensional regular local ring $k[[x,y]]$, we construct
integrally closed graded filtrations with fixed radical, height, and
analytic spread for which the normalized top component has arbitrary
prescribed lower and upper limits. More precisely,
Corollary~\ref{ex:realization} realizes every pair
$0\leq A\leq B\leq+\infty$. Hence neither the filtration property nor
integral closedness is sufficient to force convergence. Moreover,
Remark~\ref{ex:non-noetherian-algebra} shows that all real valuations
nonnegative on $R$ have the same asymptotic order on these examples,
independently of the prescribed values $A$ and $B$. Thus asymptotic
valuation orders alone do not determine the higher components of the
multiplicity sequence.

Finite generation changes the picture once the decreasing condition
is imposed. Example~\ref{ex:periodic-zero} first shows that finite
generation alone is not enough: a Noetherian graded family that is not
a filtration may already fail to have a convergent zeroth component.
For a Noetherian graded filtration, however,
Theorem~\ref{thm:noetherian-convergence} proves that every asymptotic
component exists and is finite. More precisely, there is a standard
Veronese index $q\geq1$ such that
\[
c_i(\mathcal I;R)
=
\frac{c_i(I_q;R)}{q^i}
\qquad(0\leq i\leq d).
\]
In particular, all asymptotic components are rational. The proof shows
that finite generation reduces each residue class modulo $q$ to powers
of $I_q$ multiplied by a fixed factor, while the fixed-factor
estimates developed in Section~\ref{sec:one-ideal} control the
resulting error terms. The same reduction gives a total multiplicity
formula: Proposition~\ref{prop:total-multiplicity-family} expresses
the multiplicity of the associated graded ring of the standard
Veronese in terms of the asymptotic sequence,
\[
e_{\mathfrak N_q}\bigl(G_q(\mathcal I)\bigr)
=
\sum_{i=0}^{d}q^ic_i(\mathcal I;R).
\]

The Noetherian theory also yields a family version of the Rees
criterion. For equidimensional universally catenary $R$ and nested
Noetherian graded filtrations
$\mathcal I\subseteq\mathcal J$,
Theorem~\ref{thm:family-Rees-criterion} shows that the following are
equivalent: $\mathscr R(\mathcal J)$ is integral over
$\mathscr R(\mathcal I)$; one has
$c_i(\mathcal I;R)\leq c_i(\mathcal J;R)$ for every $i$; the two
asymptotic multiplicity sequences are equal; and the two family
algebras have the same integral closure in $R[t]$. This extends the
fixed-ideal criterion of \cite{PTUV}. To clarify its valuative
content, we distinguish termwise integral closure, integral closure of
the family algebra, and divisorial saturation. For Noetherian
filtrations in an excellent local domain, algebraic closure and
divisorial saturation coincide, and
Corollary~\ref{thm:noeth-valuative-rees} gives
\[
\cvec(\mathcal I;R)=\cvec(\mathcal J;R)
\quad\Longleftrightarrow\quad
\widetilde{\mathcal I}^{\,\mathrm{div}}
=
\widetilde{\mathcal J}^{\,\mathrm{div}}.
\]
The corresponding equality may fail for non-Noetherian families,
showing again that finite generation is essential in this comparison.
For $\mathfrak m$-primary families, the resulting valuative picture is
consistent with Cutkosky's asymptotic Rees theorem and does not require
finite generation.

A second mechanism for convergence is geometric and does not require
Noetherianity of the family algebra. Let $(X,0)$ be a reduced complex
analytic germ of pure dimension and let $R=\mathcal O_{X,0}$.
If a single proper modification $\pi:Y\to X$ makes every
$I_n\mathcal O_Y$ invertible, then
Theorem~\ref{thm:common-analytic-model-convergence} proves the
existence and finiteness of all positive asymptotic components. The
common model places the corresponding divisors in a fixed
finite-dimensional real vector space; their coefficients are
subadditive, so Fekete's lemma produces limiting divisors, while
intersection theory converts these limits into the asymptotic
multiplicity components. This viewpoint is related to the recent work
of Cutkosky and Monta\~no \cite{CutMontanoFamilies}, who obtain
intersection-theoretic formulas for asymptotic multiplicities and
degree functions of graded $\mathfrak m$-primary families. Our result
proceeds in a different direction by treating all positive components
of the Achilles--Manaresi sequence for arbitrary ideals under a
common-model hypothesis.

The final part of the paper develops a diagonal mixed theory.
Callejas--Bedregal and Jorge P\'erez \cite{CBJP} introduced mixed
multiplicity-sequence coefficients for several arbitrary ideals.
Using their diagonal multigraded construction with codimension
indexing, we write
\[
c^\Delta_{i,\boldsymbol\alpha}
(J_1|\cdots|J_s;R),
\qquad |\boldsymbol\alpha|=i-1.
\]
For $s=1$ these reduce to the ordinary components,
$c^\Delta_{i,(i-1)}(J;R)=c_i(J;R)$. In the
$\mathfrak m$-primary case, Theorem~\ref{thm:mixed-primary} proves
that all diagonal asymptotic components exist without any
finite-generation assumption, vanish for $i<d$, and for
$|\boldsymbol\alpha|=d-1$ satisfy
\[
c^\Delta_{d,\boldsymbol\alpha}
(\mathcal I^{(1)}|\cdots|\mathcal I^{(s)};R)
=
\sum_{j=1}^s
e\bigl(
(\mathcal I^{(1)})^{[\alpha_1]},\ldots,
(\mathcal I^{(j)})^{[\alpha_j+1]},\ldots,
(\mathcal I^{(s)})^{[\alpha_s]};R
\bigr).
\]
Thus a diagonal coefficient is generally a sum of adjacent
Rees--Teissier mixed multiplicities rather than a single mixed
multiplicity.

For nonprimary Noetherian filtrations a new issue appears.
Simultaneous homogeneity controls the standard Veronese subsequence,
but the remaining residue classes require a mixed fixed-factor
estimate. Proposition~\ref{thm:mixed-noetherian-convergence} proves
convergence under this estimate, while
Problem~\ref{prob:mixed-fixed-factor} isolates its validity in general
as a concrete open question. Whenever the diagonal mixed components
exist, Proposition~\ref{thm:asymptotic-product} expresses the
asymptotic component of the termwise product family as the
corresponding multinomial sum. In particular, the ordinary
multiplicity components of the product of Noetherian filtrations
exist independently of the unresolved separation of the individual
mixed summands.

In the complex analytic setting, the common-model method resolves the
mixed convergence problem under simultaneous principalization.
Lemma~\ref{lem:mixed-intersection-formula} derives an intersection
formula directly from the diagonal multigraded quotient, retaining
the local intersection data on the proper fiber.
Theorem~\ref{thm:mixed-common-model-convergence} then proves that all
diagonal mixed components converge whenever the families are
simultaneously principalized on a single proper modification. Taking
$s=1$ recovers
Theorem~\ref{thm:common-analytic-model-convergence}, so the mixed
theorem genuinely extends the one-family common-model result.
Problem~\ref{prob:common-model} asks for an algebraic counterpart over
excellent local domains and isolates the local intersection-theoretic
input still missing from the general case.

The paper is organized as follows.
Section~\ref{sec:one-ideal} recalls the multiplicity sequence and
establishes the homogeneity, additivity, torsion-reduction, and
fixed-factor results used throughout the paper.
Section~\ref{sec:definition} introduces the asymptotic multiplicity
sequence, determines its first possible components, constructs the
surface examples with arbitrary higher behavior, and proves the
Noetherian and analytic common-model convergence theorems.
Section~\ref{sec:closure} compares termwise, algebraic, and divisorial
closures and proves the family Rees and valuative criteria.
Finally, Section~\ref{sec:mixed-noetherian-filtrations} develops the
diagonal mixed theory, identifies the primary case with Cutkosky's
mixed multiplicities, isolates the mixed fixed-factor problem, studies
termwise products, and proves the mixed intersection and common-model
convergence theorems.

\section{Preliminaries on multiplicity sequences}
\label{sec:one-ideal}

Throughout the paper, unless otherwise stated, $(R,\m,k)$ is a
Noetherian local ring of dimension $d\geq1$. In this section,
$I\subsetneq R$ is an ideal.
We recall only the notation used later.  For details concerning these topics, see \cite{AM97,AM06,PTUV}.

\subsection*{The multiplicity sequence}

Set
\[
 B=\mathcal R(I)=R[It],\qquad
 G=\gr_I(R)=\bigoplus_{v\geq0}I^v/I^{v+1},
 \qquad
 \ell(I)=\dim(B/\m B).
\]
The $(u,v)$-component of
$\gr_{\m}(G)$ is
\[
 \frac{\m^uI^v+I^{v+1}}{\m^{u+1}I^v+I^{v+1}}.
\]
The double cumulative function is
$H_I(r,s)=\sum_{v=0}^s
\lambda_R(I^v/(\m^{r+1}I^v+I^{v+1}))$.
For $r,s\gg0$ it agrees with a polynomial whose homogeneous part
of total degree $d$ is
\begin{equation}\label{eq:multiplicity-sequence-polynomial}
 \sum_{i=0}^d
 \frac{c_i(I;R)}{(d-i)!i!}\,r^{d-i}s^i.
\end{equation}

\begin{definition}\label{def:multiplicity-sequence}
The sequence \(\cvec(I;R)=(c_0(I;R),\ldots,c_d(I;R))\)
defined by \eqref{eq:multiplicity-sequence-polynomial} is called the
\emph{multiplicity sequence} of $I$.
\end{definition}

If $c_j^{\rm AM}(I;R)$ denotes the ordinary dimension indexing of
Achilles--Manaresi, then
\begin{equation}\label{eq:indexing-AM-PTUV}
 c_i(I;R)=c_{d-i}^{\rm AM}(I;R).
\end{equation}
Thus our index records codimension.  This is the convention of
\cite[Section~2]{PTUV}.

\begin{proposition}\label{prop:classical}
The coefficients $c_i(I;R)$ are nonnegative integers. Furthermore:
\begin{enumerate}[label=\textup{(\roman*)}]
    \item $c_i(I;R) = 0$ for $i < d - \dim(R/I)$ or $i > \ell(I)$.
    \item $c_0(I;R) = \sum_{\substack{\p \supseteq I \\ \dim(R/\p) = d}} \lambda_{R_{\p}}(R_{\p}) e(R/\p)$.
    \item $c_d(I;R) = j(I;R)$.
    \item If $I$ is $\m$-primary, then $\cvec(I;R) = (0, \ldots, 0, e(I;R))$.
\end{enumerate}
\end{proposition}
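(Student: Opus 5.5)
The plan is to work with the bigraded ring $T=\gr_{\m}(G)$, where $G=\gr_I(R)$. By the definition of $H_I$ and \eqref{eq:multiplicity-sequence-polynomial}, $T$ is a standard bigraded algebra over $k$, generated in degrees $(1,0)$ and $(0,1)$. Its $(u,v)$-component has the stated form, and $\dim T=\dim G=d$. Integrality and nonnegativity of the $c_i(I;R)$ then come from the standard theory of bigraded Hilbert polynomials, following Achilles--Manaresi and \cite{AM97,PTUV}. The homogeneous top-degree part of the cumulative Hilbert polynomial of $T$ is $\sum_i \frac{e_{(d-i,i)}}{(d-i)!\,i!}r^{d-i}s^i$, with mixed multiplicities $e_{(a,b)}\geq0$ in $\mathbb Z$. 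Nonnegativity follows by cutting down with general bihomogeneous linear forms, which reduces $e_{(a,b)}$ to a length. So $c_i(I;R)$ is the mixed multiplicity of $T$ in bidegree $(d-i,i)$, and the main statement is just the translation of this fact into the codimension indexing \eqref{eq:indexing-AM-PTUV}.

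For (i), use the two projections. The vanishing $c_i=0$ for $i>\ell(I)$ comes from the $s$-degree. For fixed $u$, the $v$-growth of $T_{(u,\ast)}$ is governed by $G/\m G$-modules (more precisely, by $\m^uG/\m^{u+1}G$), whose dimension is at most $\dim(B/\m B)=\ell(I)$. Hence the Hilbert polynomial has $s$-degree at most $\ell(I)$. The vanishing for $i<d-\dim(R/I)$ comes from the $r$-degree. For fixed $v$, the modules $I^v/I^{v+1}$ are $R/I$-modules, so their $\m$-adic Hilbert functions have degree at most $\dim(R/I)$. This gives total $r$-degree at most $\dim R/I$ in the $r$-variable, so $d-i\leq \dim(R/I)$.

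For (ii), set $s=0$ in the cumulative sum. The coefficient $c_0$ is the leading coefficient of $\sum_{v\le s}$, read in the $r$-direction. Taking $s$ large but fixed, the dominant contribution in degree $d$ comes from the terms with $\dim(I^v/I^{v+1})=d$. Alternatively, use additivity: $c_0$ equals $e(\m;G)$ restricted to the components of $G$ of dimension $d$ lying over $\m$-degree zero. Concretely, I would use the associativity formula over the $d$-dimensional minimal primes of $G$. These correspond to the primes $\p\supseteq I$ with $\dim R/\p=d$, which are minimal primes of $R$. Their contribution is $\lambda(R_\p)e(R/\p)$, since localizing at such $\p$ makes $IR_\p$ nilpotent-primary, and $G_\p$ has length $\lambda(R_\p)$ in total. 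This identification of the $r^d$ coefficient with the associativity sum is the step I expect to be the most delicate.

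(iii) is the standard identification $c_d=e(\gr_I(R)_{\ast}\text{ of }H^0_{\m}(G))=j(I;R)$. The top $s$-degree coefficient only sees $\Gamma_{\m}(G)$, whose Hilbert multiplicity is by definition the $j$-multiplicity. For (iv), if $I$ is $\m$-primary then $\dim R/I=0$, so (i) kills every $c_i$ with $i<d$. Moreover $j(I;R)=e(I;R)$ because $G$ is then of finite length in each degree, so $\Gamma_\m(G)=G$; this gives the final entry via (iii).
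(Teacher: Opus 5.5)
The paper does not prove this proposition; it is recalled as background, and the reader is referred to \cite{AM97,AM06,PTUV}. Your proposal instead derives it directly from the double cumulative function by freezing one variable at a time, and this route is sound.

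Fixing $u$, the slices $\m^uG/\m^{u+1}G$ are graded modules over $G/\m G\cong B/\m B$. So for each large $r$ the eventual polynomial $P(r,s)$ has $s$-degree at most $\ell(I)$. Its coefficients in $s$ are polynomials in $r$ vanishing for all large $r$, so the bound holds for $P$ itself. Fixing $v$, the $R/I$-modules $I^v/I^{v+1}$ bound the $r$-degree by $\dim R/I$.

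For fixed large $s$, the coefficient of $r^d$ is $\frac{1}{d!}\sum_{v\le s}e_d(\m;I^v/I^{v+1})=\frac{1}{d!}e_d(\m;R/I^{s+1})$, where $e_d$ is the $d$-dimensional multiplicity symbol. Associativity over the primes $\p$ of $R$ with $\dim R/\p=d$ then gives (ii), because $I^{s+1}R_\p=0$ for $s\gg0$ when $\p\supseteq I$. For fixed large $r$, Artin--Rees splits $H^0_\m(G)$ off modulo $\m^{r+1}$. A nonzerodivisor in $\m$ on $G/H^0_\m(G)$ shows the remaining part has $s$-degree at most $d-1$, which gives (iii).

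Your approach gives a self-contained proof from the definition. The paper's citation is shorter and connects with the general-element formula \cite[Proposition~2.1]{PTUV} that it uses later.

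Three points need tightening.
\begin{enumerate}
\item You cannot set $s=0$, since $H_I$ agrees with its polynomial only for $s\gg0$. Your fixed-large-$s$ version is the correct one.
\item The primes in (ii) are primes of $R$ in $\operatorname{Supp}_RG$, not minimal primes of the ring $G$. If $R$ is a domain and $I\neq0$, then $G$ still has $d$-dimensional minimal primes, yet $c_0(I;R)=0$.
\item Cutting down by general bihomogeneous forms handles only $c_1,\ldots,c_{d-1}$. These are the degree-$(d-2)$ mixed multiplicities of the non-cumulative bigraded Hilbert polynomial of $T$. The extreme coefficients $c_0$ and $c_d$ come from the boundary strips of the cumulative sum, where the bigraded Hilbert function is not yet polynomial. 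Their integrality and nonnegativity should therefore be read off from (ii) and (iii).
\end{enumerate}
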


Notice that
$c_0$ is governed by the top-dimensional components contained in
$V(I)$, whereas $c_d$ is the $j$-multiplicity. The first possible positive component has the following associativity
formula. Assume that $R$ is equidimensional and catenary, and put
$h=\operatorname{ht}I>0$.  Then, \cite[Proposition~2.4(a)]{PTUV} gives that
$$ c_h(I;R)=
 \sum_{\substack{\p\in\Min(R/I)\\ \operatorname{ht}\p=h}}
 e(IR_{\p};R_{\p})e(R/\p).$$

We collect here the basic properties and comparison results for multiplicity sequences that will be used throughout the paper. Besides recalling the classical one-ideal theory, we establish the homogeneity, additivity, torsion-reduction, and fixed-factor estimates that provide the main technical tools for the asymptotic arguments developed in the following sections.

\begin{lemma}\label{lem:additivity}
Let $A$ be a standard graded Noetherian $R$-algebra and \(0\longrightarrow N'\longrightarrow N\longrightarrow N'' \longrightarrow0\)
an exact sequence of finite graded $A$-modules.  If the cumulative
Hilbert polynomials \(\mathscr H_M(r,s)= \sum_{v=0}^{s}\lambda_R(M_v/\m^{r+1}M_v)\)
have total degree at most $D$, then their homogeneous parts of degree
$D$ are additive.
\end{lemma}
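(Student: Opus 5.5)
The plan is to reduce the statement to additivity of lengths, degree by degree, and then control the error terms that come from the filtration by powers of $\m$. Fix $v$ and $r$. The exact sequence $0\to N'_v\to N_v\to N''_v\to 0$ of finite $R$-modules, after tensoring with $R/\m^{r+1}$, gives the right exact sequence
\[
N'_v/\m^{r+1}N'_v\longrightarrow N_v/\m^{r+1}N_v\longrightarrow N''_v/\m^{r+1}N''_v\longrightarrow 0 .
\]
We assume throughout, as is implicit in the statement, that the lengths involved are finite, so each $N_v$ has finite length modulo $\m$-powers. The kernel of the first map is $(N'_v\cap\m^{r+1}N_v)/\m^{r+1}N'_v$, and this yields
\[
\lambda(N_v/\m^{r+1}N_v)=\lambda(N''_v/\m^{r+1}N''_v)+\lambda(N'_v/\m^{r+1}N'_v)-\lambda\bigl((N'_v\cap\m^{r+1}N_v)/\m^{r+1}N'_v\bigr).
\]
Summing over $v\le s$ then gives
\[
\mathscr H_N=\mathscr H_{N'}+\mathscr H_{N''}-E(r,s),
\]
where $E(r,s)\ge 0$ is the summed correction term. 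It therefore suffices to show that $E(r,s)$ is bounded by a function of total degree less than $D$.

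To control $E$, I will use Artin--Rees in a form uniform in $v$. Consider the bigraded algebra $A'=\gr_{\m}(R)\otimes$-type Rees algebra $\mathcal A=\bigoplus_{u}\m^u A\,t^u$, which is Noetherian and graded over $A[\m t]$, and the finite $\mathcal A$-module $\bigoplus_u\m^uN$. Its submodule $\bigoplus_u(N'\cap\m^uN)$ is finitely generated, because $\mathcal A$ is Noetherian. Hence there is a $c$, independent of $v$ (since the generators occur in finitely many bidegrees), with
\[
N'_v\cap\m^{r+1}N_v\subseteq\m^{r+1-c}N'_v
\qquad\text{for } r\ge c .
\]
This gives
\[
E(r,s)\le\sum_{v\le s}\lambda\bigl(\m^{r+1-c}N'_v/\m^{r+1}N'_v\bigr)=\mathscr H_{N'}(r,s)-\mathscr H_{N'}(r-c,s).
\]
For $r,s\gg0$ this is a difference of polynomial values. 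Since $\mathscr H_{N'}$ has total degree at most $D$, the difference $\mathscr H_{N'}(r,s)-\mathscr H_{N'}(r-c,s)$ is a polynomial of total degree at most $D-1$, as it is a finite difference in $r$.

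The remaining step is to upgrade this bound to the conclusion. The error $E$ equals $\mathscr H_{N'}+\mathscr H_{N''}-\mathscr H_N$, which for large $r,s$ is a polynomial of total degree at most $D$. I then need the following fact: if a polynomial is nonnegative and bounded above by a polynomial of total degree less than $D$ on the region $r,s\gg0$, then its degree-$D$ homogeneous part vanishes.

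I expect this to be the main obstacle, and I would settle it by evaluating along rays $(r,s)=(\lambda t,\mu t)$ with $\lambda,\mu>0$ rational and $t\to\infty$. Along such a ray the degree-$D$ part contributes $t^D$ times its value at $(\lambda,\mu)$, while the upper bound grows like $o(t^D)$. So the degree-$D$ part, evaluated at $(\lambda,\mu)$, is at most $0$. The lower bound $E\ge 0$ forces that value to be at least $0$, so it is exactly $0$. A homogeneous polynomial vanishing on a Zariski-dense open cone is identically $0$. Thus the degree-$D$ parts of $\mathscr H_N$ and $\mathscr H_{N'}+\mathscr H_{N''}$ coincide, which is the claimed additivity.
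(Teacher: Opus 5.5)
Your proposal is correct and follows essentially the same route as the paper. The paper also reduces modulo $\m^{r+1}$ and uses Artin--Rees to sandwich the correction term, and your inequality $0\le E(r,s)\le \mathscr H_{N'}(r,s)-\mathscr H_{N'}(r-c,s)$ is the paper's sandwich $\mathscr H_{N'}(r-c,s)\le Q(r,s)\le \mathscr H_{N'}(r,s)$ rewritten. It then compares top homogeneous parts by scaling $(r,s)\mapsto(tr,ts)$ as you do along rays; you simply spell out the uniformity of the Artin--Rees constant in $v$ and the density argument in more detail.
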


\begin{proof}
For every $r\geq0$, reduction modulo $\mathfrak m^{r+1}$ gives an exact
sequence
\[
0\longrightarrow
\frac{N'}{N'\cap\mathfrak m^{r+1}N}
\longrightarrow
\frac{N}{\mathfrak m^{r+1}N}
\longrightarrow
\frac{N''}{\mathfrak m^{r+1}N''}
\longrightarrow0.
\]
Since $\mathfrak m$ has degree zero, this sequence is graded. Summing
the lengths of its components up to degree $s$, we obtain
$\mathscr H_N(r,s)=\mathscr H_{N''}(r,s)+Q(r,s)$, where
$Q(r,s)=\sum_{v=0}^s
\lambda_R\bigl(N'_v/(N'_v\cap\mathfrak m^{r+1}N_v)\bigr)$.

By Artin--Rees, there exists $c\geq0$ such that, for $r\gg0$,
$\mathfrak m^{r+1}N'\subseteq
N'\cap\mathfrak m^{r+1}N\subseteq
\mathfrak m^{r+1-c}N'$. Hence
$\mathscr H_{N'}(r-c,s)\leq Q(r,s)\leq\mathscr H_{N'}(r,s)$.
A fixed translation in the $r$-variable does not change the
homogeneous part of total degree $D$. Dividing these inequalities by
$t^D$ after replacing $(r,s)$ by $(tr,ts)$ and letting $t\to\infty$
shows that $Q$ and $\mathscr H_{N'}$ have the same homogeneous part of
degree $D$. Therefore the degree-$D$ homogeneous parts satisfy $[\mathscr H_N]_D
=
[\mathscr H_{N'}]_D+[\mathscr H_{N''}]_D,$
as required.
\end{proof}

\begin{proposition}
\label{prop:homogeneity}
For every $a\geq1$ and $0\leq i\leq d$,
\begin{equation}\label{eq:homogeneity}
 c_i(I^a;R)=a^ic_i(I;R).
\end{equation}

\end{proposition}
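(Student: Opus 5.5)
We compare the bigraded objects attached to $I^a$ and to $I$ through the Veronese construction. Put $G=\gr_I(R)$ and $G'=\gr_{I^a}(R)=\bigoplus_{v\ge0}I^{av}/I^{av+a}$. The double cumulative function of $I^a$ is
\[
H_{I^a}(r,s)=\sum_{v=0}^{s}\lambda_R\bigl(I^{av}/(\m^{r+1}I^{av}+I^{av+a})\bigr).
\]
Each summand decomposes along the $I$-adic filtration $I^{av}\supseteq I^{av+1}\supseteq\cdots\supseteq I^{av+a}$. For $0\le j<a$ we have the exact sequences
\[
0\to \frac{I^{av+j+1}}{I^{av+j+1}\cap(\m^{r+1}I^{av}+I^{av+a})+\cdots}\to\cdots
\]
In cleaner terms, the plan is to show that
\[
\lambda\bigl(I^{av}/(\m^{r+1}I^{av}+I^{av+a})\bigr)
\quad\text{and}\quad
\sum_{j=0}^{a-1}\lambda\bigl(I^{av+j}/(\m^{r+1}I^{av+j}+I^{av+j+1})\bigr)
\]
agree up to a shift $r\mapsto r\pm c$ with $c$ independent of $v$. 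The upper bound is immediate, since $\m^{r+1}I^{av+j}\subseteq \m^{r+1}I^{av}$. For the lower bound we use an Artin--Rees type statement uniform in $v$: the $G$-module structure of $\gr_\m(G)$ is finitely generated, and applying Artin--Rees to the finite $\mathcal R(I)$-module filtration gives $\m^{r+1}I^{av}\cap I^{av+j}\subseteq \m^{r+1-c}I^{av+j}+I^{av+a}$ for a fixed $c$. This is precisely the kind of estimate used in the proof of Lemma~\ref{lem:additivity}.

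Summing over $v\le s$ then gives
\[
H_I(r-c,as+a-1)-H_I(r-c,-1)\ \lesssim\ H_{I^a}(r,s)\ \lesssim\ H_I(r,as+a-1),
\]
more precisely sandwiching $H_{I^a}(r,s)$ between $H_I(r-c,as+a-1)$ and $H_I(r,as+a-1)$. A fixed translation in $r$ or $s$ does not change the top-degree homogeneous part. Arguing as in Lemma~\ref{lem:additivity}, we replace $(r,s)$ by $(tr,ts)$, divide by $t^d$ and let $t\to\infty$. The degree-$d$ part of $H_{I^a}(r,s)$ therefore equals the degree-$d$ part of $H_I(r,as)$. By \eqref{eq:multiplicity-sequence-polynomial} this gives
\[
\sum_i \frac{c_i(I^a;R)}{(d-i)!\,i!}r^{d-i}s^i=\sum_i\frac{c_i(I;R)}{(d-i)!\,i!}r^{d-i}a^is^i .
\]
Comparing coefficients of $r^{d-i}s^i$ yields \eqref{eq:homogeneity}.

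The main obstacle is the uniform Artin--Rees step, i.e.\ making the constant $c$ independent of $v$. We handle it by viewing $\bigoplus_v I^{v}$ as the finite graded module $B=\mathcal R(I)$ over itself, with the $\m$-adic filtration $\m^rB$. The submodule $B_{+}^{(\ge \cdot)}$ is given by the ideal $I^{j}tB$-shifts, and Artin--Rees for the ideal $\m B$ in the Noetherian ring $B$ applied to the submodule $I^jB(-j)\subseteq B$ gives a constant uniform in the degree. If this becomes awkward, an alternative is to note that $G$ is a finite module over the Veronese $G^{(a)}$ plus its shifts, namely $G=\bigoplus_{j<a}G^{(a,j)}$, with a filtration whose factors relate to $G'$. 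Then Lemma~\ref{lem:additivity} applied to $A=\gr$ of the $a$-th Veronese delivers the additivity of the top-degree parts directly.
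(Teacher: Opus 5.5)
Your argument is correct, and it differs from the paper's in how the coefficients are extracted.

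The paper's route is as follows:
\begin{itemize}
\item It works with the per-degree function $h_I(r,u)=\lambda_R\bigl(I^u/(\mathfrak m^{r+1}I^u+I^{u+1})\bigr)$.
\item It applies Lemma~\ref{lem:additivity} along the same filtration $I^{av}\supseteq\cdots\supseteq I^{av+a}$ to add the degree-$(d-1)$ parts.
\item It then evaluates the power sums $\sum_{v\le s}(av+j)^{i-1}$.
\item Because $c_0$ does not appear in the eventual polynomial of $h_I$, it must treat $i=0$ separately, via the associativity formula of Proposition~\ref{prop:classical}(ii).
\end{itemize}

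You instead sandwich the double cumulative function itself:
$H_I(r-c,as+a-1)\le H_{I^a}(r,s)\le H_I(r,as+a-1)$ for $r\ge c$.
After scaling $(r,s)\mapsto(tr,ts)$, this gives $[P_{I^a}]_d(r,s)=[P_I]_d(r,as)$ for the degree-$d$ parts of the eventual polynomials. That identity yields every $c_i$ at once, including $c_0$, with no appeal to the associativity formula.

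The uniform estimate you need is the one you point to, but your notation there is garbled. The clean version is:
\begin{itemize}
\item $I^jB$, viewed as an ideal of $B=\mathcal R(I)$, has degree-$u$ component $I^{u+j}t^u$.
\item Artin--Rees for $\mathfrak m B$ in the Noetherian ring $B$ then gives $\mathfrak m^{r+1}I^u\cap I^{u+j}\subseteq\mathfrak m^{r+1-c}I^{u+j}$, with $c$ independent of $u$ and $r$ (take the maximum over $0\le j<a$).
\item The fallback in your last paragraph is unnecessary.
\end{itemize}

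The unfinished exact sequence at the start should be replaced by the modular-law identity
$\lambda(N/\mathfrak m^{r+1}N)=\sum_{j}\lambda\bigl(N_j/(N_{j+1}+N_j\cap\mathfrak m^{r+1}N)\bigr)$, where $N_j=I^{av+j}/I^{av+a}$. Both of your bounds then follow termwise.

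What the paper's route buys is the additivity-plus-shift template it reuses in Lemmas~\ref{lem:positive-fixed-factor} and~\ref{lem:mixed-homogeneity}. Yours is shorter, self-contained, and uniform in $i$.
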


\begin{proof}
We first consider $1\leq i\leq d$. Set $h_I(r,u)=\lambda_R\bigl(I^u/(\mathfrak m^{r+1}I^u+I^{u+1})\bigr)$. Since $H_I(r,s)=\sum_{u=0}^s h_I(r,u)$, the homogeneous part of total degree $d-1$ of the eventual polynomial associated with $h_I(r,u)$ is $\sum_{i=1}^d \frac{c_i(I;R)}{(d-i)!(i-1)!}r^{d-i}u^{i-1}$. For each $v\geq0$, the quotient $I^{av}/I^{a(v+1)}$ has a filtration with successive quotients $I^{av+j}/I^{av+j+1}$, $0\leq j<a$. By Lemma~\ref{lem:additivity}, the top homogeneous parts of the corresponding $\mathfrak m$-cumulative Hilbert polynomials are additive. For fixed $i$ and $j$, one has $\sum_{v=0}^s(av+j)^{i-1}=\frac{a^{i-1}}{i}s^i+O(s^{i-1})$. Hence each successive quotient contributes $\frac{a^{i-1}c_i(I;R)}{(d-i)!i!}r^{d-i}s^i$ to the degree-$d$ part, and summing over the $a$ quotients gives $\frac{a^ic_i(I;R)}{(d-i)!i!}r^{d-i}s^i$. Therefore $c_i(I^a;R)=a^ic_i(I;R)$ for $1\leq i\leq d$. For $i=0$, Proposition~\ref{prop:classical} gives
$c_0(I;R)=\sum_{\mathfrak p\supseteq I,\ \dim(R/\mathfrak p)=d}
\lambda_{R_{\mathfrak p}}(R_{\mathfrak p})e(R/\mathfrak p)$.
Since a prime ideal contains $I^a$ if and only if it contains $I$, the
same sum computes $c_0(I^a;R)$. Hence
$c_0(I^a;R)=c_0(I;R)=a^0c_0(I;R)$..
\end{proof}




\begin{lemma}
\label{lem:positive-fixed-factor}
Let $J,K\subsetneq R$ be ideals of positive height. For every
$1\leq i\leq d$,
\begin{equation}
\label{eq:fixed-factor}
c_i(J^nK;R)
=
n^ic_i(J;R)+O(n^{i-1})
\qquad(n\longrightarrow\infty).
\end{equation}
In particular, \(c_i(J^nK;R)=O(n^i)\).
\end{lemma}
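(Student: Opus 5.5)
The plan is to realize every $J^nK$ inside one fixed multigraded object built from $J$ and $K$. Then $c_i(J^nK;R)$ becomes the value at $n$ of a polynomial of degree at most $i$, and it remains to identify the top coefficient with $c_i(J;R)$.

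\emph{Step 1: a single trigraded Hilbert function.} Consider the bigraded Rees algebra $S=R[Jt_1,Kt_2]=\bigoplus_{a,b}J^aK^bt_1^at_2^b$. Consider also the associated graded object along the diagonal direction, as in the diagonal multigraded construction of Callejas--Bedregal and Jorge P\'erez \cite{CBJP}. Concretely, define
\[
h(r,a,b)=\lambda_R\Bigl(\frac{J^aK^b}{\mathfrak m^{r+1}J^aK^b+J^{a+n}K^{b+1}}\Bigr).
\]
The trouble is that this still depends on $n$ through the second summand of the denominator. To avoid this, I will use the additivity Lemma~\ref{lem:additivity}, in the same way as in the proof of Proposition~\ref{prop:homogeneity}. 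The quotient $(J^nK)^v/(J^nK)^{v+1}=J^{nv}K^v/J^{nv+n}K^{v+1}$ is filtered by the successive quotients
\[
J^{nv+j}K^v/J^{nv+j+1}K^v\quad(0\le j<n),\qquad J^{nv+n}K^v/J^{nv+n}K^{v+1}.
\]
These are graded pieces of the two ``one-step'' associated graded modules $\bigoplus J^aK^b/J^{a+1}K^b$ and $\bigoplus J^aK^b/J^aK^{b+1}$ over the standard bigraded algebra $S/\mathfrak m S$-type object. Each of these two modules has an $\mathfrak m$-cumulative Hilbert function that is eventually a polynomial $P_1(r,a,b)$, respectively $P_2(r,a,b)$, of total degree $\le d-1$, independent of $n$.

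\emph{Step 2: summation.} By Lemma~\ref{lem:additivity}, the degree-$d$ part of $H_{J^nK}(r,s)$ equals the degree-$d$ part of
\[
\sum_{v=0}^{s}\Bigl(\sum_{j=0}^{n-1}P_1(r,nv+j,v)+P_2(r,nv+n,v)\Bigr).
\]
Expanding the monomials $r^{d-i}a^{\alpha}b^{\beta}$ with $\alpha+\beta=i-1$ and summing over $v$ and $j$, the coefficient of $r^{d-i}s^i$ is a polynomial in $n$ of degree at most $i$. The $n^i$ term arises only from $P_1$, namely from the monomials with $\beta=0$ through $\sum_j(nv+j)^{i-1}\approx n\cdot n^{i-1}v^{i-1}$. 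Hence $c_i(J^nK;R)=\gamma n^i+O(n^{i-1})$, where $\gamma$ is determined by the pure-$a$ part of $P_1$.

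\emph{Step 3: identifying $\gamma=c_i(J;R)$.} This is the main obstacle, because Artin--Rees uniformity in $n$ must be checked and the pure-$a$ part of $P_1$ must be compared with $J$ alone. I would run the same computation with $K=R$: $P_1$ then specializes to the Hilbert data of $\gr_J(R)$, and Proposition~\ref{prop:homogeneity} identifies the leading coefficient with $c_i(J;R)$. To transfer this to general $K$, I would show that the pure-$a$ top coefficient of $P_1$ is unchanged by multiplication by $K^b$ with $b$ fixed. The module $J^aK^b/J^{a+1}K^b$ is sandwiched, via $K^b J^a\subseteq J^a$ and the map $J^a/J^{a+1}\otimes K^b$, between modules whose $a$-growth in top degree agrees. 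Here positivity of the heights is used: since $\operatorname{ht}K>0$, $K^b$ contains a non-zero-divisor modulo the relevant top-dimensional components, so the kernel and cokernel have strictly smaller dimension and do not affect the degree-$(d-1)$ terms with $\beta=0$. Positivity of $\operatorname{ht}J$ guarantees that $i\ge1$ is the only range that matters, so $c_0$, which is not homogeneous, never enters. The consequence $c_i(J^nK;R)=O(n^i)$ is then immediate.
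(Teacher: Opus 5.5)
Your Steps 1 and 2 follow the paper's proof. Both use the algebra $R[JT_1,KT_2]$ and the one-step modules $U=\bigoplus J^aK^b/J^{a+1}K^b$ and $V=\bigoplus J^aK^b/J^aK^{b+1}$. Both filter $(J^nK)^s/(J^nK)^{s+1}$ by $n$ $J$-increments and one $K$-increment, and both observe that the $n^i$ term can only come from the monomial $r^{d-i}a^{i-1}$ of $P_1$.

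The ``Artin--Rees uniformity in $n$'' you flag is not an issue. For each fixed $n$, Lemma~\ref{lem:additivity} gives an exact equality of degree-$d$ parts, because the Artin--Rees shift only affects lower-order terms in $(r,s)$. Since $P_1,P_2$ do not depend on $n$, $c_i(J^nK;R)$ is exactly a polynomial in $n$ of degree at most $i$. Also, $\operatorname{ht}J>0$ is never used; $i\geq1$ is simply part of the statement.

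The gap is in Step 3, which is where the real content of the lemma lies: your comparison maps do not have small kernel and cokernel. The map $J^aK^b/J^{a+1}K^b\to J^a/J^{a+1}$ induced by $K^bJ^a\subseteq J^a$ is identically zero as soon as $K^b\subseteq J$. Its composite with the surjection $(J^a/J^{a+1})\otimes_RK^b\to J^aK^b/J^{a+1}K^b$ is then zero too. For instance, with $R=k[[x,y]]$ and $J=K=(x)$, the map $x^{a+1}R/x^{a+2}R\to x^aR/x^{a+1}R$ is zero, although $\gr_J(K)\cong\gr_J(R)$ up to a shift. This is exactly the case that matters, since later $\sqrt J=\sqrt K$ and one takes $b\gg0$. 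So there is no sandwich, and the kernel and cokernel are everything, not something of smaller dimension.

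The comparison has to be made between $R$-modules, not between graded pieces degree by degree. The paper does this as follows:
\begin{itemize}
\item It takes $b\gg0$, so that the trivariate polynomial governs the slice.
\item It observes that the $b$-slice of $U$ is $\gr_J(K^b)$, the associated graded module of the $R$-module $K^b$ for its $J$-adic filtration. Hence the coefficient of $r^{d-i}a^{i-1}$ in $P_1$ is $c_i(J;K^b)/((d-i)!\,(i-1)!)$.
\item It uses additivity of $c_i(J;-)$ along $0\to K^b\to R\to R/K^b\to0$, together with $\dim R/K^b<d$ (which is exactly what $\operatorname{ht}K>0$ provides), to conclude $c_i(J;K^b)=c_i(J;R)$.
\end{itemize}
That additivity rests on Artin--Rees: the induced filtration $\{J^a\cap K^b\}_a$ on $K^b$ is equivalent to $\{J^aK^b\}_a$, and induced filtrations give exact sequences of associated graded modules. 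Your non-zero-divisor remark points at the right reason, namely that $K^b$ is generically the unit ideal on the top-dimensional components. But it only becomes an argument at this module level, not through maps between graded pieces.
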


\begin{proof}
Set $A=R[JT_1,KT_2]$ and let the finite bigraded $A$-modules
$U=\bigoplus_{u,v\geq0}J^uK^v/J^{u+1}K^v$ and
$V=\bigoplus_{u,v\geq0}J^uK^v/J^uK^{v+1}$. Their
$\mathfrak m$-cumulative Hilbert functions have eventual polynomials
of total degree at most $d-1$; let $P_J(r,u,v)$ and $P_K(r,u,v)$
denote their homogeneous parts of degree $d-1$. The usual
Artin--Rees argument gives additivity of these top homogeneous parts
along finite filtrations.

Fix $n\geq1$. For each $s$, filter
$J^{ns}K^s/J^{n(s+1)}K^{s+1}$ first through the $n$ successive
$J$-increments and then through one $K$-increment. The successive
quotients are the components of $U$ in degrees $(ns+a,s)$,
$0\leq a<n$, and the component of $V$ in degree $(ns+n,s)$.
Hence the degree-$(d-1)$ part of the corresponding
$\mathfrak m$-cumulative Hilbert polynomial is obtained from
$\sum_{a=0}^{n-1}P_J(r,ns+a,s)+P_K(r,ns+n,s)$. Since translation
in the $u$-variable changes only terms of smaller total degree in
$(r,s)$, its degree-$(d-1)$ part is
\begin{equation}\label{eq:fixed-factor-polynomials}
nP_J(r,ns,s)+P_K(r,ns,s).
\end{equation}

The coefficient of $r^{d-i}s^{i-1}$ in
\eqref{eq:fixed-factor-polynomials} is a polynomial in $n$ of degree
at most $i$. In the first term, a monomial
$r^{d-i}u^qv^{i-1-q}$ contributes a multiple of $n^{q+1}$, while in
the second term it contributes a multiple of $n^q$. Consequently,
the coefficient of $n^i$ can only come from the monomial
$r^{d-i}u^{i-1}$ in $P_J$. To identify this coefficient, fix $v\gg0$. The $u$-graded slice of
$U$ is $\operatorname{gr}_J(K^v)$, so the coefficient of
$r^{d-i}u^{i-1}$ in $P_J$ is
$c_i(J;K^v)/((d-i)!(i-1)!)$. Since $\operatorname{ht}K>0$, one has
$\dim R/K^v<d$. Additivity of the multiplicity sequence in
$0\to K^v\to R\to R/K^v\to0$ therefore yields
$c_i(J;K^v)=c_i(J;R)$. Comparing the coefficient of
$r^{d-i}s^{i-1}$ gives
$c_i(J^nK;R)=n^ic_i(J;R)+O(n^{i-1})$. The final assertion follows immediately.
\end{proof}


\begin{lemma}
\label{lem:torsion-reduction}
Let $L\subsetneq R$ be an ideal, set
\(
H_L=0:_RL^\infty,
\,\,\,
\overline R=R/H_L,
\,\,\,
\overline L=L\overline R,
\)
and put $e=\dim\overline R$. For every $1\leq i\leq d$, one has
\[
c_i(L;R)
=
\begin{cases}
c_i(\overline L;\overline R),& e=d,\\
0,& e<d,
\end{cases}
\]
where we use the convention $\dim 0=-\infty$. In the first case,
\(
0:_{\overline R}\overline L^\infty=0,
\)
hence
\(
\operatorname{grade}_{\overline R}\overline L>0.
\)
\end{lemma}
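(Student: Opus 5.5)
The proof is a dévissage along the short exact sequence $0\to H_L\to R\to\overline R\to 0$, using the additivity of top homogeneous parts from Lemma~\ref{lem:additivity}. We pick $c$ with $L^cH_L=0$; such $c$ exists since $H_L$ is finitely generated and each generator is killed by a power of $L$. Let $A=R[Lt]$ and $N=\gr_L(R)=\bigoplus_v L^v/L^{v+1}$. We compare $N$ with $\overline N=\gr_{\overline L}(\overline R)$, which is a graded $A$-module through $A\to\overline R[\overline Lt]$. The natural surjection $N\to\overline N$ has degree-$v$ kernel
\[
(L^v\cap H_L+L^{v+1})/L^{v+1}.
\]
By Artin--Rees, $L^v\cap H_L\subseteq L^{v-c'}H_L$ for $v\gg0$, and $L^{v-c'}H_L=0$ once $v-c'\ge c$. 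So the kernel vanishes in all large degrees, and $N_v\cong\overline N_v$ for $v\gg0$.

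Next we pass to the cumulative functions. The double cumulative function is $H_L(r,s)=\sum_{v\le s}\lambda(N_v/\m^{r+1}N_v)$, and the $\m$-adic quotient commutes with the degreewise isomorphism, since $\m$ acts in degree $0$. Hence $H_L(r,s)$ and $H_{\overline L}(r,s)$ differ only by the finitely many low-degree terms $v\le v_0$. Each such term is a function of $r$ alone, eventually a polynomial of degree at most $d$ in $r$ with no $s$-dependence. Therefore the homogeneous parts of degree $d$ of the two polynomials agree in every monomial $r^{d-i}s^i$ with $i\ge1$.

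For $e=d$, the $\overline R$-length equals the $R$-length for $\overline R$-modules, so the polynomial of $H_{\overline L}$ is exactly the one defining $c_i(\overline L;\overline R)$, and its degree-$d$ part is normalized with the same $d$. Comparing the coefficients of $r^{d-i}s^i$ then gives $c_i(L;R)=c_i(\overline L;\overline R)$ for $i\ge1$.

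For $e<d$, the function $H_{\overline L}$ has total degree at most $e<d$, since its total degree is $\dim\overline R$ by the Achilles--Manaresi theory (the total degree is $\dim$ of the ring). Its degree-$d$ part is therefore zero, and $c_i(L;R)=0$ for $i\ge1$. This also covers $\overline R=0$, where the function is identically zero.

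For the last claim we show $0:_{\overline R}\overline L^\infty=0$. Suppose $x\in R$ satisfies $xL^m\subseteq H_L$. Then $xL^{m+c}\subseteq L^cH_L=0$, so $x\in H_L$, and $\bar x=0$. Because $\overline L$ then has no nonzero annihilated elements in $\overline R$, it is not contained in any associated prime of $\overline R$ (which is Noetherian, and $\overline R\ne0$ when $e=d$). Prime avoidance now gives a nonzerodivisor in $\overline L$, so $\operatorname{grade}\overline L>0$.

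The main obstacle is the degree comparison: making sure the finitely many discrepant graded pieces contribute nothing to the $s$-dependent coefficients. This holds because each discrepant piece is a single fixed degree, so it contributes only a polynomial in $r$. It also requires correctly identifying the $i=0$ exclusion, since that pure-$r$ coefficient is exactly the one allowed to differ.
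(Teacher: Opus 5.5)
Your proposal is correct and follows the paper's proof. It uses the same kernel $(L^v\cap H_L+L^{v+1})/L^{v+1}\cong F_v/F_{v+1}$ of $\gr_L(R)\to\gr_{\overline L}(\overline R)$, the same Artin--Rees argument that this kernel vanishes in high degrees, the same fact that the cumulative polynomial of $\gr_{\overline L}(\overline R)$ has total degree at most $e$, and the same prime-avoidance argument for the grade. The only minor difference is that you compare the two cumulative functions directly, via the isomorphisms $N_v\cong\overline N_v$ for $v\gg0$, instead of applying the additivity Lemma~\ref{lem:additivity} to the exact sequence; this spares you the Artin--Rees comparison of $\m$-adic filtrations built into that lemma.
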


\begin{proof}
Choose $N$ such that $H_L=0:_RL^N$ and set
$F_v=H_L\cap L^v$. By Artin--Rees, there exists $a$ such that
$F_v=L^{v-a}F_a$ for every $v\geq a$. Since $F_a\subseteq H_L$ and
$L^NH_L=0$, we have $F_v=0$ for every $v\geq a+N$. For each $v$, the natural map
$L^v/L^{v+1}\to(\overline L)^v/(\overline L)^{v+1}$ has kernel
$F_v/F_{v+1}$. Hence there is an exact sequence of finite graded
modules
\begin{equation}
\label{eq:graded-torsion-sequence}
0\longrightarrow
\bigoplus_{v\geq0}F_v/F_{v+1}
\longrightarrow
\operatorname{gr}_L(R)
\longrightarrow
\operatorname{gr}_{\overline L}(\overline R)
\longrightarrow0.
\end{equation}

The left-hand module vanishes in all sufficiently large graded
degrees. Its cumulative Hilbert polynomial is therefore eventually
independent of the $L$-degree variable, so its degree-$d$ homogeneous
part can contribute only to the component with index $0$. On the
other hand, the cumulative Hilbert polynomial of
$\operatorname{gr}_{\overline L}(\overline R)$ has total degree at
most $e=\dim\overline R$. If $e<d$, Lemma~\ref{lem:additivity}, applied in total degree $d$ to
\eqref{eq:graded-torsion-sequence}, shows that the degree-$d$ part of
the cumulative polynomial of $\operatorname{gr}_L(R)$ has no term
involving a positive power of the $L$-degree variable. Thus
$c_i(L;R)=0$ for every $i\geq1$. If $e=d$, the same lemma gives
$c_i(L;R)=c_i(\overline L;\overline R)$ for every $1\leq i\leq d$.

It remains to prove the last assertion. If
$\overline x\,\overline L^s=0$, then $xL^s\subseteq H_L$. Since
$L^NH_L=0$, it follows that $xL^{s+N}=0$, and hence
$x\in0:_RL^{s+N}=H_L$. Therefore $\overline x=0$, so
$0:_{\overline R}\overline L^\infty=0$. In particular,
$0:_{\overline R}\overline L=0$, and therefore $\overline L$ is not
contained in any prime of $\operatorname{Ass}(\overline R)$. Since
$\operatorname{Ass}(\overline R)$ is finite, prime avoidance yields
an $\overline R$-regular element in $\overline L$. Hence
$\operatorname{grade}_{\overline R}\overline L>0$.
\end{proof}

\begin{lemma}
\label{lem:fixed-factor}
Let $J,K\subsetneq R$ be ideals such that
\(
\sqrt J=\sqrt K.
\)
Then, for every $1\leq i\leq d$,
\begin{equation}
\label{eq:fixed-radical-estimate}
c_i(J^nK;R)
=
n^ic_i(J;R)+O(n^{i-1}).
\end{equation}
\end{lemma}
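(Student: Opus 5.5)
The plan is to reduce to Lemma~\ref{lem:positive-fixed-factor} by killing the $L$-torsion with Lemma~\ref{lem:torsion-reduction}. The difference from Lemma~\ref{lem:positive-fixed-factor} is that $J$ and $K$ may have height zero. The hypothesis $\sqrt J=\sqrt K$ makes the relevant torsion submodules coincide. Set $H=0:_RJ^\infty$. Since $\sqrt J=\sqrt K$, some power of each ideal lies in the other, so $0:_RK^\infty=H$. Moreover, $0:_R(J^nK)^\infty=H$ for every $n\geq1$, because $\sqrt{J^nK}=\sqrt J$. Thus all the ideals $J$, $K$ and $J^nK$ share one torsion ideal $H$ and one quotient $\overline R=R/H$, with $e=\dim\overline R$ independent of $n$.

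If $e<d$, Lemma~\ref{lem:torsion-reduction} gives $c_i(J^nK;R)=0=c_i(J;R)$ for all $i\geq1$, and the estimate holds trivially. So assume $e=d$. Then Lemma~\ref{lem:torsion-reduction} gives
\[
c_i(J^nK;R)=c_i(\overline J^{\,n}\,\overline K;\overline R)
\qquad\text{and}\qquad
c_i(J;R)=c_i(\overline J;\overline R),
\]
where $\dim\overline R=d$, so the indexing is unchanged. By the last assertion of that lemma, $\overline J$ contains an $\overline R$-regular element. Since $\sqrt{\overline K}=\sqrt{\overline J}$, the ideal $\overline K$ is not contained in any associated prime of $\overline R$ either, so it also has positive grade.

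The key step is to upgrade positive grade to positive height, so that Lemma~\ref{lem:positive-fixed-factor} applies to $\overline J,\overline K$ in $\overline R$. Positive grade means $\overline J$ avoids every associated prime of $\overline R$, and in particular every minimal prime. Hence $\operatorname{ht}\overline J>0$, and likewise $\operatorname{ht}\overline K>0$. We also need $\overline J,\overline K\neq\overline R$, which holds since $H\subseteq\m$ when $\overline R\neq0$. Lemma~\ref{lem:positive-fixed-factor} then yields
\[
c_i(\overline J^{\,n}\overline K;\overline R)=n^ic_i(\overline J;\overline R)+O(n^{i-1})
\]
for $1\leq i\leq d$. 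Transporting this back through the two identities above gives \eqref{eq:fixed-radical-estimate}.

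I expect the main obstacle to be the bookkeeping that the torsion ideal is the same for every $J^nK$. This is what the equal-radical hypothesis is needed for, since otherwise the quotient rings would vary with $n$. It is settled by noting that $0:_RL^\infty$ depends only on $\sqrt L$: if $L_1^a\subseteq L_2$, then $0:L_2^\infty\subseteq 0:L_1^\infty$.
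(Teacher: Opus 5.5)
Your proposal is correct and follows essentially the same route as the paper. Both arguments use the single saturation $H=0:_RJ^\infty=0:_RK^\infty=0:_R(J^nK)^\infty$ coming from the equal radicals, split into the cases $\dim R/H<d$ and $\dim R/H=d$ via Lemma~\ref{lem:torsion-reduction}, and then apply Lemma~\ref{lem:positive-fixed-factor} in $R/H$. Your explicit checks (positive grade of $\overline K$ from the radical, the passage from grade to height, and properness of $\overline J$ and $\overline K$) spell out details the paper compresses into ``positive grade, hence positive height.''
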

\begin{proof}
The saturation $0:_RL^\infty$ depends only on $\sqrt L$. Indeed, if
$\sqrt A=\sqrt B$, then $A^r\subseteq B$ and $B^s\subseteq A$ for
some $r,s\geq1$, and these inclusions give
$0:_RA^\infty=0:_RB^\infty$. Since
$\sqrt{J^nK}=\sqrt J=\sqrt K$, we may therefore set
\[
H:=0:_RJ^\infty=0:_RK^\infty=0:_R(J^nK)^\infty
\]
for every $n\geq1$, and put $S=R/H$. If $\dim S<d$, Lemma~\ref{lem:torsion-reduction} gives
$c_i(J;R)=c_i(J^nK;R)=0$ for every $i\geq1$, so the assertion is
immediate. Assume now that $\dim S=d$. Applying
Lemma~\ref{lem:torsion-reduction} to $J$, $K$, and $J^nK$, we obtain
that $JS$ and $KS$ have positive grade, hence positive height, and
that
$c_i(J;R)=c_i(JS;S)$ and
$c_i(J^nK;R)=c_i((JS)^nKS;S)$. Now, Lemma~\ref{lem:positive-fixed-factor}, applied in $S$, now gives
$c_i((JS)^nKS;S)=n^ic_i(JS;S)+O(n^{i-1})$. Therefore
$c_i(J^nK;R)=n^ic_i(J;R)+O(n^{i-1})$, as required.
\end{proof}


\section{Asymptotic invariants: definitions, finiteness, and existence}
\label{sec:definition}

In this section, we define the asymptotic multiplicity-sequence components associated
with a graded family. We first
record their basic formal properties and then study finiteness and
existence. Examples are placed next to the results they illustrate.

\begin{definition}
\label{def:family}
A \emph{graded family} on $R$ is a sequence
\(
\mathcal I=\{I_n\}_{n\geq0}
\)
such that
\(
I_0=R
\,\,\,\text{and}\,\,\,
I_mI_n\subseteq I_{m+n}
\)
for all $m,n\geq0$. Throughout this paper, we require
$I_n\subsetneq R$ for $n\geq1$. The family is a \emph{graded filtration} if $I_{n+1}\subseteq I_n$  for $n\geq 1$.
It is called \emph{Noetherian} if its family algebra
\(
\mathscr R(\mathcal I)
=
\bigoplus_{n\geq0}I_nt^n
\subseteq R[t]
\)
is finitely generated over $R$.
\end{definition}

For a family that is not decreasing, the quotients $I_n/I_{n+1}$
do not necessarily form an associated graded algebra. The following
definitions avoid this difficulty by applying the classical
invariants separately to each ideal $I_n$.

\begin{definition}
\label{def:asymptotic}
Let $\mathcal I$ be a graded family. For every $0\leq i\leq d$, set
\begin{equation}
\label{eq:asymptotic}
\lowc_i(\mathcal I;R)
=
\liminf_{n\to\infty}
\frac{c_i(I_n;R)}{n^i},
\qquad
\upc_i(\mathcal I;R)
=
\limsup_{n\to\infty}
\frac{c_i(I_n;R)}{n^i}.
\end{equation}
These numbers belong to $[0,+\infty]$. If
\(
\lowc_i(\mathcal I;R)=\upc_i(\mathcal I;R),
\)
their common value is denoted by
\(
c_i(\mathcal I;R).
\)
If every component exists and is finite, then
\[
\cvec(\mathcal I;R)
=
\bigl(
c_0(\mathcal I;R),\ldots,c_d(\mathcal I;R)
\bigr)
\]
is called the \emph{finite asymptotic multiplicity sequence}. If every component exists in $[0,+\infty]$, allowing infinite
entries, it is called the \emph{extended asymptotic multiplicity
sequence}.
\end{definition}

No global limit is presupposed by these definitions. In fact, the
lower and upper values could be formed for an arbitrary sequence of
ideals. The graded condition is used to obtain the structural and
existence results below.


\begin{proposition}\label{prop:compatibility}
\begin{itemize}
    \item[(i)] For the adic family $\mathcal{I}=\{I^n\}_{n\geq0}$, $c_i(\mathcal{I};R)=c_i(I;R)$ for $0\leq i\leq d$.
    \item[(ii)] If $\mathcal{I}$ is $\mathfrak{m}$-primary, then $\cvec(\mathcal{I};R) = (0,\ldots,0,e(\mathcal{I};R))$, where $e(\mathcal{I};R) = \lim_{n\to\infty}\frac{e(I_n;R)}{n^d} < \infty$.

\end{itemize}
\end{proposition}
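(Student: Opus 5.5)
For (i) I will use Proposition~\ref{prop:homogeneity} directly. Take $I_n=I^n$ with $n\geq1$. Then $c_i(I_n;R)/n^i=c_i(I^n;R)/n^i=c_i(I;R)$ for every $0\leq i\leq d$, so the normalized sequence is constant. Hence $\lowc_i(\mathcal I;R)=\upc_i(\mathcal I;R)=c_i(I;R)$, and this common value is a finite nonnegative integer. For $i=0$ the same conclusion also follows from Proposition~\ref{prop:classical}(ii), because $V(I^n)=V(I)$.

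For (ii) assume every $I_n$ with $n\geq1$ is $\m$-primary. Proposition~\ref{prop:classical}(iv) gives $\cvec(I_n;R)=(0,\ldots,0,e(I_n;R))$ for each $n\geq1$.
\begin{itemize}
\item For $i<d$ the normalized terms $c_i(I_n;R)/n^i$ vanish identically, so $c_i(\mathcal I;R)=0$.
\item For $i=d$ the normalized term is $e(I_n;R)/n^d$, whose limit exists by Cutkosky's theorem \cite[Theorem~1.3]{Cut}. That theorem is stated for graded families of $\m$-primary ideals in an arbitrary Noetherian local ring, so its hypotheses match ours.
\end{itemize}

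It remains to show the limit is finite. By Cutkosky's result the limit equals $\lim_n \lambda_R(R/I_n)/(n^d/d!)$. Since $I_1$ is $\m$-primary, we have $I_1^n\subseteq I_n$. This gives $e(I_n;R)\leq e(I_1^n;R)=n^de(I_1;R)$, using that multiplicity is monotone under inclusion of $\m$-primary ideals. Equivalently, $\lambda_R(R/I_n)\leq\lambda_R(R/I_1^n)$. Either way the limit is at most $e(I_1;R)<\infty$.

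The only point needing care is matching normalizations. Our $c_d(\mathcal I;R)$ is defined as $\lim e(I_n;R)/n^d$, while Cutkosky's invariant may be defined through colengths. Both forms appear in \cite[Theorem~1.3]{Cut}, so the identification is immediate. Apart from that, the argument is a direct combination of Proposition~\ref{prop:classical}, Proposition~\ref{prop:homogeneity} and Cutkosky's existence theorem.
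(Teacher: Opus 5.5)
Your proposal is correct and follows the paper's proof essentially verbatim: homogeneity (Proposition~\ref{prop:homogeneity}) for (i), and for (ii) Proposition~\ref{prop:classical}(iv), Cutkosky's existence theorem for $\lim e(I_n;R)/n^d$, and the bound $e(I_n;R)\leq e(I_1^n;R)=n^de(I_1;R)$ coming from $I_1^n\subseteq I_n$ (which holds by the graded-family condition, not because $I_1$ is $\m$-primary). The one thing to drop is the aside identifying the limit with $\lim_n\lambda_R(R/I_n)/(n^d/d!)$, since over an arbitrary Noetherian local ring that colength limit is only guaranteed to exist when $\dim N(\widehat R)<d$ (with $N(\widehat R)$ the nilradical of the completion); your multiplicity bound already gives finiteness without it.
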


\begin{proof}
For (i), Proposition~\ref{prop:homogeneity} gives
$c_i(I^n;R)=n^ic_i(I;R)$ for every $n\geq1$. Hence
$c_i(\mathcal I;R)=c_i(I;R)$ for all $0\leq i\leq d$.

For (ii), since each $I_n$ is $\mathfrak m$-primary,
Proposition~\ref{prop:classical} gives
$c_i(I_n;R)=0$ for $i<d$ and
$c_d(I_n;R)=e(I_n;R)$. Therefore all asymptotic components below the
top one are zero. By \cite[Theorem~1.3]{Cut}, the limit
$e(\mathcal I;R)=\lim_{n\to\infty}e(I_n;R)/n^d$ exists. Moreover,
$I_1^n\subseteq I_n$ for every $n\geq1$, so monotonicity and
homogeneity of Hilbert--Samuel multiplicity give
$e(I_n;R)\leq e(I_1^n;R)=n^de(I_1;R)$. Hence
$e(\mathcal I;R)\leq e(I_1;R)<\infty$, and consequently
$\cvec(\mathcal I;R)=(0,\ldots,0,e(\mathcal I;R))$.
\end{proof}

\begin{example}\label{ex:reparametrized}
Let $J\subsetneq R$ and let $(a_n)_{n\geq1}$ be positive integers
satisfying $a_{m+n}\leq a_m+a_n$. Then $I_0=R$ and $I_n=J^{a_n}$
define a graded family. It is a filtration if $(a_n)$ is
nondecreasing. Fekete's lemma gives
$\alpha=\lim a_n/n=\inf a_n/n\in[0,a_1]$. By homogeneity,
\[
 c_0(\I;R)=c_0(J;R),\qquad
 c_i(\I;R)=\alpha^i c_i(J;R)\quad(1\leq i\leq d).
\]
For instance:
\begin{enumerate}[label=\textup{(\alph*)}]
\item If $\alpha>0$, then $a_n=\lceil\alpha n\rceil$ is
nondecreasing and subadditive. In $k[[x]]$ the resulting filtration
$(x^{\lceil\alpha n\rceil})$ has sequence $(0,\alpha)$, allowing
irrational components.
\item The choice $a_n=\lceil\sqrt n\rceil$ is also nondecreasing and
subadditive, but $a_n/n\to0$. Thus all positive components can vanish
for a filtration of nonzero proper ideals.
\item More generally, let $\K=\{K_n\}$ be a graded family in an
equidimensional universally catenary ring. If
$\overline{K_n}=\overline{J^{a_n}}$ for $n\gg0$ and
$a_n/n\to\alpha<\infty$, integral-closure invariance gives the same
formulas for $\K$; subadditivity of $(a_n)$ is unnecessary for this
last comparison.
\end{enumerate}
\end{example}


\begin{proposition}
\label{prop:radicals}
Let $\mathcal I$ be a graded family.
\begin{enumerate}[label=\textup{(\roman*)}]
\item For every $n\geq1$, \(I_1^n\subseteq I_n.\)
In addition, $\sqrt{I_1}\subseteq\sqrt{I_n}$ and  $V(I_n)\subseteq V(I_1).$

\item If \(h_0=d-\dim(R/I_1),\)
then $c_i(I_n;R)=0$  for $i<h_0$ and
for every $n\geq1$.

\item If $\ell(I_n)\leq s$, with $n\gg0$, 
then $\lowc_i(\mathcal I;R)
=
\upc_i(\mathcal I;R)
=
0$ for all $i>s.$

\item If $\mathcal I$ is a filtration, then $\sqrt{I_n}=\sqrt{I_1}$  for $n\geq1$,
and \(c_0(I_n;R)=c_0(I_1;R).\)
\end{enumerate}
\end{proposition}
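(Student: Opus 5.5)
The four parts are largely formal consequences of the graded-family axioms combined with Proposition~\ref{prop:classical}, so I will take them in order, each feeding the next.

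For (i), I argue by induction on $n$ using $I_mI_n\subseteq I_{m+n}$. The case $n=1$ is trivial. If $I_1^{n}\subseteq I_n$, then $I_1^{n+1}=I_1\cdot I_1^n\subseteq I_1I_n\subseteq I_{n+1}$. Taking radicals gives $\sqrt{I_1}=\sqrt{I_1^n}\subseteq\sqrt{I_n}$. Passing to zero loci, a prime containing $I_n$ contains $I_1^n$, hence contains $I_1$; this gives $V(I_n)\subseteq V(I_1)$.

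For (ii), I use $V(I_n)\subseteq V(I_1)$ from (i) to get $\dim(R/I_n)\le\dim(R/I_1)$. Hence $d-\dim(R/I_n)\ge h_0$. By Proposition~\ref{prop:classical}(i), $c_i(I_n;R)=0$ whenever $i<d-\dim(R/I_n)$, and in particular for every $i<h_0$. For (iii), the same Proposition~\ref{prop:classical}(i) gives $c_i(I_n;R)=0$ for $i>\ell(I_n)$. So if $\ell(I_n)\le s$ for all $n\gg0$, then for each $i>s$ the sequence $c_i(I_n;R)/n^i$ is eventually zero. Its $\liminf$ and $\limsup$ therefore both vanish.

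For (iv), the filtration condition gives $I_n\subseteq I_1$, so $\sqrt{I_n}\subseteq\sqrt{I_1}$. Combined with (i), this yields $\sqrt{I_n}=\sqrt{I_1}$. The formula of Proposition~\ref{prop:classical}(ii) for $c_0$ depends only on the set of primes $\mathfrak p\supseteq I$ with $\dim R/\mathfrak p=d$, and this set depends only on $\sqrt I$. Hence $c_0(I_n;R)=c_0(I_1;R)$.

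I do not expect a genuine obstacle. The only point needing care is (iii): I read the hypothesis as holding for all sufficiently large $n$ and use only eventual vanishing. I also note that the convention $\dim 0=-\infty$ never arises, since the ideals are proper.
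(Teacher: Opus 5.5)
Your proof is correct and follows the paper's argument essentially step for step. Both use induction for $I_1^n\subseteq I_n$, Proposition~\ref{prop:classical}(i) for parts (ii) and (iii), and the sandwich $I_1^n\subseteq I_n\subseteq I_1$ together with Proposition~\ref{prop:classical}(ii) for part (iv), with the hypothesis of (iii) read as holding for all $n\gg0$.
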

\begin{proof}
For (i), the graded condition gives $I_1^n\subseteq I_n$ by induction.
Hence $\sqrt{I_1}\subseteq\sqrt{I_n}$, and therefore
$V(I_n)\subseteq V(I_1)$.

Since $I_1^n\subseteq I_n$, one has
$\dim(R/I_n)\leq\dim(R/I_1)$. Thus
$d-\dim(R/I_n)\geq h_0$, and Proposition~\ref{prop:classical}\textup{(i)}
gives $c_i(I_n;R)=0$ for every $i<h_0$, proving (ii).

For (iii), if $\ell(I_n)\leq s$ for all $n\gg0$, then
Proposition~\ref{prop:classical}\textup{(i)} gives
$c_i(I_n;R)=0$ for every $i>s$ and all sufficiently large $n$.
Hence $\lowc_i(\mathcal I;R)=\upc_i(\mathcal I;R)=0$ for $i>s$.

For (iv), if $\mathcal I$ is a filtration, then
$I_1^n\subseteq I_n\subseteq I_1$ for every $n\geq1$. Taking radicals
gives $\sqrt{I_n}=\sqrt{I_1}$. Therefore the sets of
top-dimensional primes containing $I_n$ and $I_1$ coincide, and
Proposition~\ref{prop:classical}\textup{(ii)} yields
$c_0(I_n;R)=c_0(I_1;R)$.
\end{proof}

\begin{proposition}
\label{prop:operations}
Fix an integer $q\geq1$. The families
$\mathcal I^{[q]}=\{I_n^q\}_{n\geq0}$ and
$\mathcal I^{(q)}=\{I_{qn}\}_{n\geq0}$ are graded, and they are
filtrations whenever $\mathcal I$ is a filtration. Moreover, for every
$0\leq i\leq d$,
\[
\lowc_i(\mathcal I^{[q]};R)=q^i\lowc_i(\mathcal I;R),
\qquad
\upc_i(\mathcal I^{[q]};R)=q^i\upc_i(\mathcal I;R).
\]
For the Veronese reindexing,
\begin{equation}
\label{eq:veronese-bounds}
q^i\lowc_i(\mathcal I;R)
\leq
\lowc_i(\mathcal I^{(q)};R)
\leq
\upc_i(\mathcal I^{(q)};R)
\leq
q^i\upc_i(\mathcal I;R).
\end{equation}
If $c_i(\mathcal I;R)$ exists, then
$c_i(\mathcal I^{(q)};R)=q^ic_i(\mathcal I;R)$.
\end{proposition}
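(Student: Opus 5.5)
First I will check that both families are graded. For $\mathcal I^{[q]}$ we have $I_m^qI_n^q=(I_mI_n)^q\subseteq I_{m+n}^q$ and $I_0^q=R$. For $\mathcal I^{(q)}$ we have $I_{qm}I_{qn}\subseteq I_{q(m+n)}$. Properness of the terms for $n\geq1$ is inherited in both cases. If $\mathcal I$ is a filtration, then $I_{n+1}\subseteq I_n$ gives $I_{n+1}^q\subseteq I_n^q$, and iterating gives $I_{q(n+1)}\subseteq I_{qn}$. None of this needs more than the definitions.

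For the power family I will apply Proposition~\ref{prop:homogeneity} termwise. For $n\geq1$ it gives $c_i(I_n^q;R)=q^ic_i(I_n;R)$ for every $0\leq i\leq d$, including $i=0$, where both sides equal $c_0(I_n;R)$. Dividing by $n^i$ and taking $\liminf$ and $\limsup$ yields the two identities. Multiplying by the positive constant $q^i$ commutes with both limits, also when the value is $+\infty$, using the convention $q^i\cdot(+\infty)=+\infty$.

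For the Veronese family I will rewrite
\[
\frac{c_i(I_{qn};R)}{n^i}=q^i\,\frac{c_i(I_{qn};R)}{(qn)^i}.
\]
The sequence $\bigl(c_i(I_{qn};R)/(qn)^i\bigr)_n$ is a subsequence of $\bigl(c_i(I_m;R)/m^i\bigr)_m$. The $\liminf$ of a subsequence is at least the $\liminf$ of the full sequence, and its $\limsup$ is at most the $\limsup$ of the full sequence. Multiplying through by $q^i$ gives \eqref{eq:veronese-bounds}, with the middle inequality trivial. If $c_i(\mathcal I;R)$ exists, the outer terms of \eqref{eq:veronese-bounds} coincide, which forces $c_i(\mathcal I^{(q)};R)=q^ic_i(\mathcal I;R)$. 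This holds in $[0,+\infty]$.

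I do not expect a real obstacle. The only points needing care are that Proposition~\ref{prop:homogeneity} covers the index $i=0$, which it does, and that the arithmetic is done in the extended range $[0,+\infty]$.
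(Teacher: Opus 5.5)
Your proposal is correct and follows the paper's proof essentially step for step. You apply Proposition~\ref{prop:homogeneity} termwise (including $i=0$) for $\mathcal I^{[q]}$, and for $\mathcal I^{(q)}$ you use the subsequence comparison of $\liminf$ and $\limsup$ after rewriting $c_i(I_{qn};R)/n^i=q^i\,c_i(I_{qn};R)/(qn)^i$. Your explicit checks of properness and of the arithmetic in $[0,+\infty]$ only spell out what the paper leaves implicit.
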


\begin{proof}
The graded and decreasing conditions follow directly from those of
$\mathcal I$. For termwise powers, Proposition~\ref{prop:homogeneity}
gives
$c_i(I_n^q;R)/n^i=q^ic_i(I_n;R)/n^i$, and taking lower and upper
limits yields the asserted equalities. For the Veronese family,
$c_i(I_{qn};R)/n^i=q^i\,c_i(I_{qn};R)/(qn)^i$. Since
$\{c_i(I_{qn};R)/(qn)^i\}_{n\geq1}$ is a subsequence of
$\{c_i(I_n;R)/n^i\}_{n\geq1}$, the standard inequalities for
$\liminf$ and $\limsup$ give \eqref{eq:veronese-bounds}. If the latter
sequence converges, every subsequence has the same limit, and therefore
$c_i(\mathcal I^{(q)};R)=q^ic_i(\mathcal I;R)$.
\end{proof}


We first determine the asymptotic behavior of the initial components, without assuming the existence of the entire asymptotic multiplicity sequence. The zeroth component is governed by the top-dimensional components of $R$, while the first possible positive component will follow from the classical associativity formula (see \cite{AM97,PTUV}). No finite-generation hypothesis is needed. Write $\operatorname{Assh}(R)=\{\mathfrak p\in\operatorname{Spec}R:\dim R/\mathfrak p=d\}$, the finite set of top-dimensional minimal primes of $R$.

\begin{proposition}
\label{prop:zeroth-component}
Let \((R,\mathfrak m)\) be a \(d\)-dimensional Noetherian local ring
and let \(\mathcal I=\{I_n\}_{n\geq0}\) be a graded family of proper
ideals. For each \(\mathfrak p\in\operatorname{Assh}(R)\), set
\(
\Gamma_{\mathfrak p}(\mathcal I)
=
\{n\geq1:I_n\nsubseteq\mathfrak p\}.
\)
Then \(c_0(\mathcal I;R)\) exists if and only if
\(
\gcd\Gamma_{\mathfrak p}(\mathcal I)=1
\)
for every \(\mathfrak p\in\operatorname{Assh}(R)\) such that
\(\Gamma_{\mathfrak p}(\mathcal I)\neq\varnothing\).
\end{proposition}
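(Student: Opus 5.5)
The plan is to reduce everything to the explicit formula in Proposition~\ref{prop:classical}(ii). Since the normalization for $i=0$ is $n^0=1$, $c_0(\mathcal I;R)$ exists exactly when the integer sequence $c_0(I_n;R)$ converges, that is, when it is eventually constant. Proposition~\ref{prop:classical}(ii) gives
\[
c_0(I_n;R)=\sum_{\mathfrak p\in\operatorname{Assh}(R)} w_{\mathfrak p}\,\chi_{\mathfrak p}(n),
\qquad w_{\mathfrak p}=\lambda_{R_{\mathfrak p}}(R_{\mathfrak p})e(R/\mathfrak p)\geq1,
\]
where $\chi_{\mathfrak p}(n)=1$ if $I_n\subseteq\mathfrak p$ and $0$ otherwise, i.e.\ $\chi_{\mathfrak p}(n)=1$ iff $n\notin\Gamma_{\mathfrak p}(\mathcal I)$. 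So the whole question is about the eventual behaviour of the finitely many indicator functions $\chi_{\mathfrak p}$, combined with positive integer weights.

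The first key step is that each $\Gamma_{\mathfrak p}=\Gamma_{\mathfrak p}(\mathcal I)$ is an additive subsemigroup of $\mathbb Z_{\geq1}$. If $I_m\nsubseteq\mathfrak p$ and $I_n\nsubseteq\mathfrak p$, pick $a\in I_m\setminus\mathfrak p$ and $b\in I_n\setminus\mathfrak p$. Then $ab\notin\mathfrak p$ because $\mathfrak p$ is prime, and $ab\in I_mI_n\subseteq I_{m+n}$. Hence $m+n\in\Gamma_{\mathfrak p}$.

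Next I would invoke the standard fact on numerical semigroups. If $\Gamma_{\mathfrak p}\neq\varnothing$ and $g_{\mathfrak p}=\gcd\Gamma_{\mathfrak p}$, then $\Gamma_{\mathfrak p}\subseteq g_{\mathfrak p}\mathbb Z$, and $\Gamma_{\mathfrak p}$ contains all sufficiently large multiples of $g_{\mathfrak p}$. Consequently each $\chi_{\mathfrak p}$ falls into one of three cases:
\begin{enumerate}[label=\textup{(\alph*)}]
\item it is identically $1$ when $\Gamma_{\mathfrak p}=\varnothing$;
\item it is eventually $0$ when $g_{\mathfrak p}=1$;
\item it is eventually periodic when $g_{\mathfrak p}>1$, equal to $0$ on large multiples of $g_{\mathfrak p}$ and to $1$ off $g_{\mathfrak p}\mathbb Z$.
\end{enumerate}
In particular $c_0(I_n;R)$ is eventually periodic. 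Sufficiency is then immediate: if every nonempty $\Gamma_{\mathfrak p}$ has gcd $1$, then $c_0(I_n;R)$ is eventually equal to $\sum_{\Gamma_{\mathfrak p}=\varnothing}w_{\mathfrak p}$.

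For necessity, which is the only place where some care is needed because different primes could a priori oscillate in compensating ways, I would use positivity of the weights. Let $L$ be the lcm of all the $g_{\mathfrak p}$ with $\Gamma_{\mathfrak p}\neq\varnothing$. For large $n\equiv0\pmod L$, every nonempty $\Gamma_{\mathfrak p}$ contains $n$, so $c_0(I_n;R)=m_0:=\sum_{\Gamma_{\mathfrak p}=\varnothing}w_{\mathfrak p}$, which is the minimum possible value. Now suppose some $\mathfrak p$ has $g_{\mathfrak p}>1$, and take $n\equiv1\pmod L$. Then $n\notin g_{\mathfrak p}\mathbb Z$, so $\chi_{\mathfrak p}(n)=1$ and $c_0(I_n;R)\geq m_0+w_{\mathfrak p}>m_0$. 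Thus $c_0(I_n;R)$ takes two distinct values infinitely often and cannot converge, which finishes the equivalence.
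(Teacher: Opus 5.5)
Your proposal is correct and follows essentially the same route as the paper. Both arguments use the associativity formula for $c_0(I_n;R)$, the semigroup property of $\Gamma_{\mathfrak p}(\mathcal I)$, and the structure of subsemigroups of $\mathbb Z_{>0}$. For necessity, both compare the indices $n\equiv 0$ and $n\equiv 1$ modulo the lcm of the gcds, using positivity of the weights $\lambda_{R_{\mathfrak p}}(R_{\mathfrak p})e(R/\mathfrak p)$. The only cosmetic difference is that you bound $c_0(I_n;R)$ from below at $n\equiv1$, whereas the paper computes the exact difference of the two values.
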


\begin{proof}
By Proposition~\ref{prop:classical} \textup{(ii)}, for every $n\geq1$,
\begin{equation}
\label{eq:c0-associativity}
c_0(I_n;R)
=
\sum_{\substack{\mathfrak p\in\operatorname{Assh}(R)\\
I_n\subseteq\mathfrak p}}
\lambda_{R_{\mathfrak p}}(R_{\mathfrak p})e(R/\mathfrak p).
\end{equation}
Fix $\mathfrak p\in\operatorname{Assh}(R)$. If
$m,n\in\Gamma_{\mathfrak p}(\mathcal I)$, choose
$a\in I_m\setminus\mathfrak p$ and $b\in I_n\setminus\mathfrak p$.
Since $I_mI_n\subseteq I_{m+n}$ and $\mathfrak p$ is prime,
$ab\in I_{m+n}\setminus\mathfrak p$. Thus
$\Gamma_{\mathfrak p}(\mathcal I)$ is an additive subsemigroup of
$\mathbb Z_{>0}$. If it is nonempty and
$g_{\mathfrak p}=\gcd\Gamma_{\mathfrak p}(\mathcal I)$, then the
standard structure of additive subsemigroups of $\mathbb Z_{>0}$
shows that $\Gamma_{\mathfrak p}(\mathcal I)$ contains every
sufficiently large multiple of $g_{\mathfrak p}$ and no integer not
divisible by $g_{\mathfrak p}$. Hence, for $n\gg0$,
$I_n\nsubseteq\mathfrak p$ if and only if
$g_{\mathfrak p}\mid n$.

Suppose first that every nonempty
$\Gamma_{\mathfrak p}(\mathcal I)$ has gcd $1$. Then, for each
$\mathfrak p\in\operatorname{Assh}(R)$, either
$\Gamma_{\mathfrak p}(\mathcal I)=\varnothing$, in which case
$I_n\subseteq\mathfrak p$ for every $n$, or
$I_n\nsubseteq\mathfrak p$ for all sufficiently large $n$.
Since $\operatorname{Assh}(R)$ is finite,
\eqref{eq:c0-associativity} is eventually constant. Thus
$c_0(\mathcal I;R)$ exists.

Conversely, let $\Sigma$ be the nonempty set of primes
$\mathfrak p\in\operatorname{Assh}(R)$ for which
$g_{\mathfrak p}>1$, and let $N$ be the least common multiple of
these integers. For all sufficiently large $t$, every
$\mathfrak p\in\Sigma$ satisfies $tN\in\Gamma_{\mathfrak p}(\mathcal I)$
and $tN+1\notin\Gamma_{\mathfrak p}(\mathcal I)$. Primes with gcd $1$
do not contribute to \eqref{eq:c0-associativity} for either index once
$t$ is large, while primes with empty semigroup contribute to both.
Therefore, for $t\gg0$,
\[
c_0(I_{tN+1};R)-c_0(I_{tN};R)
=
\sum_{\mathfrak p\in\Sigma}
\lambda_{R_{\mathfrak p}}(R_{\mathfrak p})e(R/\mathfrak p)>0.
\]
Hence $c_0(I_n;R)$ cannot converge.
\end{proof}

\begin{remark}
Note that Formula~\eqref{eq:c0-associativity} and the proof above show that
$c_0(I_n;R)$ is eventually periodic. More precisely, an eventual
period is given by the least common multiple of the nontrivial integers
$g_{\mathfrak p}=\gcd\Gamma_{\mathfrak p}(\mathcal I)$. Moreover,
$0\leq c_0(I_n;R)\leq e(R)$ for every $n$. If $R$ is a domain and
$I_1\neq0$, then $I_1^n\subseteq I_n$ implies $I_n\neq0$ for every
$n\geq1$. Since $\operatorname{Assh}(R)=\{(0)\}$, formula
\eqref{eq:c0-associativity} gives $c_0(I_n;R)=0$ for every $n$, even
without assuming that $\mathcal I$ is a filtration.
\end{remark}

\begin{example}
\label{ex:periodic-zero}
Let $R=k[[x]]$, set $I_0=R$, and define
$I_{2q}=(x^{2q})$ for $q\geq1$ and $I_{2q+1}=0$ for $q\geq0$.
Then $\mathcal I=\{I_n\}_{n\geq0}$ is a graded family and
$\mathscr R(\mathcal I)=R[x^2t^2]$, so in particular it is
Noetherian. It is not a filtration, since $I_2\nsubseteq I_1$. Since $R$ is a one-dimensional domain,
$\operatorname{Assh}(R)=\{(0)\}$. For every positive even $n$,
$I_n\neq0$, and hence Proposition~\ref{prop:classical}\textup{(ii)}
gives $c_0(I_n;R)=0$. For odd $n$, one has $I_n=0$, and the same
formula gives $c_0(I_n;R)=e(R)=1$. Therefore
$\lowc_0(\mathcal I;R)=0$ and $\upc_0(\mathcal I;R)=1$.
Thus finite generation of the family algebra alone does not guarantee
convergence of the zeroth asymptotic component.
\end{example}

The next result shows the first positive component that is not forced to
vanish. The argument reduces it, through the associativity formula, to
Hilbert--Samuel multiplicities of localized graded primary families.

\begin{theorem}
\label{thm:height}
Let \(R\) be an equidimensional catenary Noetherian local ring, and
let \(\mathcal I=\{I_n\}_{n\geq0}\) be a graded family of proper
ideals. If
\(
h:=\operatorname{ht}I_1>0,
\)
then
\(
c_i(\mathcal I;R)=0\) for 
$i<h$,
and \(c_h(\mathcal I;R)\) exists and is finite. Moreover,
\(
0\leq c_h(\mathcal I;R)\leq c_h(I_1;R).
\)
\end{theorem}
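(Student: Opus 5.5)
Since $I_1^n\subseteq I_n$ by Proposition~\ref{prop:radicals}(i), every prime containing $I_n$ contains $I_1$. Hence $\operatorname{ht}I_n\geq h$ for all $n\geq1$. Because $R$ is equidimensional and catenary, $\dim R/I_n=d-\operatorname{ht}I_n\leq d-h$. Proposition~\ref{prop:classical}(i) then gives $c_i(I_n;R)=0$ for $i<h$ and all $n$, so these asymptotic components exist and vanish.

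For the component $c_h$ we use the associativity formula of \cite[Proposition~2.4(a)]{PTUV}, applied to each $I_n$. That formula is stated for $\operatorname{ht}I=h$, but the same computation shows $c_h(I_n;R)=0$ when $\operatorname{ht}I_n>h$. Let $\Lambda=\{\mathfrak p\in\Min(R/I_1):\operatorname{ht}\mathfrak p=h\}$, a finite set. Any height-$h$ minimal prime of $I_n$ contains $I_1$, and since $\operatorname{ht}I_1=h$ it is a minimal prime of $I_1$, so it lies in $\Lambda$. Conversely, if $\mathfrak p\in\Lambda$ and $I_n\subseteq\mathfrak p$, then $\mathfrak p$ is minimal over $I_n$, because any prime between $I_n$ and $\mathfrak p$ contains $I_1$. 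We therefore get, uniformly in $n$,
\[
c_h(I_n;R)=\sum_{\mathfrak p\in\Lambda}\varepsilon_n(\mathfrak p)\,e(I_nR_{\mathfrak p};R_{\mathfrak p})\,e(R/\mathfrak p),
\]
where $\varepsilon_n(\mathfrak p)=1$ if $I_n\subseteq\mathfrak p$ and $0$ otherwise. Equivalently, we may set $e(R_{\mathfrak p};R_{\mathfrak p})$-terms to $0$ by declaring the multiplicity of the unit ideal to be zero.

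Fix $\mathfrak p\in\Lambda$. The localized family $\{I_nR_{\mathfrak p}\}_n$ is a graded family in the $h$-dimensional local ring $R_{\mathfrak p}$, and each member is either $\mathfrak pR_{\mathfrak p}$-primary or the unit ideal. This dichotomy is the main obstacle, since Cutkosky's theorem \cite[Theorem~1.3]{Cut} needs proper primary ideals. The set $\{n:I_n\nsubseteq\mathfrak p\}$ is an additive semigroup, as in the proof of Proposition~\ref{prop:zeroth-component}.

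\emph{Case 1: the semigroup is empty.} Every $I_nR_{\mathfrak p}$ is $\mathfrak pR_{\mathfrak p}$-primary, and Cutkosky's theorem gives the existence of $\lim_n e(I_nR_{\mathfrak p};R_{\mathfrak p})/n^h$.

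\emph{Case 2: the semigroup is nonempty.} Then $I_m\nsubseteq\mathfrak p$ for some $m$, so $I_mR_{\mathfrak p}=R_{\mathfrak p}$. For any $n$ we have $I_n\supseteq I_mI_{n-m}\supseteq\cdots$, and also $I_{km}R_{\mathfrak p}=R_{\mathfrak p}$ for all $k$. Hence $I_nR_{\mathfrak p}\supseteq I_{n\bmod m}R_{\mathfrak p}\supseteq I_1^{m}R_{\mathfrak p}$ for all large $n$. Since $I_1^mR_{\mathfrak p}$ is $\mathfrak pR_{\mathfrak p}$-primary, each term $e(I_nR_{\mathfrak p})$ with $I_n\subseteq\mathfrak p$ is bounded by $m^he(I_1R_{\mathfrak p})$. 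Dividing by $n^h$, the $\mathfrak p$-summand tends to $0$ along all $n$, whether or not $I_n\subseteq\mathfrak p$. In this case we do not need $\gcd=1$.

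Summing over the finite set $\Lambda$ shows that $c_h(I_n;R)/n^h$ converges. Each Case~1 prime contributes $e(\{I_nR_{\mathfrak p}\};R_{\mathfrak p})\,e(R/\mathfrak p)$, and each Case~2 prime contributes $0$.

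For the bound, in Case~1 we have $I_1^nR_{\mathfrak p}\subseteq I_nR_{\mathfrak p}$. Monotonicity and homogeneity of Hilbert--Samuel multiplicity give $e(I_nR_{\mathfrak p})\leq n^he(I_1R_{\mathfrak p})$. Summing against $e(R/\mathfrak p)$ over $\Lambda$ and comparing with the associativity formula for $I_1$ yields $0\leq c_h(\mathcal I;R)\leq c_h(I_1;R)$.
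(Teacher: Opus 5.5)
Your proof is correct and follows the paper's argument. You get vanishing below $h$ from $\operatorname{ht}I_n\geq h$ and the catenary dimension formula. You then apply the associativity formula over the height-$h$ minimal primes of $I_1$, using Cutkosky's theorem when every localization $I_nR_{\mathfrak p}$ is proper and a boundedness argument when some $I_mR_{\mathfrak p}=R_{\mathfrak p}$. The differences from the paper are cosmetic: you bound the second case by $m^he(I_1R_{\mathfrak p};R_{\mathfrak p})$ via $I_1^m$ instead of by $\max_{r\leq q}e(J_r;S)$, and you spell out two points the paper leaves implicit, namely that $\mathfrak p$ is minimal over $I_n$ and that $c_h(I_n;R)=0$ when $\operatorname{ht}I_n>h$.
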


\begin{proof}
Since $I_1^n\subseteq I_n$, one has $\operatorname{ht}I_n\geq h$.
As $R$ is equidimensional and catenary,
$d-\dim(R/I_n)=\operatorname{ht}I_n$. Hence
Proposition~\ref{prop:classical} \textup{(i)} gives
$c_i(I_n;R)=0$ for every $i<h$, and therefore
$c_i(\mathcal I;R)=0$ for $i<h$.
Set
$\Lambda=\{\mathfrak p\in\operatorname{Min}(R/I_1):
\operatorname{ht}\mathfrak p=h\}$. Since $V(I_n)\subseteq V(I_1)$,
every height-$h$ prime containing $I_n$ belongs to $\Lambda$.
Thus the associativity formula
\cite[Proposition~2.4(a)]{PTUV} gives
\[
c_h(I_n;R)
=
\sum_{\mathfrak p\in\Lambda}
b_{\mathfrak p,n}e(R/\mathfrak p),
\]
where $b_{\mathfrak p,n}=e(I_nR_{\mathfrak p};R_{\mathfrak p})$ if
$I_nR_{\mathfrak p}\neq R_{\mathfrak p}$, and
$b_{\mathfrak p,n}=0$ otherwise. 

Fix $\mathfrak p\in\Lambda$, set $S=R_{\mathfrak p}$, and put
$J_n=I_nS$. Then $\dim S=h$, $J_1$ is
$\mathfrak pS$-primary, and $J_1^n\subseteq J_n$. Hence every proper
$J_n$ is $\mathfrak pS$-primary. If all $J_n$ are proper, then $\{J_n\}$ is a graded family of
$\mathfrak pS$-primary ideals, and \cite[Theorem~1.3]{Cut} gives the
finite limit
$\lim_{n\to\infty}b_{\mathfrak p,n}/n^h$.

Suppose instead that $J_q=S$ for some $q\geq1$. For every sufficiently
large $n$, write $n=kq+r$ with $1\leq r\leq q$. The graded condition
gives $J_q^kJ_r\subseteq J_n$, hence $J_r\subseteq J_n$. If $J_n$ is
proper, monotonicity of Hilbert--Samuel multiplicity gives
$b_{\mathfrak p,n}\leq e(J_r;S)$; if $J_n=S$, then
$b_{\mathfrak p,n}=0$. Since only the finitely many ideals
$J_1,\ldots,J_q$ occur, the sequence $b_{\mathfrak p,n}$ is bounded.
As $h>0$, it follows that
$\lim_{n\to\infty}b_{\mathfrak p,n}/n^h=0$. Thus every local normalized sequence has a finite limit. Since
$\Lambda$ is finite,
\begin{equation}
\label{eq:height-assoc}
c_h(\mathcal I;R)
=
\sum_{\mathfrak p\in\Lambda}
\left(
\lim_{n\to\infty}\frac{b_{\mathfrak p,n}}{n^h}
\right)e(R/\mathfrak p),
\end{equation}
so $c_h(\mathcal I;R)$ exists and is finite. Hence, $J_1^n\subseteq J_n$ gives
$0\leq b_{\mathfrak p,n}\leq e(J_1^n;S)
=n^he(J_1;S)$. Dividing by $n^h$, passing to the limit in
\eqref{eq:height-assoc}, and using the associativity formula for
$c_h(I_1;R)$ yields
$0\leq c_h(\mathcal I;R)\leq c_h(I_1;R)$.
\end{proof}

\begin{corollary}
\label{cor:fixed-radical-equimultiple}
Assume that $R$ is equidimensional and catenary, that
$h:=\operatorname{ht}I_1>0$, and that
$\sqrt{I_n}=\sqrt{I_1}$ for every $n\geq1$. Then
$c_h(\mathcal I;R)$ exists, is finite, and satisfies
\begin{equation}
\label{eq:fixed-radical}
c_h(\mathcal I;R)
=
\sum_{\substack{
\mathfrak p\in\operatorname{Min}(R/I_1)\\
\operatorname{ht}\mathfrak p=h}}
e(\mathcal I R_{\mathfrak p};R_{\mathfrak p})
e(R/\mathfrak p).
\end{equation}
If, in addition, $\ell(I_n)=h$ for all $n\gg0$, then all asymptotic
components exist and are finite, and
$c_i(\mathcal I;R)=0$ for every $i\neq h$.
\end{corollary}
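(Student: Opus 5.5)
The existence and finiteness of $c_h(\mathcal I;R)$ are already part of Theorem~\ref{thm:height}. What remains for the first assertion is to identify the local limits appearing in \eqref{eq:height-assoc}. Let $\Lambda$ be as in the proof of that theorem and fix $\mathfrak p\in\Lambda$. The fixed-radical hypothesis gives $I_n\subseteq\sqrt{I_n}=\sqrt{I_1}\subseteq\mathfrak p$ for every $n\geq1$. Hence $J_n=I_nR_{\mathfrak p}$ is proper for all $n$, and the degenerate case $J_q=S$ in that proof never occurs.

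Since $J_1^n\subseteq J_n$ and $J_1$ is $\mathfrak pR_{\mathfrak p}$-primary, $\{J_n\}$ is a graded family of $\mathfrak pR_{\mathfrak p}$-primary ideals in the $h$-dimensional local ring $R_{\mathfrak p}$. Cutkosky's theorem \cite[Theorem~1.3]{Cut} then gives
\[
\lim_{n\to\infty}\frac{b_{\mathfrak p,n}}{n^h}=e(\mathcal IR_{\mathfrak p};R_{\mathfrak p}),
\qquad b_{\mathfrak p,n}=e(J_n;R_{\mathfrak p}).
\]
Substituting this into \eqref{eq:height-assoc} yields \eqref{eq:fixed-radical}. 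We also note that $\Lambda$ is exactly the index set in \eqref{eq:fixed-radical}.

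For the second assertion we handle the components by index.
\begin{itemize}
\item For $i<h$, Theorem~\ref{thm:height} already gives $c_i(\mathcal I;R)=0$.
\item For $i=h$, the first part gives a finite limit.
\item For $i>h$, the assumption $\ell(I_n)=h$ for $n\gg0$ lets us apply Proposition~\ref{prop:radicals}\textup{(iii)} with $s=h$, so $\lowc_i=\upc_i=0$.
\end{itemize}
When $h\geq1$, the case $i=0$ falls under $i<h$. Thus every component exists, is finite, and vanishes for $i\neq h$.

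There is no real obstacle here. The only point that needs checking is that the fixed-radical hypothesis keeps every localization $J_n$ proper, so that Cutkosky's limit applies directly rather than the bounded-sequence argument of Theorem~\ref{thm:height}. One could also remark that $\ell(I_n)\geq\operatorname{ht}I_n=h$ always holds, so the hypothesis amounts to equimultiplicity, but this is not needed for the proof.
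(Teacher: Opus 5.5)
Your proposal is correct and follows essentially the same route as the paper. The fixed-radical hypothesis keeps every $I_nR_{\mathfrak p}$ proper, so Cutkosky's limit identifies the local terms in \eqref{eq:height-assoc}, and the components with $i>h$ vanish by the analytic-spread bound: you cite Proposition~\ref{prop:radicals}\textup{(iii)}, whereas the paper cites Proposition~\ref{prop:classical}\textup{(i)} directly, which amounts to the same argument.
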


\begin{proof}
For every
$\mathfrak p\in\operatorname{Min}(R/I_1)$ with
$\operatorname{ht}\mathfrak p=h$, the equality
$\sqrt{I_n}=\sqrt{I_1}$ implies
$I_nR_{\mathfrak p}\neq R_{\mathfrak p}$ for every $n\geq1$.
Thus the localized family
$\mathcal I R_{\mathfrak p}$ is a graded family of
$\mathfrak pR_{\mathfrak p}$-primary ideals. Hence the local limits
appearing in Theorem~\ref{thm:height} are precisely
$e(\mathcal I R_{\mathfrak p};R_{\mathfrak p})$, and
\eqref{eq:fixed-radical} follows from \eqref{eq:height-assoc}.

Theorem~\ref{thm:height} already gives
$c_i(\mathcal I;R)=0$ for $i<h$. If moreover
$\ell(I_n)=h$ for all $n\gg0$, then
Proposition~\ref{prop:classical}\textup{(i)} gives
$c_i(I_n;R)=0$ for every $i>h$ and all sufficiently large $n$.
Thus $c_i(\mathcal I;R)=0$ for $i\neq h$, while the existence and
finiteness of $c_h(\mathcal I;R)$ were proved above.
\end{proof}

The preceding existence theorem is sharp with respect to the index: already on a regular surface, the next component can exhibit arbitrary asymptotic behavior.

\begin{example}
\label{ex:surface-family}
Let $R=k[[x,y]]$, with $k$ infinite, and let $a,b\geq1$. Set
$L=x^a(x,y^b)$. Then $L$ is integrally closed and
\begin{equation}\label{eq:surface-single}
\sqrt L=(x),\qquad \htop L=1,\qquad
\ell(L)=2,\qquad \cvec(L;R)=(0,a,b(a+1)).
\end{equation}

Indeed, $(x,y^b)$ is integrally closed. If $z$ is integral over
$(x,y^b)$, then modulo $x$ its image is integral over $(y^b)$ in the
DVR $k[[y]]$, hence belongs to $(y^b)$. Thus
$z\in(x,y^b)$. Multiplication by $x^a$ preserves integral closedness:
if $z$ is integral over $x^a(x,y^b)$, then $z/x^a$ is integral over
$(x,y^b)$ and, in particular, over $R$. Since $R$ is normal,
$z/x^a\in R$, and hence $z/x^a\in(x,y^b)$.

Since $R$ is a domain and $L\neq0$, one has $c_0(L;R)=0$. Moreover,
$LR_{(x)}=(x^a)R_{(x)}$, so the associativity formula gives
$c_1(L;R)=a$. To compute $c_2$, choose
\[
f=x^a(\lambda x+\mu y^b),\qquad
g=x^a(\lambda' x+\mu' y^b),
\quad
\lambda\mu(\lambda\mu'-\lambda'\mu)\neq0,
\]
and write $l=\lambda x+\mu y^b$. Then
$(f):L^\infty=(l)$. Indeed, $(l)L\subseteq(f)$, so
$(l)\subseteq(f):L^\infty$. Conversely, if
$h\in(f):L^\infty$, then $hL^t\subseteq(l)$ for some $t$. Since
$R/(l)\cong k[[y]]$ is a domain and the image of $L$ is nonzero, this
forces $h\in(l)$. Modulo $l$, the image of $g$ is a nonzero scalar multiple of
$y^{b(a+1)}$. Hence \cite[Proposition~2.1]{PTUV} gives
$c_2(L;R)=\lambda_R(R/(l,g))=b(a+1)$. Since $c_2(L;R)>0$,
Proposition~\ref{prop:classical}\textup{(i)} gives $\ell(L)\geq2$,
while $\ell(L)\leq\dim R=2$. Thus $\ell(L)=2$, proving
\eqref{eq:surface-single}.

Now choose arbitrary positive integers $b_n$, set $I_0=R$, and define
$I_n=x^{n+1}(x,y^{b_n})$ for $n\geq1$. Then
\[
I_{n+1}\subseteq(x^{n+2})\subseteq I_n,\qquad
I_mI_n\subseteq(x^{m+n+2})\subseteq I_{m+n}
\quad(m,n\geq1),
\]
so $\mathcal I$ is a graded filtration, with no monotonicity condition
on the sequence $\{b_n\}$. Each $I_n$ is integrally closed, and
\begin{equation}\label{eq:surface-components}
\cvec(I_n;R)=(0,n+1,(n+2)b_n).
\end{equation}
Therefore $c_0(\mathcal I;R)=0$ and
$c_1(\mathcal I;R)=1$, whereas the normalized top component
$c_2(I_n;R)/n^2$ is governed by the asymptotic behavior of $b_n/n$.
\end{example}

\begin{corollary}\label{ex:realization}
For every $0\leq A\leq B\leq+\infty$, there is a graded filtration of nonzero integrally closed proper ideals in $k[[x,y]]$ such that $\sqrt{I_n}=(x)$, $\htop I_n=1$, $\ell(I_n)=2$, $c_0(\I;R)=0$, $c_1(\I;R)=1$, $\lowc_2(\I;R)=A$, and $\upc_2(\I;R)=B$.

\end{corollary}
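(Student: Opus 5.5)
We use the family of Example~\ref{ex:surface-family}, $I_n=x^{n+1}(x,y^{b_n})$ for $n\geq1$, and the only task is to choose the positive integers $b_n$. That example shows the family is a graded filtration for every choice of $b_n$. Each $I_n$ is nonzero, proper and integrally closed, with $\sqrt{I_n}=(x)$, $\htop I_n=1$ and $\ell(I_n)=2$. By \eqref{eq:surface-components}, $c_0(I_n;R)=0$ and $c_1(I_n;R)=n+1$, so $c_0(\I;R)=0$ and $c_1(\I;R)=\lim (n+1)/n=1$. Everything therefore reduces to one real-sequence problem. Since $c_2(I_n;R)/n^2=(n+2)b_n/n^2$ and $(n+2)/n\to1$, we must choose positive integers $b_n$ with
\[
\liminf_{n\to\infty}\frac{b_n}{n}=A,\qquad \limsup_{n\to\infty}\frac{b_n}{n}=B .
\]

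To build $(b_n)$, first fix a target sequence $\beta_n\in[0,+\infty)$ with $\liminf\beta_n=A$ and $\limsup\beta_n=B$. If $A,B$ are both finite, let $\beta_n=A$ for even $n$ and $\beta_n=B$ for odd $n$. If $A<B=+\infty$, let $\beta_n=A$ for even $n$ and $\beta_n=n$ for odd $n$. If $A=B=+\infty$, let $\beta_n=n$. Then put
\[
b_n=\max\bigl(1,\lceil n\beta_n\rceil\bigr).
\]
This gives
\[
\beta_n\leq \frac{b_n}{n}\leq \beta_n+\frac{1}{n}
\]
whenever $n\beta_n\geq1$. When $n\beta_n<1$, we have $b_n=1$, so $b_n/n=1/n\to0$ while $\beta_n<1/n$. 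In both cases $b_n/n-\beta_n\to0$. Therefore $b_n/n$ has the same $\liminf$ and $\limsup$ as $\beta_n$, which are $A$ and $B$.

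Multiplying by $(n+2)/n\to1$ does not change these limits, including the value $+\infty$. Hence $\lowc_2(\I;R)=A$ and $\upc_2(\I;R)=B$.

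The only point that needs care is the edge case $A=0$ (in particular $A=B=0$). Here $b_n\geq1$ is forced, but the term $1/n$ in the estimate above still gives limit $0$. No monotonicity of $(b_n)$ is needed, since Example~\ref{ex:surface-family} imposes none. So I expect no real obstacle: the substantive work was already done in Example~\ref{ex:surface-family}.
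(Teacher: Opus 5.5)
Your proposal is correct and is essentially the paper's proof: it uses the family of Example~\ref{ex:surface-family} and reduces everything to choosing $b_n$ with prescribed $\liminf$ and $\limsup$ of $b_n/n$. Your choice $b_n=\max\{1,\lceil n\beta_n\rceil\}$ is exactly the paper's choice: $\max\{1,\lceil An\rceil\}$ on even $n$, and $\max\{1,\lceil Bn\rceil\}$ or $n^2$ on odd $n$. Your explicit $1/n$ error bound also spells out the $A=0$ edge case, which the paper leaves implicit.
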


\begin{proof}
By Example~\ref{ex:surface-family}, it is enough to choose positive
integers $b_n$ with
$\liminf_{n\to\infty}b_n/n=A$ and
$\limsup_{n\to\infty}b_n/n=B$, since
\[
\frac{c_2(I_n;R)}{n^2}
=
\left(1+\frac{2}{n}\right)\frac{b_n}{n}.
\]

If $A<\infty$, take
$b_n=\max\{1,\lceil An\rceil\}$ for even $n$. For odd $n$, take
$b_n=\max\{1,\lceil Bn\rceil\}$ when $B<\infty$, and $b_n=n^2$
when $B=\infty$. Then the even and odd subsequences have limits
$A$ and $B$, respectively, and hence
$\lowc_2(\mathcal I;R)=A$ and
$\upc_2(\mathcal I;R)=B$. If $A=B=\infty$, take $b_n=n^2$ for every $n$. All the remaining
properties follow from Example~\ref{ex:surface-family}.
\end{proof}

\begin{remark}
\label{ex:non-noetherian-algebra}
The family algebra in Example~\ref{ex:surface-family} is never
Noetherian. Indeed, if it were generated in degrees at most $N$, then
every homogeneous element of degree $n$ would have $x$-order at least
$(1+1/N)n$, whereas $x^{n+1}y^{b_n}\in I_n$ has $x$-order $n+1$,
a contradiction for $n>N$.

Moreover, every real valuation $v$ of $\operatorname{Frac}(R)$
nonnegative on $R$ satisfies
$v(I_n)=(n+1)v(x)+\min\{v(x),b_nv(y)\}$ and hence
$\lim_{n\to\infty}v(I_n)/n=v(x)$. Thus the asymptotic valuation orders
are independent of the sequence $(b_n)$, although the higher
multiplicity components are not. Finally,
$(x)^{n+2}\subseteq I_n\subseteq(x)^{n+1}$ shows that such
containments do not yield a squeeze principle for $c_2$; in
particular, $c_2(I_n;R)$ need not be $O(n^2)$.
\end{remark}

\subsection*{Common-model convergence in the analytic case}

We next give a geometric convergence criterion in the complex analytic setting. By Achilles--Rams, the positive components of the multiplicity sequence agree with the Segre numbers of Gaffney--Gassler, which admit intersection-theoretic formulas on suitable blowups. On a common model, the divisors associated with the family lie in a fixed finite-dimensional space; their coefficients are subadditive, and Fekete's lemma together with multilinearity of intersection products yields convergence. This viewpoint is related to the recent work of Cutkosky and Monta\~no \cite{CutMontanoFamilies} on graded $\mathfrak m$-primary families, while the result below treats all positive components for arbitrary ideals.
\begin{theorem}
\label{thm:common-analytic-model-convergence}
Let $(X,0)$ be a reduced complex analytic germ of pure dimension
$d\geq1$, let $R=\mathcal O_{X,0}$, and let
$\mathcal I=\{I_n\}_{n\geq0}$ be a graded family of proper ideals of
$R$. Assume that $\operatorname{ht}I_1>0$ and that there exists a
proper modification $\pi\colon Y\longrightarrow X$
such that $I_n\mathcal O_Y$ is invertible for every $n\geq1$. Then the
limit
\[
c_i(\mathcal I;R)
:=
\lim_{n\to\infty}\frac{c_i(I_n;R)}{n^i}
\]
exists and is finite for every $1\leq i\leq d$.
\end{theorem}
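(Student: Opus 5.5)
The plan is to reduce everything to intersection numbers on the fixed model $Y$. We write each $c_i(I_n;R)$ as a fixed polynomial function of the coefficients of the Cartier divisor $D_n$ defined by $I_n\mathcal O_Y$. We then show that these coefficients, divided by $n$, converge.

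\emph{Step 1 (fixed divisor space).} Write $I_n\mathcal O_Y=\mathcal O_Y(-D_n)$, where $D_n$ is an effective Cartier divisor. By Proposition~\ref{prop:radicals}(i), $V(I_n)\subseteq V(I_1)$, so $\operatorname{Supp}D_n\subseteq\pi^{-1}(V(I_1))$. After shrinking $X$ to a small representative, $\pi^{-1}(V(I_1))$ has only finitely many irreducible components near the compact fiber $\pi^{-1}(0)$. Let $E_1,\dots,E_r$ be those of codimension one; this list is independent of $n$. Then
\[
D_n=\sum_{j=1}^r a_{n,j}E_j,\qquad a_{n,j}\geq0 .
\]
Since $\operatorname{ht}I_1>0$, no $E_j$ is a component of $Y$. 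Taking multiplicity along the generic point of each $E_j$ makes $a_{n,j}$ a discrete valuation of $I_n$. The inclusion $I_mI_n\subseteq I_{m+n}$ then gives $a_{m+n,j}\leq a_{m,j}+a_{n,j}$. By Fekete's lemma, $\alpha_j:=\lim_n a_{n,j}/n=\inf_n a_{n,j}/n$ exists and lies in $[0,a_{1,j}]$.

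\emph{Step 2 (intersection formula).} By Achilles--Rams \cite{AchillesRams}, for $1\leq i\leq d$ the component $c_i(I;R)$ equals the Gaffney--Gassler Segre number of $I$. For an ideal principalized on $Y$, this Segre number is computed by a localized intersection number over $0$. Up to the sign convention, it has the form
\[
c_i(I_n;R)=\pm\bigl(D_n^{\,i}\cdot\pi^*H^{\,d-i}\bigr)_{\pi^{-1}(0)},
\]
where $H$ is a generic hyperplane section through $0$ (equivalently, the divisor of a generic element of $\mathfrak m$). One obtains this from the blowup formula by pulling back along $Y\to\operatorname{Bl}_{I_n}X$ and using the projection formula. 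The key point is that these numbers depend only on the divisor classes near $\pi^{-1}(0)$, not on the generic choices, which vary with $n$. Granting this, multilinearity gives
\[
c_i(I_n;R)=P_i(a_{n,1},\dots,a_{n,r}),
\]
where $P_i$ is a homogeneous polynomial of degree $i$. Its coefficients $\pm\bigl(E_{j_1}\cdots E_{j_i}\cdot\pi^*H^{d-i}\bigr)_{\pi^{-1}(0)}$ are fixed intersection numbers on $Y$, independent of $n$.

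\emph{Step 3 (conclusion).} By homogeneity, $c_i(I_n;R)/n^i=P_i(a_{n,1}/n,\dots,a_{n,r}/n)$. Continuity of $P_i$ then gives convergence to the finite real number $P_i(\alpha_1,\dots,\alpha_r)$ for every $1\leq i\leq d$.

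The main obstacle is Step 2. We must justify that the Segre-number intersection formula is a genuinely multilinear, $n$-independent expression in the $E_j$. This needs two things. First, the formula must localize correctly over $0$: the $E_j$ that are not contained in $\pi^{-1}(0)$ (strict transforms of codimension-one components of $V(I_1)$) must contribute only through their intersection with $\pi^{*}H^{d-i}$ over $0$. Second, the generic hyperplanes chosen for $I_n$ must not affect the answer. I would first prove the formula for a single ideal by expressing $c_i$ as a degree of the Segre class $s(V(I),X)$ localized at $0$. The birational invariance of Segre classes under $Y\to\operatorname{Bl}_IX$ then lets me push this to $Y$. Its degree-$i$ part is a polynomial in $D$ with coefficients given by proper intersection products on $Y$, which depend only on $Y$ and $H$.
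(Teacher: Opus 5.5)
\textbf{Gap in Step~2.} Your architecture is the paper's: fixed support on the common model, Fekete on the subadditive coefficients, $c_i(I_n;R)$ as a homogeneous degree-$i$ polynomial in $D_n$, then continuity. The gap is Step~2, which carries all the content beyond Fekete. It is not only unproven ("granting this"): the formula you write is not correct as stated, and the polynomial you build from it is not well defined.

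First, $\pi^*H$ is the pullback of a principal divisor, so $c_1(\mathcal O_Y(\pi^*H))=0$. Hence $D_n^{\,i}\cdot\pi^*H^{\,d-i}$ vanishes as a class for $i<d$, and a nonzero $c_i$ could only come from a refined, localized product, which you never specify. The natural refined product lives on $\pi^{-1}(H_1\cap\cdots\cap H_{d-i}\cap V(I_n))$. That set is positive-dimensional, hence not proper over $0$, as soon as $i>\htop I_n$. For $i=d$, products of horizontal components are classes on non-compact sets, so $(D_n^{\,d})_{\pi^{-1}(0)}$ has no meaning; the correct formula for $c_d=j(I_n;R)$ sees only the components of $D_n$ lying over $0$. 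Your fallback, a degree of $s(V(I),X)$ ``localized at $0$'', runs into the obstruction of Remark~\ref{rem:local-degree-caution}: multiplicity at $0$ does not descend to rational equivalence, so a Segre \emph{class} does not determine the Segre numbers.

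Second, the coefficients $E_{j_1}\cdots E_{j_i}\cdot\pi^*H^{d-i}$ require the prime divisors $E_j$ to be ($\mathbb Q$-)Cartier. This fails in general, even on a normal $Y$. Since you never normalize, $D_n$ is not even determined by its Weil coefficients, so its intersection numbers need not be functions of $(a_{n,1},\dots,a_{n,r})$.

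\textbf{How the paper closes it.} It also blows up $\m\mathcal O_Y$ and normalizes, so that $\m\mathcal O_Y=\mathcal O_Y(-F)$. Generic hyperplanes are then replaced by the divisor $F$ and the class $H=c_1(\mathcal O_Y(-F))$, which turns multiplicity at $0$ into a degree on the compact fiber. Pulling back Gaffney--Gassler's formulas \cite{GaffneyGassler} along $Y\to\operatorname{Bl}_{\m I_n}X$ and using Achilles--Rams gives
\[
c_i(I_n;R)=\deg_{\pi^{-1}(0)}\bigl(H^{d-i-1}L(D_n)^{i-1}\cap(F\cdot[D_n]^{\mathrm{hor}})\bigr)\quad(i<d),\qquad c_d(I_n;R)=\deg_{\pi^{-1}(0)}\bigl(L(D_n)^{d-1}\cap[D_n]^{\mathrm{vert}}\bigr),
\]
where $L(D)=c_1(\mathcal O_Y(-D))$. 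All cycles here are supported on $\pi^{-1}(0)$, so no generic choices enter.

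The polynomial $P_i$ is then defined not on Weil coefficients but on the real span $V$ of the Cartier divisors $D_n$. Because $Y$ is normal, $V$ injects into $\bigoplus_\nu\mathbb R E_\nu$, so it is finite-dimensional and $D_n/n$ converges in $V$. Built from the linear maps $D\mapsto L(D)$ and the horizontal/vertical projections, $P_i$ is a continuous homogeneous polynomial of degree $i$ on $V$. With these two repairs, your Step~3 goes through unchanged.
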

\begin{proof}
After replacing $Y$ by the reduced union of the components dominating
those of $X$, blowing up $\mathfrak m\mathcal O_Y$, and normalizing,
we may assume that $Y$ is normal and that both
$\mathfrak m\mathcal O_Y$ and every $I_n\mathcal O_Y$ are invertible.
These operations preserve properness of the modification and
invertibility of the $I_n\mathcal O_Y$. Thus there are effective
Cartier divisors $F,D_1,D_2,\ldots$ such that
\begin{equation}
\label{eq:common-model-divisors}
\mathfrak m\mathcal O_Y=\mathcal O_Y(-F),
\qquad
I_n\mathcal O_Y=\mathcal O_Y(-D_n).
\end{equation}
The divisor $F$ is supported on $\pi^{-1}(0)$. The inclusions $I_mI_n\subseteq I_{m+n}$ give
$D_{m+n}\leq D_m+D_n$, while $I_1^n\subseteq I_n$ gives
$0\leq D_n\leq nD_1$. Let $E_1,\ldots,E_r$ be the prime divisors
appearing in $D_1$. Then
\begin{equation}
\label{eq:Dn-fixed-support}
[D_n]=\sum_{\nu=1}^r a_{\nu,n}[E_\nu],
\qquad
0\leq a_{\nu,n}\leq na_{\nu,1},
\end{equation}
and $a_{\nu,m+n}\leq a_{\nu,m}+a_{\nu,n}$. Hence Fekete's lemma gives
finite limits
\begin{equation}
\label{eq:asymptotic-divisor-coefficients}
\alpha_\nu
=
\lim_{n\to\infty}\frac{a_{\nu,n}}{n}
\qquad(1\leq\nu\leq r).
\end{equation}

Let $V$ be the real vector space generated by the Cartier divisors
$D_n$. Since $Y$ is normal, a Cartier divisor is determined by its
associated Weil divisor, and \eqref{eq:Dn-fixed-support} embeds $V$
into the finite-dimensional space spanned by
$E_1,\ldots,E_r$. Hence $V$ is finite-dimensional. By
\eqref{eq:asymptotic-divisor-coefficients}, the sequence $D_n/n$
converges in $V$ to an element $D_\infty$ whose Weil coefficients are
$\alpha_1,\ldots,\alpha_r$.

For a divisor supported on $E_1\cup\cdots\cup E_r$, write
$[D]^{\mathrm{vert}}$ for the sum of the components mapping to $0$
and $[D]^{\mathrm{hor}}$ for the remaining components. Put
$H=c_1(\mathcal O_Y(-F))$ and
$L(D)=c_1(\mathcal O_Y(-D))$.
Since $\mathfrak m\mathcal O_Y$ and $I_n\mathcal O_Y$ are invertible,
the universal property of blowups gives morphisms from $Y$ to
$\operatorname{Bl}_{\mathfrak m}X$,
$\operatorname{Bl}_{I_n}X$, and
$\operatorname{Bl}_{\mathfrak m I_n}X$. Thus the intersection
formulas of Gaffney--Gassler
\cite[(3.2.3) and (3.2.4)]{GaffneyGassler}, pulled back to $Y$ and
combined with the projection formula, give
\begin{equation}
\label{eq:common-model-lower-components}
c_i(I_n;R)
=
\deg_{\pi^{-1}(0)}
\left(
H^{d-i-1}L(D_n)^{i-1}
\cap\bigl(F\cdot[D_n]^{\mathrm{hor}}\bigr)
\right)
\end{equation}
for $1\leq i<d$, and
\begin{equation}
\label{eq:common-model-top-component}
c_d(I_n;R)
=
\deg_{\pi^{-1}(0)}
\left(
L(D_n)^{d-1}\cap[D_n]^{\mathrm{vert}}
\right).
\end{equation}
Here we use the identification of Segre numbers with the
codimension-indexed multiplicity-sequence components
\cite[Theorem~2]{AchillesRams}. The relevant cycles are supported on
the proper fiber $\pi^{-1}(0)$, so their degrees depend only on their
rational-equivalence classes.

For $1\leq i<d$, define
\[
P_i(D)
=
\deg_{\pi^{-1}(0)}
\left(
H^{d-i-1}L(D)^{i-1}
\cap\bigl(F\cdot[D]^{\mathrm{hor}}\bigr)
\right),
\]
and define
\[
P_d(D)
=
\deg_{\pi^{-1}(0)}
\left(
L(D)^{d-1}\cap[D]^{\mathrm{vert}}
\right).
\]
Since $D\mapsto L(D)$ and the vertical and horizontal projections are
linear, multilinearity of intersection products shows that $P_i$ is a
homogeneous polynomial function of degree $i$ on $V$. By
\eqref{eq:common-model-lower-components} and
\eqref{eq:common-model-top-component},
$c_i(I_n;R)=P_i(D_n)$ for every $n\geq1$. Therefore
\[
\frac{c_i(I_n;R)}{n^i}
=
P_i\left(\frac{D_n}{n}\right).
\]
Since $D_n/n\to D_\infty$ in the finite-dimensional space $V$,
continuity of $P_i$ gives
\[
\lim_{n\to\infty}\frac{c_i(I_n;R)}{n^i}
=
P_i(D_\infty),
\]
which is finite. This proves the assertion for every
$1\leq i\leq d$.
\end{proof}

\begin{remark}
\label{rem:common-model-scope}
The preceding argument applies to reduced, pure-dimensional complex
analytic local rings. Extending it to arbitrary equidimensional,
universally catenary Noetherian local rings would require algebraic
counterparts of the fiber-supported intersection formulas
\eqref{eq:common-model-lower-components}--
\eqref{eq:common-model-top-component}, as well as a treatment of
nilpotent contributions. We therefore keep the analytic result
separate from the corresponding general algebraic question.
\end{remark}

We now turn to the Noetherian case. Finite generation allows us to pass to a standard Veronese subalgebra and control each residue class of degrees by a fixed factor, which forces convergence of every asymptotic component.
\begin{theorem}
\label{thm:noetherian-convergence}
Let \(\mathcal I=\{I_n\}_{n\geq0}\)
be a graded filtration of proper ideals, and assume that \(\mathscr R(\mathcal I) = \bigoplus_{n\geq0}I_nt^n\)
is finitely generated over $R$. Then, for every $0\leq i\leq d$,
\(
\lowc_i(\mathcal I;R)
=
\upc_i(\mathcal I;R)
<
+\infty.
\)
More precisely, there exists an integer $q\geq1$ such that
\begin{equation}
\label{eq:limit-formula}
c_i(\mathcal I;R)
=
\lim_{n\to\infty}
\frac{c_i(I_n;R)}{n^i}
=
\frac{c_i(I_q;R)}{q^i}
\qquad(0\leq i\leq d).
\end{equation}
\end{theorem}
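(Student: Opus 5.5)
The plan is to use the standard fact that a finitely generated family algebra $\mathscr R(\mathcal I)$ has a Veronese subalgebra that is standard graded. Concretely, there is $q\geq1$ with $I_{qm}=I_q^m$ for all $m\geq1$. If $\mathscr R(\mathcal I)$ is generated in degrees $\leq N$, then $q=N!$, or a suitable multiple of it, works, by the classical argument of Bourbaki / Herzog--Hibi--Trung. I will also use the related fact that for every residue $r$ with $0\leq r<q$ there is $m_0$ such that
\[
I_{qm+r}=I_q^{\,m-m_0}I_{qm_0+r}\qquad(m\geq m_0).
\]
This holds because $\bigoplus_m I_{qm+r}t^{qm+r}$ is a finitely generated module over the Veronese $\bigoplus_m I_{qm}t^{qm}=R[I_qt^q]$. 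Hence it is generated in bounded degree, which gives the displayed equality for $m$ large. Set $K_r:=I_{qm_0+r}$ for a common $m_0$. By Proposition~\ref{prop:homogeneity}, the Veronese subsequence satisfies $c_i(I_{qm};R)=m^ic_i(I_q;R)$, so along $n=qm$ we get $c_i(I_n;R)/n^i=c_i(I_q;R)/q^i$ exactly.

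For the remaining residue classes I split into $i=0$ and $i\geq1$. For $i=0$, the family is a filtration, so Proposition~\ref{prop:radicals}(iv) shows $c_0(I_n;R)=c_0(I_1;R)=c_0(I_q;R)$ is constant, and the formula holds. For $1\leq i\leq d$, I apply the fixed-factor estimate, Lemma~\ref{lem:fixed-factor}, with $J=I_q$ and $K=K_r$. The filtration property gives $I_1^n\subseteq I_n\subseteq I_1$, so all $I_n$ have the same radical $\sqrt{I_1}$. In particular $\sqrt{I_q}=\sqrt{K_r}$, and the lemma yields
\[
c_i(I_{qm+r};R)=c_i(I_q^{\,m-m_0}K_r;R)=(m-m_0)^ic_i(I_q;R)+O(m^{i-1}).
\]
Dividing by $(qm+r)^i$ and letting $m\to\infty$ gives the limit $c_i(I_q;R)/q^i$ in every residue class. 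Since there are finitely many classes, the full sequence converges to this finite value, which proves \eqref{eq:limit-formula}.

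The main obstacle is the structural step: producing a single $q$ with $I_{qm}=I_q^m$ for all $m\geq1$, not merely for $m$ large, together with the eventual module relation for each residue. If the Veronese is only eventually standard, I will replace $q$ by a multiple $q'=qm_1$, chosen so that $I_{q'm}=I_{q'}^{m}$ holds for all $m\geq1$. This preserves both the radical condition and the module relation. The nontrivial input of the filtration hypothesis enters only through the common radical needed for Lemma~\ref{lem:fixed-factor} and through the constancy of $c_0$; Example~\ref{ex:periodic-zero} shows that the $c_0$ part genuinely fails without it.
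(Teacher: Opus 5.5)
Your proposal is correct and follows the paper's proof essentially step for step: a standard Veronese index $q$ from finite generation, the residue-class decomposition $I_{qm+r}=I_q^{\,m-m_0}K_r$ coming from finiteness of each residue module over $R[I_qt^q]$, constancy of $c_0$ from the common radical forced by the filtration property, and Lemma~\ref{lem:fixed-factor} for the positive components. Two small differences: the paper spells out why each residue module is finite over the Veronese (the family algebra is integral and of finite type over it, hence finite), which you only assert, and your fallback for an ``eventually standard'' Veronese is unnecessary, since the cited Stacks lemma already gives $I_{qm}=I_q^m$ for all $m$.
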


\begin{proof}
Since $\mathscr R(\mathcal I)$ is a finitely generated graded
$R$-algebra, there exists $q\geq1$ such that the Veronese algebra
$B=\mathscr R(\mathcal I)^{(q)}$ is standard graded
\cite[Lemma~10.56.2]{Stacks}. Setting $J=I_q$, this means that
\begin{equation}
\label{eq:noeth-standard-veronese}
I_{qn}=J^n
\qquad(n\geq0).
\end{equation}

For $0\leq r<q$, let
$M^{(r)}=\bigoplus_{n\geq0}I_{qn+r}t^{qn+r}$, viewed after regrading
as a graded $B$-module. The algebra $\mathscr R(\mathcal I)$ is
integral over $B$: if $z$ is homogeneous, then $z^q\in B$. Since
$\mathscr R(\mathcal I)$ is of finite type over $B$ and $B$ is
Noetherian, it is finite over $B$. Hence each $M^{(r)}$ is a finite
graded $B$-module. As $B$ is standard, there exists $a_r$ such that
$M^{(r)}_{n+1}=B_1M^{(r)}_n$ for all $n\geq a_r$. Thus, with
$K_r=I_{qa_r+r}$,
\begin{equation}
\label{eq:residue-factor}
I_{qn+r}=J^{\,n-a_r}K_r
\qquad(n\geq a_r).
\end{equation}
For $r=0$, we may take $a_0=0$, so
\eqref{eq:noeth-standard-veronese} applies directly. Since $\mathcal I$ is a filtration,
$I_1^m\subseteq I_m\subseteq I_1$ for every $m\geq1$. Hence
\begin{equation}
\label{eq:noeth-fixed-radical}
\sqrt{I_m}=\sqrt{I_1}
\qquad(m\geq1).
\end{equation}
In particular, $J$ and each $K_r$ with $r>0$ have the same radical.
For $i=0$, Proposition~\ref{prop:classical}\textup{(ii)} and
\eqref{eq:noeth-fixed-radical} give
$c_0(I_m;R)=c_0(I_1;R)$ for every $m\geq1$. Hence
$c_0(\mathcal I;R)=c_0(I_q;R)$.

Now fix $1\leq i\leq d$. For $r=0$, homogeneity and
\eqref{eq:noeth-standard-veronese} give
$c_i(I_{qn};R)=n^ic_i(J;R)$. For $1\leq r<q$,
Lemma~\ref{lem:fixed-factor} and \eqref{eq:residue-factor} give
\[
c_i(I_{qn+r};R)
=
(n-a_r)^ic_i(J;R)+O(n^{i-1}).
\]
Since $(n-a_r)^i=n^i+O(n^{i-1})$ and
$(qn+r)^i=q^in^i+O(n^{i-1})$, it follows that
\[
\lim_{n\to\infty}
\frac{c_i(I_{qn+r};R)}{(qn+r)^i}
=
\frac{c_i(J;R)}{q^i}
\]
for every residue class $r$ modulo $q$. Hence the full sequence
converges to this same value. Since $J=I_q$,
\[
c_i(\mathcal I;R)
=
\frac{c_i(I_q;R)}{q^i}
\qquad(1\leq i\leq d).
\]
Together with the case $i=0$, this proves
\eqref{eq:limit-formula} and the finiteness of all asymptotic
components.
\end{proof}


Fix an integer $q\geq1$ such that
$I_{qv}=I_q^v$ for every $v\geq0$, whose existence follows from
finite generation of $\mathscr R(\mathcal I)$
\cite[Lemma~10.56.2]{Stacks}. Set
\[
G_q(\mathcal I)
:=
\bigoplus_{v\geq0}I_{qv}/I_{q(v+1)}
=
\operatorname{gr}_{I_q}(A),
\qquad
\mathfrak N_q
:=
\mathfrak m/I_q\oplus
\bigoplus_{v>0}I_q^v/I_q^{v+1}.
\]
Thus $\mathfrak N_q$ is the homogeneous maximal ideal of
$G_q(\mathcal I)$.

The following result extends
\cite[Proposition~2.5 and Corollary~2.6]{AM97}
from a single ideal to Noetherian filtrations.

\begin{proposition}
\label{prop:total-multiplicity-family}
Let $(A,\mathfrak m)$ be a $d$-dimensional Noetherian local ring and
let $\mathcal I$ be a Noetherian graded filtration. Then
\begin{equation}
\label{eq:total-multiplicity-family}
e_{\mathfrak N_q}\bigl(G_q(\mathcal I)\bigr)
=
\sum_{i=0}^{d}q^ic_i(\mathcal I;A).
\end{equation}
In particular, if $\mathcal I$ is standard, equivalently
$I_n=I_1^n$ for every $n\geq0$, then
\[
e_{\mathfrak N_1}\bigl(G_1(\mathcal I)\bigr)
=
\sum_{i=0}^{d}c_i(\mathcal I;A).
\]
\end{proposition}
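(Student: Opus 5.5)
Since $q$ is chosen so that $I_{qv}=I_q^v$ for every $v\geq0$, the associated graded ring $G_q(\mathcal I)$ is literally $\operatorname{gr}_{I_q}(A)$, and $\mathfrak N_q$ is its homogeneous maximal ideal. The plan is therefore to reduce \eqref{eq:total-multiplicity-family} to the single-ideal statement for $J=I_q$ and then rescale using Theorem~\ref{thm:noetherian-convergence}. First I will invoke the single-ideal formula of Achilles--Manaresi \cite[Proposition~2.5 and Corollary~2.6]{AM97}: for any proper ideal $J$ of a $d$-dimensional Noetherian local ring,
\[
e_{\mathfrak N}\bigl(\operatorname{gr}_J(A)\bigr)=\sum_{i=0}^{d}c_i(J;A),
\]
where $\mathfrak N$ is the homogeneous maximal ideal. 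This identity is invariant under reindexing $c_i\leftrightarrow c_{d-i}^{\rm AM}$ by \eqref{eq:indexing-AM-PTUV}, because it involves the full sum. Applying it with $J=I_q$ gives $e_{\mathfrak N_q}(G_q(\mathcal I))=\sum_i c_i(I_q;A)$.

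If one prefers not to quote \cite{AM97} as a black box, I would sketch the argument directly. The Hilbert function of $\mathfrak N^n$ on $\operatorname{gr}_J(A)$ satisfies
\[
\lambda\bigl(G/\mathfrak N^{n+1}\bigr)=\sum_{u+v\leq n}\lambda\bigl([\operatorname{gr}_{\m}(G)]_{(u,v)}\bigr).
\]
Its leading term is obtained by summing the bigraded Hilbert function over the triangle $u+v\leq n$. The bigraded function has degree-$(d-1)$ homogeneous part $\sum_i c_i\,u^{d-i}v^{i-1}/((d-i)!(i-1)!)$. This is a difference in $r$ of \eqref{eq:multiplicity-sequence-polynomial}, up to lower order, with Artin--Rees controlling the discrepancy between $\m^{u}I^v+I^{v+1}$ and the cumulative version, exactly as in Lemma~\ref{lem:additivity}. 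The integral over the simplex gives the Beta-integral identity
\[
\int_{u+v\leq1}\frac{u^{d-i}v^{i-1}}{(d-i)!(i-1)!}\,du\,dv=\frac{1}{d!}
\]
for each $i$, so the normalized leading coefficient is $\sum_i c_i$. The one subtlety is the $i=0$ term, where there is no $v$-variable. I would handle it by noting that $c_0$ comes from the $v=0$ slice $A/J$ together with top-dimensional components on which $J$ is nilpotent, and that the same simplex computation yields coefficient $1$. This is the step I expect to be the main obstacle if done from scratch, which is why I would rely on \cite{AM97}.

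Second, Theorem~\ref{thm:noetherian-convergence}, applied to the same $q$, gives $c_i(\mathcal I;A)=c_i(I_q;A)/q^i$, so $q^ic_i(\mathcal I;A)=c_i(I_q;A)$ for every $0\leq i\leq d$. The proof of that theorem works for any $q$ with $I_{qn}=I_q^n$, since it only uses \eqref{eq:noeth-standard-veronese} together with the residue-class factorization, and the latter holds for any such $q$ by finite generation. Substituting into the first step gives \eqref{eq:total-multiplicity-family}.

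Finally, in the standard case $I_n=I_1^n$ we may take $q=1$, so $G_1(\mathcal I)=\operatorname{gr}_{I_1}(A)$. By Proposition~\ref{prop:compatibility}(i), $c_i(\mathcal I;A)=c_i(I_1;A)$, which yields the displayed special case.
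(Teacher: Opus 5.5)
Your main argument is the paper's proof: you identify $G_q(\mathcal I)$ with $\operatorname{gr}_{I_q}(A)$, apply \cite[Proposition~2.5 and Corollary~2.6]{AM97}, rescale with Theorem~\ref{thm:noetherian-convergence}, and take $q=1$ in the standard case. Your remark that the rescaling holds for every $q$ with $I_{qv}=I_q^v$ is a worthwhile point the paper leaves implicit. Drop the optional direct sketch, though, because it fails as written: the function summed over the triangle must be the non-cumulative bigraded function, whose degree-$(d-2)$ part involves only $c_1,\ldots,c_{d-1}$, with $c_0$ and $c_d$ coming from boundary strips. Your degree-$(d-1)$ integrand instead gives $\int_{u+v\leq1}u^{d-i}v^{i-1}/((d-i)!(i-1)!)\,du\,dv=1/(d+1)!$, not $1/d!$, which would mean growth of degree $d+1$.
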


\begin{proof}
Since $G_q(\mathcal I)=\operatorname{gr}_{I_q}(A)$, the classical
total multiplicity formula
\cite[Proposition~2.5 and Corollary~2.6]{AM97} gives
\[
e_{\mathfrak N_q}\bigl(G_q(\mathcal I)\bigr)
=
\sum_{i=0}^{d}c_i(I_q;A).
\]
By Theorem~\ref{thm:noetherian-convergence},
$c_i(I_q;A)=q^ic_i(\mathcal I;A)$ for every $0\leq i\leq d$, which
gives \eqref{eq:total-multiplicity-family}. The standard case follows
by taking $q=1$.
\end{proof}

\begin{example}
\label{ex:total-multiplicity-non-noetherian}
Let $R=k[[x]]$, $\mathfrak m=(x)$, and set
$I_0=R$ and $I_n=\mathfrak m$ for every $n\geq1$. Then
$\mathcal I$ is a graded filtration. For every $q\geq1$,
\[
G_q(\mathcal I)
=
\bigoplus_{v\geq0}I_{qv}/I_{q(v+1)}
\cong R/\mathfrak m=k,
\]
concentrated in degree zero. Hence
$e_{(0)}(G_q(\mathcal I))=1$. On the other hand, since $\mathfrak m$ is $\mathfrak m$-primary in
the one-dimensional regular local ring $R$, one has
$c_0(I_n;R)=0$ and $c_1(I_n;R)=1$ for every $n\geq1$. Therefore
$c_0(\mathcal I;R)=c_1(\mathcal I;R)=0$, and hence
\[
e_{(0)}(G_q(\mathcal I))
=
1
\neq
0
=
\sum_{i=0}^1q^ic_i(\mathcal I;R).
\]

Moreover, no $q$ is a standard Veronese index, since
$I_{2q}=\mathfrak m\neq\mathfrak m^2=I_q^2$. Thus
$\mathscr R(\mathcal I)$ is not finitely generated, by
\cite[Lemma~10.56.2]{Stacks}. Thus the total multiplicity formula of Proposition~\ref{prop:total-multiplicity-family} may fail completely without finite generation.
\end{example}

\section{Integral closure and Rees criteria}
\label{sec:closure}

For graded families, three closure operations naturally arise:
termwise integral closure, integral closure of the family algebra in
$R[t]$, and closure defined by asymptotic valuation inequalities.
These constructions need not coincide.
For a graded family $\mathcal I$, define
\[
\mathcal I^{\mathrm{term}}
=
\{\overline{I_n}\}_{n\geq0},
\qquad
\overline{\mathscr R(\mathcal I)}^{\,R[t]}
=
\bigoplus_{n\geq0}I_n^{\mathrm{alg}}t^n.
\]
Here the first bar denotes integral closure of an ideal, while the
second denotes integral closure of the algebra relative to $R[t]$.
The latter is graded by \cite[Theorem~2.3.2]{HunekeSwanson}. Thus
$f\in I_n^{\mathrm{alg}}$ means that $ft^n$ is integral over
$\mathscr R(\mathcal I)$.

\begin{proposition}
\label{prop:comparison-closures}
The family $\mathcal I^{\mathrm{term}}$ is graded, and it is a
filtration whenever $\mathcal I$ is a filtration. The family
$\mathcal I^{\mathrm{alg}}$ is a graded filtration, with
$I_n^{\mathrm{alg}}$ proper for every $n>0$. Moreover, $\mathcal I
\subseteq
\mathcal I^{\mathrm{term}}
\subseteq
\mathcal I^{\mathrm{alg}}$
degreewise. If $\mathcal I=\{I^n\}_{n\geq0}$ is adic, then
$\mathcal I^{\mathrm{term}}
=\mathcal I^{\mathrm{alg}}
=\{\overline{I^n}\}_{n\geq0}$.
\end{proposition}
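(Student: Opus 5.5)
We verify the assertions of Proposition~\ref{prop:comparison-closures} in order: first termwise closure, then algebraic closure, then the chain of inclusions, then the adic case. The notation $\mathcal I^{\mathrm{alg}}$ means the family $\{I_n^{\mathrm{alg}}\}_{n\geq0}$. It is well defined because $\overline{\mathscr R(\mathcal I)}^{\,R[t]}$ is a graded subring of $R[t]$ \cite[Theorem~2.3.2]{HunekeSwanson}. In particular, $I_n^{\mathrm{alg}}$ is an ideal of $R$.

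\emph{Termwise closure.} We use the standard inclusion $\overline{I}\,\overline{J}\subseteq\overline{IJ}$. Then $I_mI_n\subseteq I_{m+n}$ gives $\overline{I_m}\,\overline{I_n}\subseteq\overline{I_mI_n}\subseteq\overline{I_{m+n}}$. Also $\overline{R}=R$, and monotonicity of integral closure preserves $I_{n+1}\subseteq I_n$. Properness holds because $\overline{I_n}\subseteq\sqrt{I_n}\subsetneq R$.

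\emph{Algebraic closure.} Since the integral closure is a graded subring containing $R=I_0$, we get $I_0^{\mathrm{alg}}=R$. We also get $I_m^{\mathrm{alg}}I_n^{\mathrm{alg}}t^{m+n}\subseteq I_{m+n}^{\mathrm{alg}}t^{m+n}$, which is the graded condition. For the filtration property, we argue as follows.
\begin{itemize}
\item[] Let $ft^{n+1}$ be integral over $\mathscr R(\mathcal I)$, with $n\geq1$. We aim to show $ft^n$ is integral. Take an equation $(ft^{n+1})^k+\sum_j a_jt^{j(n+1)}(ft^{n+1})^{k-j}=0$ with $a_j\in I_{j(n+1)}$, using a homogeneous equation.
\item[] Reading it in $R$, we get $f^k+\sum a_jf^{k-j}=0$ with $a_j\in I_{j(n+1)}\subseteq I_{jn}$.
\item[] That last inclusion uses the decreasing hypothesis, iterated; this is valid also for $j(n+1)\geq1$, and $jn\geq1$ since $n\geq1$.
\end{itemize}
Hence $ft^n$ satisfies the corresponding homogeneous equation of degree $n$ weights, so $f\in I_n^{\mathrm{alg}}$. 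The same degree-reading argument gives $f\in\overline{I_{jn}}$-type relations for properness. If $f\in I_n^{\mathrm{alg}}$, the equation $f^k+\sum a_jf^{k-j}=0$ with $a_j\in I_{jn}\subseteq I_1\subseteq\mathfrak m$ forces $f^k\in\mathfrak m$. Hence $f\in\mathfrak m$, so $I_n^{\mathrm{alg}}\subseteq\mathfrak m$ is proper.

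The main point to watch is that the statement asserts $\mathcal I^{\mathrm{alg}}$ is a filtration without explicitly assuming $\mathcal I$ is one. For a non-decreasing $\mathcal I$ this can fail; for example, Example~\ref{ex:periodic-zero} has $I_1^{\mathrm{alg}}=0$ but $I_2^{\mathrm{alg}}\ni x^2$. I would therefore read the statement in the section's standing context of filtrations and state this hypothesis explicitly. Properness, however, holds in general, since $a_j\in I_{jn}\subseteq\sqrt{I_{jn}}\subseteq\mathfrak m$.

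\emph{Inclusions and the adic case.} The inclusion $\mathcal I\subseteq\mathcal I^{\mathrm{term}}$ is trivial. For $\mathcal I^{\mathrm{term}}\subseteq\mathcal I^{\mathrm{alg}}$, take $f\in\overline{I_n}$. An equation $f^k+\sum b_jf^{k-j}=0$ with $b_j\in I_n^j\subseteq I_{jn}$ gives $(ft^n)^k+\sum (b_jt^{jn})(ft^n)^{k-j}=0$ with coefficients in $\mathscr R(\mathcal I)$. In the adic case, it is the classical fact \cite{HunekeSwanson} that the integral closure of $R[It]$ in $R[t]$ is $\bigoplus\overline{I^n}t^n$. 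This gives both equalities.
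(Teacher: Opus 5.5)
Your proposal has a genuine gap in the decreasing property of $\mathcal I^{\mathrm{alg}}$. Your argument uses $I_{j(n+1)}\subseteq I_{jn}$, so it only covers the case where $\mathcal I$ is itself a filtration. You then assert that the statement fails in general, but that assertion is false and your counterexample is miscomputed.

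In Example~\ref{ex:periodic-zero} one has $\mathscr R(\mathcal I)=R[x^2t^2]$, and $xt$ is a root of the monic polynomial $T^2-x^2t^2$. Hence $x\in I_1^{\mathrm{alg}}$, and in fact $I_1^{\mathrm{alg}}=(x)$, since it is proper by your own argument. You seem to have confused $I_1^{\mathrm{alg}}$ with $\overline{I_1}=0$. In that example it is $\mathcal I^{\mathrm{term}}$, not $\mathcal I^{\mathrm{alg}}$, that fails to be decreasing; algebraic closure fills in the missing odd degrees.

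The missing idea is a short trick that needs only the graded condition on $\mathcal I$, and it is what the paper uses. Suppose $ft^{n+1}$ is integral over $\mathscr R(\mathcal I)$. Since $f\in R=I_0\subseteq\mathscr R(\mathcal I)$, the element
\[
(ft^n)^{n+1}=f\cdot(ft^{n+1})^n
\]
is integral over $\mathscr R(\mathcal I)$. An element with an integral positive power is itself integral, by transitivity of integrality. Hence $I_{n+1}^{\mathrm{alg}}\subseteq I_n^{\mathrm{alg}}$ for every graded family, exactly as the statement asserts.

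Replace your filtration step with this argument and drop the claimed counterexample. Your remaining steps then complete the proof and match the paper's: termwise closure, the inclusion $\overline{I_n}\subseteq I_n^{\mathrm{alg}}$ via $I_n^j\subseteq I_{jn}$, properness by reducing the homogeneous equation modulo $\mathfrak m$, and the adic case.
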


\begin{proof}
The standard product formula
$\overline A\,\overline B\subseteq\overline{AB}$, together with
$I_mI_n\subseteq I_{m+n}$, gives
$\overline{I_m}\,\overline{I_n}\subseteq\overline{I_{m+n}}$.
If $\mathcal I$ is a filtration, then
$I_{n+1}\subseteq I_n$, and monotonicity of integral closure gives
$\overline{I_{n+1}}\subseteq\overline{I_n}$. Thus
$\mathcal I^{\mathrm{term}}$ is graded and, when appropriate,
decreasing.

Clearly $\mathcal I\subseteq\mathcal I^{\mathrm{term}}$. If
$f\in\overline{I_n}$, choose an equation
$f^s+a_1f^{s-1}+\cdots+a_s=0$ with $a_j\in I_n^j$. Since
$I_n^j\subseteq I_{jn}$, multiplication by $t^{ns}$ gives a monic
integral equation for $ft^n$ over $\mathscr R(\mathcal I)$. Hence
$\overline{I_n}\subseteq I_n^{\mathrm{alg}}$.
By \cite[Theorem~2.3.2]{HunekeSwanson}, the relative integral closure
of $\mathscr R(\mathcal I)$ in $R[t]$ is graded, so
$\mathcal I^{\mathrm{alg}}$ is a graded family. To see that it is
decreasing, suppose that $ft^{n+1}$ is integral over
$\mathscr R(\mathcal I)$. Then
$(ft^n)^{n+1}=f(ft^{n+1})^n$ is integral over
$\mathscr R(\mathcal I)$. Since an element whose positive power is
integral is itself integral, $ft^n$ is integral over
$\mathscr R(\mathcal I)$. Thus
$I_{n+1}^{\mathrm{alg}}\subseteq I_n^{\mathrm{alg}}$. Now let $n>0$ and $f\in I_n^{\mathrm{alg}}$. A homogeneous integral
equation for $ft^n$ has the form
\[
(ft^n)^s+a_1t^n(ft^n)^{s-1}+\cdots+a_st^{ns}=0,
\qquad
a_j\in I_{jn}.
\]
After cancelling $t^{ns}$, we obtain
$f^s+a_1f^{s-1}+\cdots+a_s=0$. Since every $I_{jn}$ is proper and
hence contained in $\mathfrak m$, reduction modulo $\mathfrak m$
gives $\overline f^{\,s}=0$ in $R/\mathfrak m$. Therefore
$f\in\mathfrak m$, so $I_n^{\mathrm{alg}}$ is proper.

If $\mathcal I=\{I^n\}$, then a homogeneous element $ft^n$
is integral over $R[It]$ if and only if $f$ is integral over $I^n$:
a homogeneous integral equation for $ft^n$ has coefficients in
$I^{jn}t^{jn}$, and cancelling the powers of $t$ gives an
integral equation over $I^n$, and conversely. Hence
$I_n^{\mathrm{alg}}=\overline{I^n}$ for every $n$.
\end{proof}

\begin{remark}
\label{rem:termwise-invariance}
Under the usual hypotheses for integral-closure invariance of the
multiplicity sequence, for instance when $R$ is equidimensional and
universally catenary, the equality
$\overline{I_n}=\overline{J_n}$ for all $n\gg0$ implies equality of
all lower and upper asymptotic components, by the corresponding
one-ideal result \cite{AM06,PTUV}. This is a termwise statement and
does not imply
$\mathcal I^{\mathrm{term}}=\mathcal I^{\mathrm{alg}}$, nor
invariance under algebraic closure for arbitrary non-Noetherian
families.
\end{remark}

\begin{proposition}
\label{prop:veronese-closure}
Let $\mathcal I$ be a Noetherian graded family and let $q$ be a
standard Veronese index. Then, for every $n\geq1$,
\begin{equation}
\label{eq:algebraic-root-criterion}
f\in I_n^{\mathrm{alg}}
\quad\Longleftrightarrow\quad
f^q\in\overline{I_q^{\,n}}.
\end{equation}
In particular,
$I_{qv}^{\mathrm{alg}}=\overline{I_{qv}}$ for every $v\geq1$.
\end{proposition}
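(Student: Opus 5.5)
The plan is to reduce everything to the Rees algebra of the single ideal $J=I_q$, where the adic case of Proposition~\ref{prop:comparison-closures} applies.

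\emph{Setup.} Put $u=t^q$ and let $B=\mathscr R(\mathcal I)^{(q)}=\bigoplus_{v}I_{qv}t^{qv}$. Because $q$ is a standard Veronese index, $I_{qv}=J^v$, so $B=R[Ju]\subseteq R[u]\subseteq R[t]$ is the ordinary Rees algebra of $J$ in the variable $u$.

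As in the proof of Theorem~\ref{thm:noetherian-convergence}, $\mathscr R(\mathcal I)$ is integral over $B$, since $z^q\in B$ for every homogeneous $z$. By transitivity of integral dependence, an element of $R[t]$ is integral over $\mathscr R(\mathcal I)$ if and only if it is integral over $B$.

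The key translation step is the following. A homogeneous element $gu^m=gt^{qm}$ of $R[u]$ is integral over $B$ if and only if $g\in\overline{J^m}$. This is exactly the adic statement in Proposition~\ref{prop:comparison-closures}, applied to $J$ with variable $u$.

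One point needs care: an integral equation over $B$ is a priori taken in $R[t]$, not in $R[u]$. To handle this, take a monic equation for $gt^{qm}$ with coefficients in $B$. Projecting each coefficient onto its appropriate graded component, which is allowed since the relative integral closure is graded by \cite[Theorem~2.3.2]{HunekeSwanson}, gives a homogeneous equation. All its terms lie in $R[u]$, so the adic characterization applies.

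\emph{Forward implication.} Let $f\in I_n^{\mathrm{alg}}$, so $ft^n$ is integral over $\mathscr R(\mathcal I)$. Then $(ft^n)^q=f^qu^n$ is integral over $\mathscr R(\mathcal I)$, hence over $B$. By the key step, $f^q\in\overline{J^n}=\overline{I_q^{\,n}}$.

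\emph{Reverse implication.} Suppose $f^q\in\overline{I_q^{\,n}}$. By the key step, $f^qu^n=(ft^n)^q$ is integral over $B\subseteq\mathscr R(\mathcal I)$. Now $ft^n$ is a root of the monic polynomial $T^q-(ft^n)^q$, so it is integral over $\mathscr R(\mathcal I)[(ft^n)^q]$. That ring is integral over $\mathscr R(\mathcal I)$, so transitivity gives $f\in I_n^{\mathrm{alg}}$.

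\emph{The particular case.} The inclusion $\overline{I_{qv}}\subseteq I_{qv}^{\mathrm{alg}}$ is already part of Proposition~\ref{prop:comparison-closures}. For the reverse inclusion, let $f\in I_{qv}^{\mathrm{alg}}$. Then $ft^{qv}=fu^v$ is integral over $\mathscr R(\mathcal I)$, hence over $B$. By the key step with $m=v$, $f\in\overline{J^v}=\overline{I_{qv}}$. This avoids any separate root-extraction argument for integral closures of powers.

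\emph{Expected obstacle.} The only delicate point is the one flagged in the key step: reducing an integral equation over $B$ that lives in $R[t]$ to a homogeneous one inside $R[u]$, so that the Rees-algebra characterization of $\overline{J^m}$ can be invoked. I would handle it by the graded-component projection described there.
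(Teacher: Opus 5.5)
Your proposal is correct and follows the paper's proof essentially step for step. Like the paper, you pass to the standard Veronese $R[I_qt^q]$, which has the same integral closure in $R[t]$ as $\mathscr R(\mathcal I)$, apply the adic case of Proposition~\ref{prop:comparison-closures} with variable $t^q$, and use the root of $T^q-(ft^n)^q$ together with transitivity for the converse. The one small difference is in the particular case: you apply the Rees-algebra criterion directly to $ft^{qv}=f(t^q)^v$, whereas the paper goes through $f^q\in\overline{I_q^{\,qv}}$ and the power criterion, so your route is slightly more direct. The ``delicate point'' you flag is harmless, since $B\subseteq R[u]$ already; the homogeneous projection needs only that $B$ is graded, not \cite[Theorem~2.3.2]{HunekeSwanson}.
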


\begin{proof}
Set $A=\mathscr R(\mathcal I)$ and
$C=A^{(q)}=R[I_qt^q]$. Since $A$ is integral over its Veronese
subalgebra $C$, the two algebras have the same integral closure
relative to $R[t]$. If $f\in I_n^{\mathrm{alg}}$, then $ft^n$ is integral over $C$, and
hence so is
$(ft^n)^q=f^qt^{qn}$. Viewing $C=R[I_qt^q]$ as the Rees algebra of
$I_q$ with variable $t^q$, Proposition~\ref{prop:comparison-closures}
gives $f^q\in\overline{I_q^n}$.

Conversely, if $f^q\in\overline{I_q^n}$, then
$(ft^n)^q$ is integral over $C$. Hence $ft^n$ is integral over
$C[(ft^n)^q]$, which is integral over $C$; by transitivity,
$ft^n$ is integral over $C$. Thus $f\in I_n^{\mathrm{alg}}$.
Taking $n=qv$ in
\eqref{eq:algebraic-root-criterion} gives
$f\in I_{qv}^{\mathrm{alg}}$ if and only if
$f^q\in\overline{I_q^{qv}}$. By the power criterion for integral
closure, this is equivalent to
$f\in\overline{I_q^v}$. Since $q$ is a standard Veronese index,
$I_{qv}=I_q^v$, and therefore
$I_{qv}^{\mathrm{alg}}=\overline{I_{qv}}$.
\end{proof}

Note that the finite generation allows us to choose a common standard Veronese index
for finitely many family algebras. This reduces integral dependence
between Noetherian filtrations to the corresponding one-ideal
criterion of Polini--Trung--Ulrich--Validashti.

\begin{theorem}
\label{thm:family-Rees-criterion}
Let $R$ be equidimensional and universally catenary, and let
$\I\subseteq\J$ be Noetherian graded filtrations of proper ideals.
The following are equivalent:
\begin{enumerate}[label=\textup{(\roman*)}]
\item $\mathscr R(\J)$ is integral over $\mathscr R(\I)$;
\item $c_i(\I;R)\leq c_i(\J;R)$ for every $0\leq i\leq d$;
\item $\cvec(\I;R)=\cvec(\J;R)$;
\item $\overline{\mathscr R(\I)}^{\,R[t]}
       =\overline{\mathscr R(\J)}^{\,R[t]}$.
\end{enumerate}
\end{theorem}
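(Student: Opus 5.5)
Choose a common standard Veronese index $q\geq1$ for both algebras. By \cite[Lemma~10.56.2]{Stacks}, each of $\mathscr R(\I)$ and $\mathscr R(\J)$ has some standard Veronese index, and any multiple of a standard index is again standard. Taking $q$ to be the product (or lcm) of the two indices gives
\[
I_{qv}=I_q^v,\qquad J_{qv}=J_q^v\qquad(v\geq0).
\]
Set $I=I_q$ and $J=J_q$, so that $I\subseteq J$. By Theorem~\ref{thm:noetherian-convergence},
\[
c_i(\I;R)=\frac{c_i(I;R)}{q^i},\qquad c_i(\J;R)=\frac{c_i(J;R)}{q^i}
\]
for every $i$. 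Hence (ii) and (iii) for the families are equivalent to the same statements for the pair of ideals $I\subseteq J$. By the criterion of \cite{PTUV} for comparable ideals in an equidimensional universally catenary ring, $c_i(I;R)\leq c_i(J;R)$ for all $i$ forces $J\subseteq\overline I$. The inclusion $J\subseteq\overline I$ in turn gives $\overline I=\overline J$, and integral-closure invariance of the multiplicity sequence then gives $\cvec(I;R)=\cvec(J;R)$.

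I would prove the implications in the following order.

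\emph{(i)$\Leftrightarrow$(iv).} Since $\mathscr R(\I)\subseteq\mathscr R(\J)\subseteq R[t]$, integrality of $\mathscr R(\J)$ over $\mathscr R(\I)$ gives equal integral closures in $R[t]$ by transitivity. Conversely, if the closures agree, then $\mathscr R(\J)$ lies in the closure of $\mathscr R(\I)$, which is (i).

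\emph{(i)$\Leftrightarrow$ $J\subseteq\overline I$.} As in Proposition~\ref{prop:veronese-closure}, each family algebra is integral over its Veronese subalgebra, namely $R[It^q]$ and $R[Jt^q]$ respectively. So (i) holds if and only if $R[Jt^q]$ is integral over $R[It^q]$. Viewing these as Rees algebras in the variable $t^q$, Proposition~\ref{prop:comparison-closures} shows this happens if and only if $J\subseteq\overline I$. More precisely, the element $ft^q$ with $f\in J$ is integral if and only if $f\in\overline I$. If $J\subseteq\overline I$, then every generator $Jt^q$ is integral, so the whole algebra is integral. Conversely, for any $n$ and $f\in J_n$, the element $(ft^n)^q=f^qt^{qn}$ lies in $R[Jt^q]$ and is therefore integral over $R[It^q]$. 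Hence $ft^n$ is integral over $R[It^q]$, so $\mathscr R(\J)$ is integral over $\mathscr R(\I)$.

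\emph{Assembling the chain.} (iii)$\Rightarrow$(ii) is trivial. For (ii)$\Rightarrow J\subseteq\overline I$, multiply by $q^i$ and apply the PTUV criterion to $I\subseteq J$. For $J\subseteq\overline I\Rightarrow$(iii), use integral-closure invariance of the multiplicity sequence and divide by $q^i$.

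The step I expect to be the main obstacle is the exact form of the PTUV criterion. Their theorem is stated as: for $I\subseteq J$, $J$ is integral over $I$ if and only if $c_i(I)=c_i(J)$ for all $i$, and I want the weaker inequality form $c_i(I)\leq c_i(J)$. I would first check that PTUV's proof, or their ``$\leq$'' version, gives integrality from the inequalities. If only equality is available, I would instead derive the reverse inequality $c_i(I)\geq c_i(J)$ separately. This is not a general monotonicity, because multiplicity sequences are not monotone, so I would look for it in PTUV's lexicographic or first-differing-index statement, which I believe gives precisely that the sequences cannot satisfy componentwise $\leq$ with strict inequality somewhere unless $J\subseteq\overline I$.

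A secondary point is the $i=0$ component. Since the families are filtrations, Proposition~\ref{prop:radicals}(iv) handles it: $c_0(\I;R)=c_0(I;R)$ and $c_0(\J;R)=c_0(J;R)$ with no division needed.
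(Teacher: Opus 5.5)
Your proposal is correct and follows essentially the same route as the paper. Both arguments take a common standard Veronese index $q$ and reduce (i) to $J_q\subseteq\overline{I_q}$ using integrality over the Veronese subalgebras $R[I_qt^q]\subseteq R[J_qt^q]$. Both then use Theorem~\ref{thm:noetherian-convergence} to pass between family and ideal components, apply the one-ideal criterion of \cite{PTUV}, and obtain (i)$\Leftrightarrow$(iv) by transitivity of integrality. The step you flag as the main obstacle is handled in the paper by citing \cite[Theorem~4.2]{PTUV} directly, as giving the equivalence of $J_q\subseteq\overline{I_q}$ with both the componentwise inequalities and the equalities, so your lexicographic fallback is not needed.
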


\begin{proof}
Choose $q\geq1$ which is a standard Veronese index for both
$\mathscr R(\mathcal I)$ and $\mathscr R(\mathcal J)$, and set
$A=\mathscr R(\mathcal I)$ and $B=\mathscr R(\mathcal J)$. Then $A^{(q)}=R[I_qt^q]$, and  $B^{(q)}=R[J_qt^q].$
Both $A$ and $B$ are integral over their $q$th Veronese subalgebras.
If $B$ is integral over $A$, then $B$ is finite over $A$ and hence
over $A^{(q)}$; since $A^{(q)}$ is Noetherian,
$B^{(q)}$ is finite, and therefore integral, over $A^{(q)}$.
Conversely, if $B^{(q)}$ is integral over $A^{(q)}$, then the
integrality of $B$ over $B^{(q)}$ shows that $B$ is integral over
$A^{(q)}$, and hence over $A$. Thus $B$  is integral over $A
$ if and only if
$B^{(q)}$ is integral over $A^{(q)}$.
Since these are the Rees algebras of $I_q$ and $J_q$ with variable
$t^q$, this is equivalent to
$J_q\subseteq\overline{I_q}$.

By \cite[Theorem~4.2]{PTUV}, the latter condition is equivalent both
to
$c_i(I_q;R)\leq c_i(J_q;R)$ for every $0\leq i\leq d$
and to
$c_i(I_q;R)=c_i(J_q;R)$ for every $0\leq i\leq d$.
Theorem~\ref{thm:noetherian-convergence} gives
\[
c_i(\mathcal I;R)=\frac{c_i(I_q;R)}{q^i},
\qquad
c_i(\mathcal J;R)=\frac{c_i(J_q;R)}{q^i},
\]
so \textup{(i)}, \textup{(ii)}, and \textup{(iii)} are equivalent. Hence, for $A\subseteq B\subseteq R[t]$, the algebra $B$ is
integral over $A$ if and only if their integral closures relative to
$R[t]$ coincide. Hence \textup{(i)} is equivalent to \textup{(iv)}.
\end{proof}

\subsection*{Divisorial saturation}

Throughout this subsection, let $R$ be an excellent local domain.
Write $\operatorname{Div}(R)$ for the normalized divisorial valuations
of $\operatorname{Frac}(R)$ that are nonnegative on $R$. Excellence
ensures that the normalized blowups are Noetherian and that the Rees
valuations of ideals belong to this class; see
\cite[Chapter~10]{HunekeSwanson}. The center of
$v\in\operatorname{Div}(R)$ is
$\mathfrak c_R(v)=\{f\in R:v(f)>0\}$, which need not be $\mathfrak m$. For a graded family $\mathcal I$ of nonzero proper ideals, set
\[
v(\mathcal I)
=
\inf_{n\geq1}\frac{v(I_n)}n
=
\lim_{n\to\infty}\frac{v(I_n)}n,
\qquad
v(I_n)=\min\{v(f):0\neq f\in I_n\}.
\]
The limit exists by Fekete's lemma, since
$v(I_{m+n})\leq v(I_m)+v(I_n)$. Define
$\widetilde I^{\,\mathrm{div}}_0=R$ and, for $n\geq1$,
\begin{equation}
\label{eq:div-saturation}
\widetilde I^{\,\mathrm{div}}_n
=
\{f\in\mathfrak m:
v(f)\geq n\,v(\mathcal I)
\text{ for every }v\in\operatorname{Div}(R)\}.
\end{equation}
The condition $f\in\mathfrak m$ ensures that the positive-degree
terms remain proper even when all asymptotic valuation orders vanish.

\begin{proposition}
\label{prop:noeth-div-alg}
The family $\widetilde{\mathcal I}^{\,\mathrm{div}}$ is a graded
filtration containing $\mathcal I^{\mathrm{alg}}$, and its family
algebra is integrally closed relative to $R[t]$. If $\mathcal I$ is
Noetherian, then
\[
\mathcal I
\subseteq
\mathcal I^{\mathrm{term}}
\subseteq
\mathcal I^{\mathrm{alg}}
=
\widetilde{\mathcal I}^{\,\mathrm{div}}.
\]
\end{proposition}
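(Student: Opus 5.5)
The plan is to check the three assertions directly from the definition \eqref{eq:div-saturation}, using one standard valuation estimate. For the Noetherian equality I will reduce to the valuative criterion for integral closure of the single ideal $I_q$, where $q$ is a standard Veronese index. Proposition~\ref{prop:veronese-closure} then converts this back into membership in $I_n^{\mathrm{alg}}$.

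\emph{Graded filtration.} Let $f\in\widetilde I^{\,\mathrm{div}}_m$ and $g\in\widetilde I^{\,\mathrm{div}}_n$. Then $fg\in\mathfrak m$, and for every $v\in\operatorname{Div}(R)$ we have $v(fg)=v(f)+v(g)\geq(m+n)v(\mathcal I)$, so the family is graded. Since $v(\mathcal I)\geq0$, the condition $v(f)\geq(n+1)v(\mathcal I)$ implies $v(f)\geq nv(\mathcal I)$, so the family is decreasing. The requirement $f\in\mathfrak m$ makes every term with $n\geq1$ proper.

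\emph{Containing $\mathcal I^{\mathrm{alg}}$ and integral closedness.} Both use the same argument. Suppose $ft^n$, with $n\geq1$, satisfies a homogeneous integral equation $f^s+a_1f^{s-1}+\cdots+a_s=0$ whose coefficients satisfy $v(a_j)\geq jn\,v(\mathcal I)$ for every $v$. If $v(f)<n\,v(\mathcal I)$, then $v(f^s)<v(a_jf^{s-j})$ for all $j$, which is impossible. Hence $v(f)\geq n\,v(\mathcal I)$. The argument in Proposition~\ref{prop:comparison-closures} (reduction modulo $\mathfrak m$) gives $f\in\mathfrak m$, because all coefficients lie in proper ideals.

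For $\mathcal I^{\mathrm{alg}}$ the coefficients are $a_j\in I_{jn}$. Since $v(\mathcal I)$ is an infimum, $v(a_j)\geq v(I_{jn})\geq jn\,v(\mathcal I)$. For integral closedness, take $ft^n$ integral over $\bigoplus\widetilde I^{\,\mathrm{div}}_nt^n$. After grading the equation by \cite[Theorem~2.3.2]{HunekeSwanson}, its coefficients lie in $\widetilde I^{\,\mathrm{div}}_{jn}$ and satisfy the same bound by definition.

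\emph{Noetherian equality.} Let $q$ satisfy $I_{qk}=I_q^k$, so that $v(I_{qk})=k\,v(I_q)$. Computing the limit along this subsequence gives $v(\mathcal I)=v(I_q)/q$. Take $f\in\widetilde I^{\,\mathrm{div}}_n$. Then for every $v\in\operatorname{Div}(R)$,
\[
v(f^q)=q\,v(f)\geq n\,v(I_q)=v(I_q^{\,n}).
\]
Since $R$ is an excellent domain and $I_q\neq0$, the Rees valuations of $I_q^n$ lie in $\operatorname{Div}(R)$. The valuative criterion for integral closure \cite[Chapter~10]{HunekeSwanson} then yields $f^q\in\overline{I_q^{\,n}}$, and Proposition~\ref{prop:veronese-closure} gives $f\in I_n^{\mathrm{alg}}$.

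The reverse inclusion and the inclusions $\mathcal I\subseteq\mathcal I^{\mathrm{term}}\subseteq\mathcal I^{\mathrm{alg}}$ come from the previous step and Proposition~\ref{prop:comparison-closures}.

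The main obstacle is this last step. It requires that the Rees valuations of $I_q^n$ are divisorial, nonnegative on $R$, and belong to the normalized class $\operatorname{Div}(R)$, even though their centers need not be $\mathfrak m$. This is exactly where excellence (analytic unramifiedness of the relevant localizations) is used. I would verify it first, citing the Rees valuation theory in \cite[Chapter~10]{HunekeSwanson}.
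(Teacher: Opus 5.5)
Your proposal is correct and follows essentially the paper's proof: the same dominant-term valuation argument on a homogeneous integral equation, and for the Noetherian case the same chain $v(\mathcal I)=v(I_q)/q$, then the Rees-valuation criterion giving $f^q\in\overline{I_q^{\,n}}$, then Proposition~\ref{prop:veronese-closure}. The only difference is cosmetic: you run the valuation argument directly on integral equations over $\mathscr R(\mathcal I)$ to obtain $\mathcal I^{\mathrm{alg}}\subseteq\widetilde{\mathcal I}^{\,\mathrm{div}}$, whereas the paper deduces this containment from the integral closedness of $\mathscr R(\widetilde{\mathcal I}^{\,\mathrm{div}})$ in $R[t]$ together with $\mathscr R(\mathcal I)\subseteq\mathscr R(\widetilde{\mathcal I}^{\,\mathrm{div}})$.
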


\begin{proof}
The defining valuation inequalities immediately give
$\widetilde I_m^{\,\mathrm{div}}
\widetilde I_n^{\,\mathrm{div}}
\subseteq
\widetilde I_{m+n}^{\,\mathrm{div}}$
and
$\widetilde I_{n+1}^{\,\mathrm{div}}
\subseteq
\widetilde I_n^{\,\mathrm{div}}$.
Moreover, if $f\in I_n$, then
$v(f)\geq v(I_n)\geq n\,v(\mathcal I)$ for every
$v\in\operatorname{Div}(R)$; since $I_n$ is proper,
$f\in\mathfrak m$. Hence
$\mathcal I\subseteq\widetilde{\mathcal I}^{\,\mathrm{div}}$.
Set
$S=\mathscr R(\widetilde{\mathcal I}^{\,\mathrm{div}})$.
To prove that $S$ is integrally closed relative to $R[t]$, it is
enough to consider a homogeneous element $ft^n\in R[t]$ integral
over $S$. For $n>0$, a homogeneous monic equation gives
\[
f^r+a_1f^{r-1}+\cdots+a_r=0,
\qquad
a_j\in\widetilde I_{jn}^{\,\mathrm{div}}.
\]
Since $a_j\in\mathfrak m$, reduction modulo $\mathfrak m$ gives
$f\in\mathfrak m$. Suppose $f\neq0$ and that, for some
$v\in\operatorname{Div}(R)$,
$v(f)<n\,v(\mathcal I)$. Then, for every $1\leq j\leq r$,
\[
v(a_jf^{r-j})
\geq
jn\,v(\mathcal I)+(r-j)v(f)
>
rv(f).
\]
Thus $f^r$ is the unique term of minimal $v$-value in the integral
relation, which is impossible. Hence
$v(f)\geq n\,v(\mathcal I)$ for every divisorial valuation $v$, so
$f\in\widetilde I_n^{\,\mathrm{div}}$. Therefore $S$ is integrally
closed relative to $R[t]$. Since $S$ contains
$\mathscr R(\mathcal I)$, it also contains its relative integral
closure, and consequently
$\mathcal I^{\mathrm{alg}}
\subseteq\widetilde{\mathcal I}^{\,\mathrm{div}}$.

Now, assume that $\mathcal I$ is Noetherian and choose a standard
Veronese index $q$. Then $I_{qv}=I_q^v$ for every $v\geq0$, and hence,
for every $v\in\operatorname{Div}(R)$,
\[
v(\mathcal I)
=
\lim_{m\to\infty}\frac{v(I_m)}m
=
\frac{v(I_q)}q.
\]
If $f\in\widetilde I_n^{\,\mathrm{div}}$, then
$qv(f)\geq n\,v(I_q)$ for every $v\in\operatorname{Div}(R)$, and in
particular for every Rees valuation of $I_q$. By the valuative
criterion for integral closure,
$f^q\in\overline{I_q^n}$. Proposition~\ref{prop:veronese-closure}
then gives $f\in I_n^{\mathrm{alg}}$. Thus
$\widetilde I_n^{\,\mathrm{div}}\subseteq I_n^{\mathrm{alg}}$ for
every $n$, and the reverse inclusion was proved above. The remaining
inclusions follow from Proposition~\ref{prop:comparison-closures}.
\end{proof}

\begin{corollary}
\label{thm:noeth-valuative-rees}
Let $\mathcal I\subseteq\mathcal J$ be Noetherian graded filtrations
of nonzero proper ideals in an excellent local domain $R$. Then
$\cvec(\mathcal I;R)=\cvec(\mathcal J;R)$ if and only if
$\widetilde{\mathcal I}^{\,\mathrm{div}}
=
\widetilde{\mathcal J}^{\,\mathrm{div}}$.
\end{corollary}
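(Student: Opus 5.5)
The corollary should follow by chaining the family Rees criterion, Theorem~\ref{thm:family-Rees-criterion}, with the identification of algebraic closure and divisorial saturation in Proposition~\ref{prop:noeth-div-alg}. No new multiplicity computation should be needed.

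First I will check that Theorem~\ref{thm:family-Rees-criterion} applies. An excellent local ring is universally catenary, since excellence includes universal catenarity. A local domain has a unique minimal prime, namely $(0)$, so it is equidimensional. By hypothesis, $\mathcal I\subseteq\mathcal J$ are Noetherian graded filtrations of proper ideals. The theorem therefore gives
\[
\cvec(\mathcal I;R)=\cvec(\mathcal J;R)
\iff
\overline{\mathscr R(\mathcal I)}^{\,R[t]}=\overline{\mathscr R(\mathcal J)}^{\,R[t]},
\]
which is the equivalence (iii)$\Leftrightarrow$(iv). The relative integral closure is graded, with degree-$n$ component $I_n^{\mathrm{alg}}t^n$ (resp.\ $J_n^{\mathrm{alg}}t^n$). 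So the right-hand side is equivalent to $I_n^{\mathrm{alg}}=J_n^{\mathrm{alg}}$ for every $n\geq0$, that is, to $\mathcal I^{\mathrm{alg}}=\mathcal J^{\mathrm{alg}}$ as families.

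Next I will apply Proposition~\ref{prop:noeth-div-alg} to each family. Since $R$ is an excellent local domain and the ideals are nonzero and proper, the quantities $v(I_n)$, $v(\mathcal I)$ and the saturations $\widetilde{\mathcal I}^{\,\mathrm{div}}$, $\widetilde{\mathcal J}^{\,\mathrm{div}}$ are all defined. Because both families are Noetherian, the proposition gives $\mathcal I^{\mathrm{alg}}=\widetilde{\mathcal I}^{\,\mathrm{div}}$ and $\mathcal J^{\mathrm{alg}}=\widetilde{\mathcal J}^{\,\mathrm{div}}$ degreewise. Substituting these into the equivalence above gives exactly the statement of the corollary.

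The only real obstacle is bookkeeping of hypotheses. Theorem~\ref{thm:family-Rees-criterion} requires equidimensionality and universal catenarity, while the divisorial subsection assumes excellence and a domain, so I must confirm that the latter implies the former. I must also make sure the degree-zero components agree; this holds because both are $R$ on each side. Finally, the nesting $\mathcal I\subseteq\mathcal J$ is used only to invoke Theorem~\ref{thm:family-Rees-criterion}.
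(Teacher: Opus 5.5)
Your proposal is correct and follows the paper's proof essentially verbatim. The paper also observes that a domain is equidimensional and that excellence gives universal catenarity, applies Theorem~\ref{thm:family-Rees-criterion} to reduce equality of the asymptotic sequences to equality of the relative integral closures in $R[t]$, identifies their graded components as $\mathcal I^{\mathrm{alg}}$ and $\mathcal J^{\mathrm{alg}}$, and then uses Proposition~\ref{prop:noeth-div-alg} to replace these by the divisorial saturations; your extra check of the degree-zero components is harmless bookkeeping that the paper leaves implicit.
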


\begin{proof}
Since $R$ is a domain, it is equidimensional, and excellence implies
that it is universally catenary. Hence
Theorem~\ref{thm:family-Rees-criterion} gives
$\cvec(\mathcal I;R)=\cvec(\mathcal J;R)$ if and only if $\overline{\mathscr R(\mathcal I)}^{\,R[t]}
=
\overline{\mathscr R(\mathcal J)}^{\,R[t]}.$
By definition, the homogeneous components of these relative integral
closures are $\mathcal I^{\mathrm{alg}}$ and
$\mathcal J^{\mathrm{alg}}$, respectively. Since both filtrations are
Noetherian, Proposition~\ref{prop:noeth-div-alg} gives
$\mathcal I^{\mathrm{alg}}
=\widetilde{\mathcal I}^{\,\mathrm{div}}$ and
$\mathcal J^{\mathrm{alg}}
=\widetilde{\mathcal J}^{\,\mathrm{div}}$.
The assertion follows.
\end{proof}

\begin{remark}
The equality
$\mathcal I^{\mathrm{alg}}=\widetilde{\mathcal I}^{\,\mathrm{div}}$
may fail without the Noetherian hypothesis. Indeed, let
$R=k[[x]]$ and $I_n=(x^{n+1})$ for $n\geq1$. Then
$v(\mathcal I)=v(x)$, so
$\widetilde I_n^{\,\mathrm{div}}=(x^n)$. However,
$x\notin I_1^{\mathrm{alg}}$. Otherwise $xt$ would satisfy a
homogeneous monic equation over $\mathscr R(\mathcal I)$, which,
after cancelling the appropriate power of $t$, would have the form $x^r+a_1x^{r-1}+\cdots+a_r=0$, where
 $a_j\in(x^{j+1})$.
The first term has $x$-order $r$, while every other term has
$x$-order at least $r+1$, a contradiction. Thus
$I_1^{\mathrm{alg}}\subsetneq
\widetilde I_1^{\,\mathrm{div}}$.
\end{remark}

\subsection*{Comparison with the primary case}

For an arbitrary Noetherian local ring, let $\mathcal V(R)$ denote
Cutkosky's set of $\mathfrak m$-valuations \cite{Cut}. For a graded
$\mathfrak m$-primary family $\mathcal I$, set
\[
\widetilde I_n^{\,\mathrm{Cut}}
=
\{f\in\mathfrak m:
v(f)\geq n\,v(\mathcal I)
\text{ for every }v\in\mathcal V(R)\}.
\]
In the excellent local-domain case, this agrees with the divisorial
saturation of the preceding subsection: divisorial valuations whose
center is not $\mathfrak m$ have $v(I_1)=0$ and impose no condition.

If $\mathcal I\subseteq\mathcal J$ are graded
$\mathfrak m$-primary families, then their asymptotic multiplicity
sequences are
$(0,\ldots,0,e(\mathcal I;R))$ and
$(0,\ldots,0,e(\mathcal J;R))$. Hence Cutkosky's asymptotic Rees
theorem \cite[Theorems~7.3--7.5]{Cut} gives
\[
\cvec(\mathcal I;R)=\cvec(\mathcal J;R)
\quad\Longleftrightarrow\quad
v(\mathcal I)=v(\mathcal J)
\ \text{for all }v\in\mathcal V(R)
\quad\Longleftrightarrow\quad
\widetilde{\mathcal I}^{\,\mathrm{Cut}}
=
\widetilde{\mathcal J}^{\,\mathrm{Cut}}.
\]
Thus, in the primary case, the valuative criterion is already
contained in Cutkosky's theory and requires no finite generation of
the family algebras.

\section{Diagonal mixed multiplicity sequences}
\label{sec:mixed-noetherian-filtrations}

Throughout this section, $(R,\mathfrak m)$ is a Noetherian local ring
of dimension $d\geq1$. We begin with the mixed multiplicity sequence
of Callejas--Bedregal and Jorge P\'erez \cite{CBJP}.

\subsection*{The fixed-ideal construction}

Let $\mathbf J=(J_1,\ldots,J_s)$, write
$J^{\mathbf u}=J_1^{u_1}\cdots J_s^{u_s}$ and
$\mathbf1=(1,\ldots,1)$, and set
$B=R[J_1T_1,\ldots,J_sT_s]$. The diagonal multiform algebra is
\[
\mathscr S^\Delta(\mathbf J)
=
B/(J_1\cdots J_s)B
=
\bigoplus_{\mathbf u\in\mathbb N^s}
\frac{J^{\mathbf u}}{J^{\mathbf u+\mathbf1}}.
\]
Its $\mathfrak m$-associated graded algebra has components
\[
\bigl[\operatorname{gr}_{\mathfrak m}
\mathscr S^\Delta(\mathbf J)\bigr]_{r,\mathbf u}
=
\frac{\mathfrak m^rJ^{\mathbf u}+J^{\mathbf u+\mathbf1}}
{\mathfrak m^{r+1}J^{\mathbf u}+J^{\mathbf u+\mathbf1}}.
\]
Summing in the $\mathfrak m$-direction gives
\begin{equation}
\label{eq:diagonal-mixed-Hilbert-function}
H^\Delta_{\mathbf J}(r,\mathbf u)
=
\lambda_R\!\left(
\frac{J^{\mathbf u}}
{\mathfrak m^{r+1}J^{\mathbf u}+J^{\mathbf u+\mathbf1}}
\right).
\end{equation}
By the multigraded Hilbert-polynomial construction of
\cite[Sections~2 and~5]{CBJP}, this function agrees for large
$(r,\mathbf u)$ with a polynomial of total degree at most $d-1$.
We write its homogeneous part of degree $d-1$ as
\begin{equation}
\label{eq:diagonal-mixed-Hilbert-polynomial}
\sum_{i=1}^d\ \sum_{|\boldsymbol\alpha|=i-1}
\frac{
c^\Delta_{i,\boldsymbol\alpha}
(J_1|\cdots|J_s;R)}
{(d-i)!\boldsymbol\alpha!}
r^{d-i}\mathbf u^{\boldsymbol\alpha},
\end{equation}
where
$|\boldsymbol\alpha|=\sum_j\alpha_j$ and
$\boldsymbol\alpha!=\prod_j\alpha_j!$.

\begin{definition}
\label{def:diagonal-mixed-sequence}
The coefficients in
\eqref{eq:diagonal-mixed-Hilbert-polynomial} are the
\emph{diagonal mixed multiplicity-sequence components}. In the
notation of \cite[Definition~5.1]{CBJP}, 
$c^\Delta_{i,\boldsymbol\alpha}
=
c^{\mathrm{CBJP}}_{d-i,\boldsymbol\alpha}.$
\end{definition}

Thus only the indexing and the superscript $\Delta$ are new. When
$s=1$, one has
$\mathscr S^\Delta(J_1)=\operatorname{gr}_{J_1}(R)$ and
$c^\Delta_{i,(i-1)}(J_1;R)=c_i(J_1;R)$. The main identity we use is
\cite[Theorem~5.3]{CBJP}, written in codimension indexing:
\begin{equation}
\label{eq:fixed-mixed-product}
c_i(J_1\cdots J_s;R)
=
\sum_{|\boldsymbol\alpha|=i-1}
\binom{i-1}{\boldsymbol\alpha}
c^\Delta_{i,\boldsymbol\alpha}
(J_1|\cdots|J_s;R),
\qquad 1\leq i\leq d.
\end{equation}
Applied to $J_1^{a_1},\ldots,J_s^{a_s}$, it gives
\begin{equation}
\label{eq:powered-mixed-product}
c_i(J_1^{a_1}\cdots J_s^{a_s};R)
=
\sum_{|\boldsymbol\alpha|=i-1}
\binom{i-1}{\boldsymbol\alpha}
c^\Delta_{i,\boldsymbol\alpha}
(J_1^{a_1}|\cdots|J_s^{a_s};R).
\end{equation}
At this stage, the powers on the right should be retained; no separate
monomial scaling law for the diagonal coefficients is being assumed. Indeed, if $K=J_1\cdots J_s$, then
$H_K(r,v)=H^\Delta_{\mathbf J}(r,v\mathbf1)$. Comparing the
coefficients of $r^{d-i}v^{i-1}$ in the corresponding top
homogeneous parts gives \eqref{eq:fixed-mixed-product}; the same
argument applied to the powered tuple gives
\eqref{eq:powered-mixed-product}.

\begin{lemma}
\label{lem:mixed-homogeneity}
For $a\geq1$ and $|\boldsymbol\alpha|=i-1$,
\[
c^\Delta_{i,\boldsymbol\alpha}
(J_1^a|\cdots|J_s^a;R)
=
a^i
c^\Delta_{i,\boldsymbol\alpha}
(J_1|\cdots|J_s;R).
\]
\end{lemma}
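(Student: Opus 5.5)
We follow the proof of Proposition~\ref{prop:homogeneity}, now in the multigraded setting. Write $\mathbf J=(J_1,\ldots,J_s)$ and $\mathbf J^a=(J_1^a,\ldots,J_s^a)$. Then $(\mathbf J^a)^{\mathbf u}=J^{a\mathbf u}$, so
\[
H^\Delta_{\mathbf J^a}(r,\mathbf u)
=
\lambda_R\!\left(\frac{J^{a\mathbf u}}{\mathfrak m^{r+1}J^{a\mathbf u}+J^{a\mathbf u+a\mathbf1}}\right).
\]
The idea is to filter $J^{a\mathbf u}/J^{a\mathbf u+a\mathbf1}$ by diagonal steps of the original tuple. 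The chain
\[
J^{a\mathbf u}\supseteq J^{a\mathbf u+\mathbf1}\supseteq\cdots\supseteq J^{a\mathbf u+a\mathbf1}
\]
has $a$ successive quotients $J^{a\mathbf u+j\mathbf1}/J^{a\mathbf u+(j+1)\mathbf1}$, $0\leq j<a$. These are exactly the components of $\mathscr S^\Delta(\mathbf J)$ in degrees $a\mathbf u+j\mathbf1$.

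\textbf{Additivity step.} Regard $\mathscr S^\Delta(\mathbf J^a)$, and the graded pieces of this filtration, as finite multigraded modules over the Veronese-type subalgebra $R[J_1^aT_1^a,\ldots,J_s^aT_s^a]$ of $B$. Then apply the multigraded analogue of Lemma~\ref{lem:additivity}; its proof goes through verbatim, since $\mathfrak m$ has degree zero and Artin--Rees controls the $\mathfrak m$-adic error by a fixed shift in $r$. The conclusion is that the top degree-$(d-1)$ homogeneous part of the Hilbert polynomial of $H^\Delta_{\mathbf J^a}(r,\mathbf u)$ equals the top part of
\[
\sum_{j=0}^{a-1}P^\Delta_{\mathbf J}(r,a\mathbf u+j\mathbf1).
\]
Here $P^\Delta_{\mathbf J}$ is the eventual polynomial of $H^\Delta_{\mathbf J}$, which is valid here because $a\mathbf u+j\mathbf1$ is large whenever $\mathbf u$ is.

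\textbf{Coefficient extraction.} A translation by $j\mathbf1$ changes only lower-degree terms. So the degree-$(d-1)$ part is
\[
a\sum_{i,\boldsymbol\alpha}\frac{c^\Delta_{i,\boldsymbol\alpha}(\mathbf J;R)}{(d-i)!\,\boldsymbol\alpha!}\,r^{d-i}(a\mathbf u)^{\boldsymbol\alpha},
\]
and each monomial picks up the factor $a\cdot a^{|\boldsymbol\alpha|}=a^{i}$. Comparing with \eqref{eq:diagonal-mixed-Hilbert-polynomial} for $\mathbf J^a$ gives the claim.

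\textbf{Main obstacle.} The delicate point is the additivity in the multigraded setting. One has to check two things:
\begin{itemize}
\item Each filtration quotient, as $\mathbf u$ varies, forms a finite module over a Noetherian multigraded algebra, so that it has an eventual multigraded Hilbert polynomial in the sense of \cite{CBJP}.
\item The Artin--Rees comparison $\mathfrak m^{r+1}N'\subseteq N'\cap\mathfrak m^{r+1}N\subseteq\mathfrak m^{r+1-c}N'$ holds uniformly in $\mathbf u$.
\end{itemize}
For the first, I will view $M_j=\bigoplus_{\mathbf u}J^{a\mathbf u+j\mathbf1}/J^{a\mathbf u+(j+1)\mathbf1}$ as a module over $R[J_1^aT_1^a,\ldots,J_s^aT_s^a]$. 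It is generated by the finitely many components with $\mathbf u$ bounded, because $B$ is finite over this subalgebra. For the second, Artin--Rees for the finite module $\mathscr S^\Delta(\mathbf J^a)$ over the Noetherian ring $R$ (with $\mathfrak m$ extended) gives the uniform shift $c$ directly.
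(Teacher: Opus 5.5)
Your proposal is correct and follows the same route as the paper. It uses the same diagonal filtration $J^{a\mathbf u}\supseteq J^{a\mathbf u+\mathbf 1}\supseteq\cdots\supseteq J^{a\mathbf u+a\mathbf 1}$ and Artin--Rees additivity of the degree-$(d-1)$ parts, and it observes that translation by $j\mathbf 1$ only changes lower-degree terms, which yields the factor $a\cdot a^{|\boldsymbol\alpha|}=a^i$. One small correction to your last bullet: $\mathscr S^\Delta(\mathbf J^a)$ is not a finite $R$-module, so the shift $c$ that is uniform in $\mathbf u$ must come from Artin--Rees over the Noetherian algebra $R[J_1^aT_1^a,\ldots,J_s^aT_s^a]$, applied to the ideal generated by $\mathfrak m$ and to the finite graded modules of your filtration (which your first bullet already shows are finite over that algebra).
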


\begin{proof}
For each multidegree $\mathbf u$, filter
$J^{a\mathbf u}/J^{a\mathbf u+a\mathbf1}$ by $J^{a\mathbf u}
\supseteq
J^{a\mathbf u+\mathbf1}
\supseteq\cdots\supseteq
J^{a\mathbf u+a\mathbf1}.$
The successive quotients are $\frac{J^{a\mathbf u+b\mathbf1}}
     {J^{a\mathbf u+(b+1)\mathbf1}}$, for $0\leq b<a.$
For each fixed $b$, these quotients form the $a$th multiveronese
restriction of a fixed shift of
$\mathscr S^\Delta(\mathbf J)$. Let $P(r,\mathbf u)$ denote the homogeneous part of degree $d-1$ in
\eqref{eq:diagonal-mixed-Hilbert-polynomial}. The same Artin--Rees
additivity argument as in Lemma~\ref{lem:additivity}, applied in the
multigraded setting, shows that the top homogeneous part associated
with the above filtration is $\sum_{b=0}^{a-1}P(r,a\mathbf u+b\mathbf1).$
Since translation by the fixed vector $b\mathbf1$ changes only terms
of total degree strictly smaller than $d-1$, its degree-$(d-1)$ part is $
aP(r,a\mathbf u).$
Hence the coefficient of
$r^{d-i}\mathbf u^{\boldsymbol\alpha}$ is multiplied by
$a^{1+|\boldsymbol\alpha|}=a^i$, because
$|\boldsymbol\alpha|=i-1$. This proves the assertion.
\end{proof}

\subsection*{Families and the primary comparison}

Let $\mathcal I^{(1)},\ldots,\mathcal I^{(s)}$ be graded families whose
positive-degree terms have positive height. For
$|\boldsymbol\alpha|=i-1$, define
\[
\lowc^\Delta_{i,\boldsymbol\alpha}
(\mathcal I^{(1)}|\cdots|\mathcal I^{(s)};R)
=
\liminf_{n\to\infty}
\frac{
c^\Delta_{i,\boldsymbol\alpha}
(I_n^{(1)}|\cdots|I_n^{(s)};R)}
{n^i},
\]
and
\[
\upc^\Delta_{i,\boldsymbol\alpha}
(\mathcal I^{(1)}|\cdots|\mathcal I^{(s)};R)
=
\limsup_{n\to\infty}
\frac{
c^\Delta_{i,\boldsymbol\alpha}
(I_n^{(1)}|\cdots|I_n^{(s)};R)}
{n^i}.
\]
When these two values agree, their common value is denoted by
$c^\Delta_{i,\boldsymbol\alpha}
(\mathcal I^{(1)}|\cdots|\mathcal I^{(s)};R)$.
No convergence is assumed in this definition.

\begin{theorem}
\label{thm:mixed-primary}
Let $\mathcal I^{(1)},\ldots,\mathcal I^{(s)}$ be graded
$\mathfrak m$-primary families. Then all diagonal asymptotic mixed
components exist and are finite. They vanish for $i<d$, while for
$|\boldsymbol\alpha|=d-1$,
\begin{equation}
\label{eq:mixed-primary-comparison}
c^\Delta_{d,\boldsymbol\alpha}
(\mathcal I^{(1)}|\cdots|\mathcal I^{(s)};R)
=
\sum_{j=1}^s
e\bigl(
(\mathcal I^{(1)})^{[\alpha_1]},\ldots,
(\mathcal I^{(j)})^{[\alpha_j+1]},\ldots,
(\mathcal I^{(s)})^{[\alpha_s]};R
\bigr),
\end{equation}
where the terms on the right are Cutkosky's mixed multiplicities.
\end{theorem}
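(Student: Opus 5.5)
First fix $n$ and write $J_j=I^{(j)}_n$; all of these are $\mathfrak m$-primary. The plan is to compute the fixed-ideal diagonal coefficients $c^\Delta_{i,\boldsymbol\alpha}(J_1|\cdots|J_s;R)$ explicitly in terms of Rees--Teissier mixed multiplicities, and then pass to the limit using Cutkosky's theory.

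Since $J^{\mathbf u+\mathbf 1}$ is $\mathfrak m$-primary, $\mathfrak m^{r+1}J^{\mathbf u}\supseteq J^{\mathbf u+\mathbf1}$ once $r\geq c|\mathbf u|+c'$ for some constant $c$. Fix instead a region where $r\gg|\mathbf u|$. There the function \eqref{eq:diagonal-mixed-Hilbert-function} equals $\lambda(J^{\mathbf u}/J^{\mathbf u+\mathbf1})=\lambda(R/J^{\mathbf u+\mathbf 1})-\lambda(R/J^{\mathbf u})$, which is independent of $r$. The eventual polynomial is unique, so wherever it is determined by this region it has no $r$-dependence in its top part. This forces $c^\Delta_{i,\boldsymbol\alpha}=0$ for $i<d$, i.e. for the coefficients of $r^{d-i}$ with $d-i>0$.

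The step I expect to be delicate is justifying this region-based identification. One has to check that the multigraded Hilbert polynomial of \cite{CBJP} is valid on a cone that contains points with $r$ large compared with $|\mathbf u|$. If that is not automatic, I will argue directly. For $r\gg0$ the function $H^\Delta$ stabilises in $r$ for each fixed $\mathbf u$, because the $\mathfrak m$-adic filtration on a finite-length module terminates. Hence the eventual polynomial, as a polynomial in $r$ for $\mathbf u$ in the valid range, is constant in $r$.

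For the top coefficients, the Bhattacharya polynomial gives
\[
\lambda(R/J^{\mathbf u})=\sum_{|\boldsymbol\beta|=d}\frac{e(J^{[\boldsymbol\beta]};R)}{\boldsymbol\beta!}\mathbf u^{\boldsymbol\beta}+\text{(lower order)},
\]
where $e(J^{[\boldsymbol\beta]};R)=e(J_1^{[\beta_1]},\dots,J_s^{[\beta_s]};R)$. Taking the difference quotient in the direction $\mathbf 1$, the degree $d-1$ part is
\[
\sum_j\partial_{u_j}\Bigl(\sum_{\boldsymbol\beta}\frac{e(J^{[\boldsymbol\beta]})}{\boldsymbol\beta!}\mathbf u^{\boldsymbol\beta}\Bigr)=\sum_{|\boldsymbol\alpha|=d-1}\frac{\mathbf u^{\boldsymbol\alpha}}{\boldsymbol\alpha!}\sum_j e\bigl(J^{[\boldsymbol\alpha+\mathbf e_j]};R\bigr).
\]
Comparing with \eqref{eq:diagonal-mixed-Hilbert-polynomial} for $i=d$ yields
\[
c^\Delta_{d,\boldsymbol\alpha}(J_1|\cdots|J_s;R)=\sum_j e\bigl(J^{[\boldsymbol\alpha+\mathbf e_j]};R\bigr).
\]

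Finally, substitute $J_j=I^{(j)}_n$ and divide by $n^d$. Cutkosky's theorem \cite[Theorem~1.6]{Cut} says that
\[
\frac{e\bigl((I^{(1)}_n)^{[\beta_1]},\dots,(I^{(s)}_n)^{[\beta_s]};R\bigr)}{n^d}
\]
converges to the mixed multiplicity $e\bigl((\mathcal I^{(1)})^{[\beta_1]},\dots,(\mathcal I^{(s)})^{[\beta_s]};R\bigr)$ of the families; this is the definition of the latter as a limit. Summing over $j$ gives \eqref{eq:mixed-primary-comparison}, and finiteness follows. The vanishing for $i<d$ holds termwise in $n$, so the limit is $0$ there as well.
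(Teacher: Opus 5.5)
Your proposal is correct and follows the paper's proof. In both, independence of $H^\Delta_{\mathbf J}(r,\mathbf u)$ from $r$ gives the vanishing for $i<d$; the degree-$(d-1)$ part of $Q(\mathbf u+\mathbf 1)-Q(\mathbf u)$, with $Q$ the Bhattacharya polynomial, gives the fixed-ideal formula; and Cutkosky's limits for mixed multiplicities, applied termwise, finish the argument.

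The step you flag as delicate is avoided in the paper by a uniform containment. If $\mathfrak m^c\subseteq K=J_1\cdots J_s$, then $\mathfrak m^{r+1}J^{\mathbf u}\subseteq\mathfrak m^{c}J^{\mathbf u}\subseteq KJ^{\mathbf u}=J^{\mathbf u+\mathbf 1}$ for all $r\geq c-1$ and all $\mathbf u$. Your fixed-$\mathbf u$ fallback argument is also valid. Note that your displayed inclusion has the wrong direction: it should be $\subseteq$, not $\supseteq$.
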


\begin{proof}
We first consider fixed $\mathfrak m$-primary ideals
$J_1,\ldots,J_s$. Set $K=J_1\cdots J_s$. Since
$KJ^{\mathbf u}=J^{\mathbf u+\mathbf1}$ and $K$ is
$\mathfrak m$-primary, there exists $c\geq1$ such that
$\mathfrak m^c\subseteq K$. Hence, for every $\mathbf u$ and
$r\geq c-1$,
\[
H^\Delta_{\mathbf J}(r,\mathbf u)
=
\lambda_R\left(
\frac{J^{\mathbf u}}{J^{\mathbf u+\mathbf1}}
\right)
=
\lambda_R(R/J^{\mathbf u+\mathbf1})
-
\lambda_R(R/J^{\mathbf u}).
\]
Thus the eventual polynomial is independent of $r$, and therefore
$c^\Delta_{i,\boldsymbol\alpha}(J_1|\cdots|J_s;R)=0$ for every
$i<d$. Let $Q(\mathbf u)$ be the Bhattacharya polynomial of
$J_1,\ldots,J_s$. Its homogeneous part of degree $d$ is
\[
Q_d(\mathbf u)
=
\sum_{|\boldsymbol\beta|=d}
\frac{
e(J_1^{[\beta_1]},\ldots,J_s^{[\beta_s]};R)}
{\boldsymbol\beta!}
\mathbf u^{\boldsymbol\beta}.
\]
The degree-$(d-1)$ part of
$Q(\mathbf u+\mathbf1)-Q(\mathbf u)$ is
$\sum_{j=1}^s\partial Q_d/\partial u_j$. Comparing the coefficient
of $\mathbf u^{\boldsymbol\alpha}/\boldsymbol\alpha!$, for
$|\boldsymbol\alpha|=d-1$, gives
\begin{equation}
\label{eq:fixed-primary-diagonal}
c^\Delta_{d,\boldsymbol\alpha}(J_1|\cdots|J_s;R)
=
\sum_{j=1}^s
e(J_1^{[\alpha_1]},\ldots,
J_j^{[\alpha_j+1]},\ldots,
J_s^{[\alpha_s]};R).
\end{equation}
This is also \cite[Proposition~5.2]{CBJP} in the present indexing. Applying \eqref{eq:fixed-primary-diagonal} with
$J_j=I_n^{(j)}$, and using
\cite[Theorem~1.6 and equation~(9)]{Cut}, we obtain, for every
$|\boldsymbol\beta|=d$,
\[
\lim_{n\to\infty}
\frac{
e((I_n^{(1)})^{[\beta_1]},\ldots,
(I_n^{(s)})^{[\beta_s]};R)}
{n^d}
=
e((\mathcal I^{(1)})^{[\beta_1]},\ldots,
(\mathcal I^{(s)})^{[\beta_s]};R),
\]
and this limit is finite. Dividing
\eqref{eq:fixed-primary-diagonal} by $n^d$ and passing to the limit
in the finite sum gives \eqref{eq:mixed-primary-comparison}.
\end{proof}

\begin{remark}
For $s>1$, a diagonal coefficient is generally a sum of adjacent
Rees--Teissier mixed multiplicities rather than a single one. Indeed,
in the $\mathfrak m$-primary case,
\[
c^\Delta_{d,\boldsymbol\alpha}
(J_1^{a_1}|\cdots|J_s^{a_s};R)
=
\sum_{j=1}^s
a_j\prod_{\ell=1}^s a_\ell^{\alpha_\ell}\,
e(J_1^{[\alpha_1]},\ldots,
J_j^{[\alpha_j+1]},\ldots,
J_s^{[\alpha_s]};R).
\]
Thus simultaneous rescaling $a_1=\cdots=a_s=a$ gives the factor
$a^d$, while separate rescaling does not in general produce a single
monomial factor.
\end{remark}

\subsection*{The fixed-factor problem}
Simultaneous homogeneity controls the standard Veronese subsequence,
but not the remaining residue classes of a Noetherian filtration.
The missing ingredient is the following fixed-factor estimate. For
tuples $\mathbf J=(J_1,\ldots,J_s)$ and
$\mathbf K=(K_1,\ldots,K_s)$, consider
\begin{equation}
\label{eq:mixed-FF}
c^\Delta_{i,\boldsymbol\alpha}
(J_1^mK_1|\cdots|J_s^mK_s;R)
=
m^i
c^\Delta_{i,\boldsymbol\alpha}
(J_1|\cdots|J_s;R)
+
O(m^{i-1}).
\end{equation}
Below we only need the case
$\sqrt{J_j}=\sqrt{K_j}$, with these ideals of positive height. For $s=1$, \eqref{eq:mixed-FF} is precisely
Lemma~\ref{lem:fixed-factor}. It also holds when all the ideals are
$\mathfrak m$-primary. Indeed, the components with $i<d$ vanish, while
for $i=d$ formula \eqref{eq:fixed-primary-diagonal} and the
multilinearity of ordinary mixed Hilbert--Samuel multiplicities show
that the terms involving only the $J_j$ contribute the degree-$d$
part in $m$, whereas every term involving at least one fixed factor
$K_j$ has degree at most $d-1$.

\begin{proposition}
\label{thm:mixed-noetherian-convergence}
Let $\mathcal I^{(1)},\ldots,\mathcal I^{(s)}$ be Noetherian graded
filtrations with $\operatorname{ht}I_1^{(j)}>0$ for every $j$.
Choose a common standard Veronese index $q$, and assume that
\eqref{eq:mixed-FF} holds for the fixed-factor tuples arising from
the corresponding residue-class decompositions. Then, for every
$1\leq i\leq d$ and $|\boldsymbol\alpha|=i-1$, the diagonal mixed
limit exists and is finite, and
\[
c^\Delta_{i,\boldsymbol\alpha}
(\mathcal I^{(1)}|\cdots|\mathcal I^{(s)};R)
=
\frac{
c^\Delta_{i,\boldsymbol\alpha}
(I_q^{(1)}|\cdots|I_q^{(s)};R)}
{q^i}.
\]
\end{proposition}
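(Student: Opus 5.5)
The plan is to copy the proof of Theorem~\ref{thm:noetherian-convergence}, with Lemma~\ref{lem:mixed-homogeneity} replacing Proposition~\ref{prop:homogeneity} and the assumed estimate \eqref{eq:mixed-FF} replacing Lemma~\ref{lem:fixed-factor}.

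First, since each $\mathscr R(\mathcal I^{(j)})$ is finitely generated, some multiple of the individual standard Veronese indices works for all $j$ simultaneously. So we may fix $q$ with $I^{(j)}_{qn}=(J_j)^n$ for every $n\geq0$, where $J_j:=I^{(j)}_q$.

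Second, fix $0\leq r<q$. Each $M^{(j,r)}=\bigoplus_n I^{(j)}_{qn+r}$ is a finite graded module over the standard graded algebra $R[J_jt^q]$, exactly as in the proof of Theorem~\ref{thm:noetherian-convergence}. Hence there are integers $a_{j,r}$ and ideals $K_{j,r}$ with
\[
I^{(j)}_{qn+r}=J_j^{\,n-a_{j,r}}K_{j,r}\qquad(n\geq a_{j,r}).
\]
To get a common exponent, set $a_r=\max_j a_{j,r}$ and replace $K_{j,r}$ by $J_j^{\,a_r-a_{j,r}}K_{j,r}$. Then
\[
I^{(j)}_{qn+r}=J_j^{\,n-a_r}K_{j,r}\qquad\text{for all }j\text{ and all }n\geq a_r.
\]
Because each family is a filtration, all its positive-degree terms share the radical of $I^{(j)}_1$. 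Thus $\sqrt{J_j}=\sqrt{K_{j,r}}$, and these ideals have positive height. These are exactly the fixed-factor tuples in the hypothesis.

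Third, compute the limit along each residue class.
\begin{itemize}
\item For $r=0$, Lemma~\ref{lem:mixed-homogeneity} gives
\[
c^\Delta_{i,\boldsymbol\alpha}(I^{(1)}_{qn}|\cdots|I^{(s)}_{qn};R)=n^i\,c^\Delta_{i,\boldsymbol\alpha}(J_1|\cdots|J_s;R).
\]
\item For $r>0$, apply \eqref{eq:mixed-FF} with $m=n-a_r$. This gives
\[
c^\Delta_{i,\boldsymbol\alpha}(I^{(1)}_{qn+r}|\cdots|I^{(s)}_{qn+r};R)=(n-a_r)^i\,c^\Delta_{i,\boldsymbol\alpha}(J_1|\cdots|J_s;R)+O(n^{i-1}).
\]
\end{itemize}
Divide by $(qn+r)^i=q^in^i+O(n^{i-1})$ and let $n\to\infty$. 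Every residue class then tends to $c^\Delta_{i,\boldsymbol\alpha}(J_1|\cdots|J_s;R)/q^i$, which is finite. Since there are only finitely many residue classes modulo $q$, the whole sequence converges to this value, which is the asserted formula.

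The main point needing care is the second step. One must equalize the exponents $n-a_{j,r}$ across the $s$ families so that a single $m$ appears in \eqref{eq:mixed-FF}. The fix is to absorb the extra powers of $J_j$ into $K_{j,r}$, and this absorption keeps the radical condition $\sqrt{J_j}=\sqrt{K_{j,r}}$ intact.
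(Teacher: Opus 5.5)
Your proposal is correct and follows essentially the same argument as the paper: a residue-class decomposition $I^{(j)}_{qm+r}=J_j^{\,m-a}K_{j,r}$ with a common exponent, the radical equality $\sqrt{K_{j,r}}=\sqrt{J_j}$ from the filtration property, Lemma~\ref{lem:mixed-homogeneity} for $r=0$, and \eqref{eq:mixed-FF} for $r>0$. The only difference is cosmetic: the paper chooses one $a$ valid for all pairs $(j,r)$ and sets $K_{j,r}=I^{(j)}_{qa+r}$, whereas you equalize exponents by absorbing powers of $J_j$ into $K_{j,r}$, which produces the same kind of ideals $I^{(j)}_{qa_r+r}$.
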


\begin{proof}
Set $J_j=I_q^{(j)}$. For each $0\leq r<q$, the residue-class module
of $\mathscr R(\mathcal I^{(j)})$ is finite over its standard
Veronese algebra. Since there are only finitely many pairs $(j,r)$,
there exists $a\geq1$ such that, for every $j$, every $r$, and every
$m\geq a$, $I_{qm+r}^{(j)}
=
J_j^{\,m-a}K_{j,r}$, and $K_{j,r}:=I_{qa+r}^{(j)}.$
Because each $\mathcal I^{(j)}$ is a filtration,
$\sqrt{K_{j,r}}=\sqrt{J_j}=\sqrt{I_1^{(j)}}$.

Fix $1\leq i\leq d$ and $|\boldsymbol\alpha|=i-1$. For
$1\leq r<q$, applying \eqref{eq:mixed-FF} with $m-a$ in place of
$m$ gives
\[
c^\Delta_{i,\boldsymbol\alpha}
(I_{qm+r}^{(1)}|\cdots|I_{qm+r}^{(s)};R)
=
(m-a)^i
c^\Delta_{i,\boldsymbol\alpha}
(J_1|\cdots|J_s;R)
+
O(m^{i-1}).
\]
Since $(m-a)^i=m^i+O(m^{i-1})$ and
$(qm+r)^i=q^im^i+O(m^{i-1})$, it follows that
\[
\lim_{m\to\infty}
\frac{
c^\Delta_{i,\boldsymbol\alpha}
(I_{qm+r}^{(1)}|\cdots|I_{qm+r}^{(s)};R)}
{(qm+r)^i}
=
\frac{
c^\Delta_{i,\boldsymbol\alpha}
(J_1|\cdots|J_s;R)}
{q^i}.
\]
For $r=0$, one has $I_{qm}^{(j)}=J_j^m$, and
Lemma~\ref{lem:mixed-homogeneity} gives the same limit directly.
Thus every residue-class subsequence modulo $q$ converges to the same
finite value. Since these subsequences exhaust the full sequence, the
diagonal mixed limit exists and
\[
c^\Delta_{i,\boldsymbol\alpha}
(\mathcal I^{(1)}|\cdots|\mathcal I^{(s)};R)
=
q^{-i}
c^\Delta_{i,\boldsymbol\alpha}
(I_q^{(1)}|\cdots|I_q^{(s)};R).
\]
\end{proof}

The intersection formula of
Lemma~\ref{lem:mixed-intersection-formula} proves
\eqref{eq:mixed-FF} in the reduced pure-dimensional complex analytic
setting. Indeed, principalize the finitely many ideals
$J_j,K_j$ and $\mathfrak m$ on a common model. If $D_{J_j}$ and
$D_{K_j}$ are the corresponding divisors, then
\[
D_{J_j^mK_j}=mD_{J_j}+D_{K_j}.
\]
Multilinearity of the degree-$i$ intersection formula shows that the
degree-$i$ term in $m$ is
$m^ic^\Delta_{i,\boldsymbol\alpha}
(J_1|\cdots|J_s;R)$, while every term involving at least one
$D_{K_j}$ has degree at most $i-1$. Thus
\eqref{eq:mixed-FF} holds in this setting.

\begin{problem}
\label{prob:mixed-fixed-factor}
Establish \eqref{eq:mixed-FF} for arbitrary positive-height tuples
with $\sqrt{J_j}=\sqrt{K_j}$, or determine additional hypotheses
under which it holds.
\end{problem}

This problem is not resolved by separate rescaling in
\eqref{eq:fixed-mixed-product}; accordingly,
Proposition~\ref{thm:mixed-noetherian-convergence} is conditional
outside the cases established above.

\subsection*{Products of families}
For graded families
$\mathcal I^{(1)},\ldots,\mathcal I^{(s)}$, define their
\emph{termwise product}
$\mathcal K=\mathcal I^{(1)}\cdots\mathcal I^{(s)}$ by
$K_n=I_n^{(1)}\cdots I_n^{(s)}$. Then $\mathcal K$ is graded, and it
is a filtration whenever all the factors are filtrations.

\begin{proposition}
\label{thm:asymptotic-product}
Fix $1\leq i\leq d$. If all diagonal mixed components of index $i$
exist and are finite, then $c_i(\mathcal K;R)$ exists and
\begin{equation}
\label{eq:asymptotic-product}
c_i(\mathcal K;R)
=
\sum_{|\boldsymbol\alpha|=i-1}
\binom{i-1}{\boldsymbol\alpha}
c^\Delta_{i,\boldsymbol\alpha}
(\mathcal I^{(1)}|\cdots|\mathcal I^{(s)};R).
\end{equation}
\end{proposition}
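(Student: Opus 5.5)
The plan is to apply the fixed-ideal product identity \eqref{eq:fixed-mixed-product} termwise and then pass to the limit in a finite sum. Fix $n\geq1$ and apply \eqref{eq:fixed-mixed-product} to the tuple $(I_n^{(1)},\ldots,I_n^{(s)})$. Since $K_n=I_n^{(1)}\cdots I_n^{(s)}$, this gives, for the fixed index $1\leq i\leq d$,
\[
c_i(K_n;R)
=
\sum_{|\boldsymbol\alpha|=i-1}
\binom{i-1}{\boldsymbol\alpha}
c^\Delta_{i,\boldsymbol\alpha}
(I_n^{(1)}|\cdots|I_n^{(s)};R).
\]
No homogeneity or scaling of the diagonal coefficients is used here; we only need the identity at each single level $n$. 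This is the same comparison $H_{K_n}(r,v)=H^\Delta(r,v\mathbf 1)$ used to justify \eqref{eq:fixed-mixed-product}.

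Next, divide both sides by $n^i$. The right-hand side becomes a sum, indexed by the finitely many multi-indices $\boldsymbol\alpha\in\mathbb N^s$ with $|\boldsymbol\alpha|=i-1$, of fixed nonnegative integer multinomial coefficients times the normalized sequences
\[
c^\Delta_{i,\boldsymbol\alpha}(I_n^{(1)}|\cdots|I_n^{(s)};R)/n^i .
\]
By hypothesis, each of these sequences converges to the finite value $c^\Delta_{i,\boldsymbol\alpha}(\mathcal I^{(1)}|\cdots|\mathcal I^{(s)};R)$. A finite linear combination of convergent real sequences converges to the corresponding combination of the limits. Hence $c_i(K_n;R)/n^i$ converges, so $\lowc_i(\mathcal K;R)=\upc_i(\mathcal K;R)$, and its limit is given by \eqref{eq:asymptotic-product}. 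In particular the limit is finite.

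The only point needing care, and the main (mild) obstacle, is that \eqref{eq:fixed-mixed-product} applies to each tuple $(I_n^{(j)})_j$. That identity is stated for $1\leq i\leq d$ and arbitrary proper ideals, via the CBJP multigraded Hilbert polynomial, so it applies directly. The standing positive-height assumption in the definition of the family invariants is inherited and plays no further role.

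I would also record the formal remark that $\mathcal K$ is a graded family. This follows from $K_mK_n=\prod_j I_m^{(j)}I_n^{(j)}\subseteq\prod_j I_{m+n}^{(j)}=K_{m+n}$, and it ensures that $c_i(\mathcal K;R)$ is defined as in Definition~\ref{def:asymptotic}.
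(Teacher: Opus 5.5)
Your proposal is correct and is essentially the paper's own proof. You apply \eqref{eq:fixed-mixed-product} to $(I_n^{(1)},\ldots,I_n^{(s)})$ at each level $n$, divide by $n^i$, and pass to the limit in the finite sum. Your remark that $\mathcal K$ is graded is likewise recorded in the paper just before the proposition.
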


\begin{proof}
For every $n\geq1$, applying
\eqref{eq:fixed-mixed-product} to
$I_n^{(1)},\ldots,I_n^{(s)}$ gives
\[
c_i(K_n;R)
=
\sum_{|\boldsymbol\alpha|=i-1}
\binom{i-1}{\boldsymbol\alpha}
c^\Delta_{i,\boldsymbol\alpha}
(I_n^{(1)}|\cdots|I_n^{(s)};R).
\]
Dividing by $n^i$ and passing to the limit in the finite sum yields
\eqref{eq:asymptotic-product}.
\end{proof}

\begin{remark}
If the factors are Noetherian, then so is their termwise product:
$\mathscr R(\mathcal K)$ is the diagonal subalgebra of the finitely
generated multigraded algebra
$\bigoplus_{\mathbf n}\prod_j I_{n_j}^{(j)}\mathbf T^{\mathbf n}$,
and hence is finitely generated. Thus
Theorem~\ref{thm:noetherian-convergence} gives convergence of the
ordinary components of $\mathcal K$ independently of
\eqref{eq:mixed-FF}, although it does not separate the summands in
\eqref{eq:asymptotic-product}.

For $\mathfrak m$-primary families,
Theorem~\ref{thm:mixed-primary} makes
\eqref{eq:asymptotic-product} unconditional and yields Cutkosky's
product formula
\[
e(\mathcal I^{(1)}\cdots\mathcal I^{(s)};R)
=
\sum_{|\boldsymbol\beta|=d}
\binom{d}{\boldsymbol\beta}
e((\mathcal I^{(1)})^{[\beta_1]},\ldots,
(\mathcal I^{(s)})^{[\beta_s]};R).
\]
Indeed,
$\sum_{j:\beta_j>0}(d-1)!/(\boldsymbol\beta-\mathbf e_j)!
=d!/\boldsymbol\beta!$.
\end{remark}

\subsection*{A geometric direction}

For a fixed ideal, \cite[Theorem~5.12]{CRPU} interprets
$c_i(I;R)$ as the local multiplicity of a Segre-type cycle obtained
from general hyperplanes. In using this description, one must
distinguish cycles from their classes modulo rational equivalence.

\begin{remark}
\label{rem:local-degree-caution}
The map
$\sum_{\mathfrak p}a_{\mathfrak p}[R/\mathfrak p]
\mapsto
\sum_{\mathfrak p}a_{\mathfrak p}e(R/\mathfrak p)$
does not descend to the ordinary Chow group of a local scheme.
Indeed, on $\operatorname{Spec}k[[x]]$ the closed point is the
principal divisor of $x$, hence rationally equivalent to zero, while
its local multiplicity is $1$. Thus Chow-theoretic projection formulas
alone do not yield numerical formulas for the $c_i$; the relevant
local intersection data must also be retained. See also
\cite{CutMontano}.
\end{remark}

The common-model argument requires an intersection formula for the
diagonal coefficients. The next lemma derives it directly from the
multigraded quotient, using additivity of leading Hilbert terms,
multidegrees on the proper fiber, and the projection formula. Its
specialization to one ideal agrees with the formulas of
Gaffney--Gassler and Achilles--Rams.

Throughout, let $R=\mathcal O_{X,0}$, where $(X,0)$ is a reduced
complex analytic germ of pure dimension $d\geq1$. For proper ideals
$\mathbf J=(J_1,\ldots,J_s)$, we use the diagonal coefficients
$c_{i,\boldsymbol\alpha}^{\Delta}(J_1|\cdots|J_s;R)$ defined by
\[
H_{\mathbf J}^{\Delta}(r,\mathbf u)
=
\lambda_R\left(
\frac{J^{\mathbf u}}
{\mathfrak m^{r+1}J^{\mathbf u}+J^{\mathbf u+\mathbf1}}
\right),
\]
whose degree-$(d-1)$ homogeneous part is
\[
\sum_{i=1}^d\ \sum_{|\boldsymbol\alpha|=i-1}
\frac{
c_{i,\boldsymbol\alpha}^{\Delta}(J_1|\cdots|J_s;R)}
{(d-i)!\,\boldsymbol\alpha!}
r^{d-i}\mathbf u^{\boldsymbol\alpha}.
\]

\begin{lemma}
\label{lem:mixed-intersection-formula}
Let $(X,0)$ be a reduced complex analytic germ of pure dimension
$d\geq1$, and set $R=\mathcal O_{X,0}$. Let $J_1,\ldots,J_s$ be
proper ideals of positive height. Suppose that a proper modification
$\pi:Y\to X$ principalizes $\mathfrak m,J_1,\ldots,J_s$, and write
\[
\mathfrak m\mathcal O_Y=\mathcal O_Y(-F),
\qquad
J_a\mathcal O_Y=\mathcal O_Y(-D_a).
\]
Set $D=\sum_aD_a$, $H=c_1(\mathcal O_Y(-F))$, and
$L_a=c_1(\mathcal O_Y(-D_a))$. Denote by $[D]^0$ the part of
$[D]$ supported on $\pi^{-1}(0)$, and put
$[D]^\circ=[D]-[D]^0$. Then, for $1\leq i\leq d$ and
$|\boldsymbol\alpha|=i-1$,
\begin{equation}
\label{eq:mixed-intersection-formula}
c^\Delta_{i,\boldsymbol\alpha}(J_1|\cdots|J_s;R)
=
\begin{cases}
\displaystyle
\deg_{\pi^{-1}(0)}
\left(
H^{d-i-1}\prod_{a=1}^sL_a^{\alpha_a}
\cap(F\cdot[D]^\circ)
\right),& i<d,\\[2mm]
\displaystyle
\deg_{\pi^{-1}(0)}
\left(
\prod_{a=1}^sL_a^{\alpha_a}\cap[D]^0
\right),& i=d.
\end{cases}
\end{equation}
\end{lemma}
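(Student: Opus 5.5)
We plan to reduce the mixed formula to the one-ideal Gaffney--Gassler/Achilles--Rams formulas already used in \eqref{eq:common-model-lower-components}--\eqref{eq:common-model-top-component}, by means of the product identity \eqref{eq:powered-mixed-product} and polarization. The key observation is that both sides of \eqref{eq:mixed-intersection-formula}, once weighted by multinomial coefficients, are polynomial in a scaling vector, and the product identity identifies these polynomials.

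Fix positive integers $\mathbf a=(a_1,\ldots,a_s)$ and set $K_{\mathbf a}=J_1^{a_1}\cdots J_s^{a_s}$. On $Y$ one has $K_{\mathbf a}\mathcal O_Y=\mathcal O_Y(-D_{\mathbf a})$ with $D_{\mathbf a}=\sum_a a_aD_a$. The one-ideal formula on the common model (the proof of Theorem~\ref{thm:common-analytic-model-convergence}, with the single ideal $K_{\mathbf a}$) gives $c_i(K_{\mathbf a};R)=P_i(D_{\mathbf a})$. Expanding $L(D_{\mathbf a})=\sum a_aL_a$ multilinearly, we obtain
\[
P_i(D_{\mathbf a})=\sum_{|\boldsymbol\alpha|=i-1}\binom{i-1}{\boldsymbol\alpha}\mathbf a^{\boldsymbol\alpha}\,T_{\boldsymbol\alpha}(\mathbf a),
\]
where $T_{\boldsymbol\alpha}(\mathbf a)$ is the right-hand side of \eqref{eq:mixed-intersection-formula} with $[D]$ replaced by $[D_{\mathbf a}]$. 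This expression is linear in $\mathbf a$, since the horizontal and vertical projections are linear.

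Next we need the scaling behaviour of the left-hand side. The quotient $J^{\mathbf a\mathbf u}/J^{\mathbf a\mathbf u+\mathbf a}$ is filtered by single-step increments $J^{\mathbf v}/J^{\mathbf v+\mathbf e_j}$. This reduces the diagonal Hilbert function of $(J_1^{a_1}|\cdots|J_s^{a_s})$ to the multigraded modules $U_j=\bigoplus J^{\mathbf v}/J^{\mathbf v+\mathbf e_j}$, exactly as in the proofs of Lemmas~\ref{lem:positive-fixed-factor} and~\ref{lem:mixed-homogeneity}. Using Lemma~\ref{lem:additivity} in the multigraded setting, we get
\[
c^\Delta_{i,\boldsymbol\alpha}(J_1^{a_1}|\cdots|J_s^{a_s};R)=\mathbf a^{\boldsymbol\alpha}\sum_j a_j\,e_{i,\boldsymbol\alpha,j},
\]
where $e_{i,\boldsymbol\alpha,j}$ is the coefficient of the top part of $U_j$, and $c^\Delta_{i,\boldsymbol\alpha}(\mathbf J)=\sum_je_{i,\boldsymbol\alpha,j}$. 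Then \eqref{eq:powered-mixed-product} states that
\[
\sum_{\boldsymbol\alpha}\binom{i-1}{\boldsymbol\alpha}\mathbf a^{\boldsymbol\alpha}\sum_ja_je_{i,\boldsymbol\alpha,j}
\]
equals $P_i(D_{\mathbf a})$ for all positive integer $\mathbf a$. Both sides are polynomials in $\mathbf a$, so the identity holds for all real $\mathbf a$.

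Comparing coefficients of $\mathbf a^{\boldsymbol\alpha}a_j$ does not immediately separate the terms, because monomials $\mathbf a^{\boldsymbol\alpha+\mathbf e_j}$ coincide for different pairs $(\boldsymbol\alpha,j)$. This is the main obstacle. To resolve it, we would prove the finer identity $e_{i,\boldsymbol\alpha,j}=T_{\boldsymbol\alpha}$ with $[D]$ replaced by $[D_j]$, working directly with $U_j$ rather than with the product. Indeed, $U_j\cong\bigoplus J^{\mathbf v}\otimes R/J_j$ up to Artin--Rees error. Its $\mathfrak m$-filtered top term is a multidegree of the exceptional divisor $D_j$ (vertical part) or of $F\cdot D_j$ restricted to its horizontal part, computed on the multi-Rees blowup $\operatorname{Proj}$ of $R[J_1T_1,\ldots,J_sT_s]$. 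That multi-Rees blowup is dominated by $Y$ through the universal property, and we pass to $Y$ by the projection formula. This is the standard Gaffney--Gassler argument, carried out with a multigraded Rees algebra in place of a single one.

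Finally, we sum over $j$ and use the linearity $[D]=\sum_j[D_j]$ to obtain \eqref{eq:mixed-intersection-formula}. As noted in Remark~\ref{rem:local-degree-caution}, all cycles involved are supported on the proper fiber $\pi^{-1}(0)$. We must therefore keep the local intersection data on that fiber and take degrees there, rather than in a local Chow group.
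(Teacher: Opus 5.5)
Your proposal is correct in outline and, once the polarization detour is set aside, it is essentially the paper's proof. As you observe, the product identity only gives symmetrized relations, and the final argument does not need it. What remains matches the paper: it also works on $Z=\operatorname{MultiProj}R[J_1T_1,\ldots,J_sT_s]$. Components over $0$ are killed by $\mathfrak m$, so they contribute only for $i=d$ and are read off as multidegrees; the remaining components are handled via the blowup of $\mathfrak m\mathcal O_W$; everything is then transferred to $Y$ by the universal property and the projection formula. The only difference is that you split $B/KB$ into the single-step quotients $B/J_jB$ and sum using $[D]=\sum_j[D_j]$, whereas the paper takes a prime filtration of $B/KB$ directly, so both amount to the same decomposition $[A]=\sum_j[A_j]$ on $Z$.
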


\begin{proof}
Set $K=J_1\cdots J_s$, $B=\bigoplus_{\mathbf u\in\mathbb N^s}
J^{\mathbf u}\mathbf T^{\mathbf u}$, and $M=B/KB$.
Then
\[
H_M(r,\mathbf u)
=
\lambda_R\left(
\frac{J^{\mathbf u}}
{\mathfrak m^{r+1}J^{\mathbf u}+KJ^{\mathbf u}}
\right),
\]
and the diagonal coefficients are precisely the coefficients of the
homogeneous part of total degree $d-1$ of its eventual polynomial.

Let $Z=\operatorname{MultiProj}(B)$. After discarding components not
dominating components of $X$, we may work on the reduced
pure-dimensional part of $Z$. Each $J_a\mathcal O_Z$ is invertible;
write
\[
J_a\mathcal O_Z=\mathcal O_Z(-A_a),
\qquad
A=\sum_aA_a,
\qquad
\ell_a=c_1(\mathcal O_Z(-A_a)).
\]
Since $K\mathcal O_Z=\mathcal O_Z(-A)$, the sheaf associated with
$M$ is $\mathcal O_A$. The degree-$(d-1)$ part of the cumulative Hilbert polynomial is
additive on finite multigraded exact sequences. Indeed, the same
Artin--Rees argument as in Lemma~\ref{lem:additivity} shows that the
induced and intrinsic $\mathfrak m$-adic filtrations differ only by
a bounded shift, whose contribution has total degree at most $d-2$.
A homogeneous prime filtration of $M$ therefore gives
\begin{equation}
\label{eq:mixed-prime-decomposition}
H_M^{[d-1]}
=
\sum_W\mu_W H_{C_W}^{[d-1]},
\end{equation}
where
$[A]=\sum_W\mu_W[W]$,
$C_W=B/P_W$, and
$\mu_W=\lambda_{\mathcal O_{Z,\eta_W}}
(\mathcal O_{A,\eta_W})$.
Components of dimension less than $d-1$, as well as irrelevant
components, do not contribute to \eqref{eq:mixed-prime-decomposition}.

Suppose first that $W\subseteq Z_0$, where $Z_0$ is the fiber over
$0$. Then $\mathfrak mC_W=0$, so $H_{C_W}(r,\mathbf u)$ is
independent of $r$. Hence $W$ contributes only when $i=d$, and the
multidegree interpretation of the leading Hilbert coefficients gives
\[
\mu_W
\deg_W\left(
\prod_a\ell_a^{\alpha_a}\cap[W]
\right),
\qquad |\boldsymbol\alpha|=d-1.
\]

Now suppose that $W\not\subseteq Z_0$. Let
$q_W:\widehat W\to W$ be the blowup of
$\mathfrak m\mathcal O_W$, with exceptional divisor $E_W$, and set
$h_W=c_1(\mathcal O_{\widehat W}(-E_W))$. Since
\[
\dim_{\mathbb C}
\bigl[\operatorname{gr}_{\mathfrak m}(C_W)\bigr]_{r,\mathbf u}
=
H_{C_W}(r,\mathbf u)-H_{C_W}(r-1,\mathbf u),
\]
a term
\[
\frac{a_{k,\boldsymbol\alpha}}
{k!\boldsymbol\alpha!}
r^k\mathbf u^{\boldsymbol\alpha},
\qquad
k+|\boldsymbol\alpha|=d-1,
\]
in the top homogeneous part of $H_{C_W}$ contributes
\[
\frac{a_{k,\boldsymbol\alpha}}
{(k-1)!\boldsymbol\alpha!}
r^{k-1}\mathbf u^{\boldsymbol\alpha}
\]
to the leading polynomial of
$\operatorname{gr}_{\mathfrak m}(C_W)$ when $k\geq1$.
The associated multiprojective scheme is the exceptional divisor
$E_W$, with tautological classes $h_W$ and $q_W^*\ell_a$.
Therefore
\begin{equation}
\label{eq:mixed-component-degree}
a_{k,\boldsymbol\alpha}
=
\deg_{E_W}\left(
h_W^{k-1}
\prod_a(q_W^*\ell_a)^{\alpha_a}
\cap[E_W]
\right).
\end{equation}
There is no degree-$(d-1)$ contribution with $k=0$, since
$W\cap Z_0$ is a proper closed subset of the $(d-1)$-dimensional
irreducible space $W$, and hence has dimension at most $d-2$. Let
\[
b:Z'=\operatorname{Bl}_{\mathfrak m\mathcal O_Z}(Z)\longrightarrow Z
\]
and denote its exceptional divisor by $F'$. Put
$H'=c_1(\mathcal O_{Z'}(-F'))$. If $W\not\subseteq Z_0$, its strict
transform in $Z'$ is
$\operatorname{Bl}_{\mathfrak m\mathcal O_W}(W)$, so
\eqref{eq:mixed-component-degree}, together with
\eqref{eq:mixed-prime-decomposition}, gives, for $i<d$,
\[
c^\Delta_{i,\boldsymbol\alpha}(\mathbf J;R)
=
\deg_{Z'_0}\left(
(H')^{d-i-1}
\prod_a(b^*\ell_a)^{\alpha_a}
\cap\bigl(F'\cdot[b^*A]^\circ\bigr)
\right).
\]
The components contained in $Z_0$ similarly give
\[
c^\Delta_{d,\boldsymbol\alpha}(\mathbf J;R)
=
\deg_{Z_0}\left(
\prod_a\ell_a^{\alpha_a}\cap[A]^0
\right).
\]

Since $Y$ principalizes
$\mathfrak m,J_1,\ldots,J_s$, the universal properties of the
multiblowup and of the blowup of $\mathfrak m$ give morphisms $Y\longrightarrow Z'$,
$Y\longrightarrow Z.$ The tautological classes pull back to $H,L_1,\ldots,L_s$, while
$D$ is the pullback of $A$. At the level of divisor cycles,
the horizontal and vertical parts push forward to
$[b^*A]^\circ$ and $[A]^0$, respectively; exceptional components
whose images have smaller dimension have zero pushforward.
The projection formula therefore transforms the two preceding
expressions into
\eqref{eq:mixed-intersection-formula}. All resulting zero-cycles are supported on the proper fiber
$\pi^{-1}(0)$, so their degrees are well defined.
\end{proof}

\begin{remark}
For $s=1$, Lemma~\ref{lem:mixed-intersection-formula} recovers
\cite[(3.2.3)--(3.2.4)]{GaffneyGassler}, together with the
identification of Segre numbers and codimension-indexed
multiplicity-sequence components from
\cite[Theorem~2]{AchillesRams}. For $s>1$, the argument works
directly with the diagonal multigraded quotient and does not require
directional coefficients.
\end{remark}

The preceding intersection formula gives a mixed analogue of the
common-model convergence theorem. Once all ideals are principalized
on a single modification, their divisors have fixed finite support
and subadditive coefficients. Fekete's lemma then produces limiting
divisors, while multilinearity of the intersection formula gives
convergence of every diagonal mixed component.

\begin{theorem}
\label{thm:mixed-common-model-convergence}
Let $(X,0)$ be a reduced complex analytic germ of pure dimension
$d\geq1$, and set $R=\mathcal O_{X,0}$. For $1\leq a\leq s$, let
$\mathcal I^{(a)}=\{I_n^{(a)}\}_{n\geq0}$ be a graded family of
proper ideals with $\operatorname{ht}I_1^{(a)}>0$. Suppose that there
exists a proper modification $\pi:Y\to X$ such that
$I_n^{(a)}\mathcal O_Y$ is invertible for every $a$ and every
$n\geq1$. Then, for every $1\leq i\leq d$ and
$|\boldsymbol\alpha|=i-1$, the limit
\[
c^\Delta_{i,\boldsymbol\alpha}
(\mathcal I^{(1)}|\cdots|\mathcal I^{(s)};R)
=
\lim_{n\to\infty}
\frac{
c^\Delta_{i,\boldsymbol\alpha}
(I_n^{(1)}|\cdots|I_n^{(s)};R)}
{n^i}
\]
exists and is finite.
\end{theorem}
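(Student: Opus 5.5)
The plan is to imitate the proof of Theorem~\ref{thm:common-analytic-model-convergence}, with Lemma~\ref{lem:mixed-intersection-formula} supplying the intersection formula.

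\emph{Normalizing the model.} As in the one-family case, replace $Y$ by the reduced union of the components dominating those of $X$, blow up $\mathfrak m\mathcal O_Y$, and normalize. The resulting modification is still proper, $Y$ is normal, and $\mathfrak m\mathcal O_Y=\mathcal O_Y(-F)$ together with $I_n^{(a)}\mathcal O_Y=\mathcal O_Y(-D^{(a)}_n)$ are invertible for all $a$ and $n\geq1$.

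\emph{Limiting divisors.} For each $a$, the graded condition gives $D^{(a)}_{m+n}\leq D^{(a)}_m+D^{(a)}_n$. The inclusion $(I_1^{(a)})^n\subseteq I_n^{(a)}$ gives $0\leq D^{(a)}_n\leq nD^{(a)}_1$. Hence all the divisors are supported on the finitely many prime divisors $E_1,\ldots,E_r$ occurring in $D^{(1)}_1+\cdots+D^{(s)}_1$, and their coefficients are nonnegative and subadditive. Fekete's lemma gives $D^{(a)}_n/n\to D^{(a)}_\infty$ in the finite-dimensional real vector space $V$ spanned by $E_1,\ldots,E_r$. Normality of $Y$ identifies Cartier divisors with their Weil divisors, so $V$ contains all the $D^{(a)}_n$ and the classes $c_1(\mathcal O_Y(-D))$ depend linearly on $D\in V$.

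\emph{Polynomial expression.} Apply Lemma~\ref{lem:mixed-intersection-formula} to $J_a=I_n^{(a)}$ on this fixed $Y$. It expresses $c^\Delta_{i,\boldsymbol\alpha}(I_n^{(1)}|\cdots|I_n^{(s)};R)$ as $P_{i,\boldsymbol\alpha}(D^{(1)}_n,\ldots,D^{(s)}_n)$. For $i<d$,
\[
P_{i,\boldsymbol\alpha}(D^{(1)},\ldots,D^{(s)})=\deg_{\pi^{-1}(0)}\Bigl(H^{d-i-1}\prod_aL(D^{(a)})^{\alpha_a}\cap\bigl(F\cdot[\textstyle\sum_aD^{(a)}]^\circ\bigr)\Bigr),
\]
and for $i=d$ the analogous expression uses $[\sum_aD^{(a)}]^0$. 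The horizontal and vertical projections and $D\mapsto L(D)$ are linear on $V$, and intersection products are multilinear. So $P_{i,\boldsymbol\alpha}$ extends to a polynomial function on $V^s$, homogeneous of total degree $|\boldsymbol\alpha|+1=i$: the factors $L$ contribute $\alpha_1+\cdots+\alpha_s$ and the cycle $[D]$ contributes one more. Homogeneity gives
\[
\frac{c^\Delta_{i,\boldsymbol\alpha}(I_n^{(1)}|\cdots|I_n^{(s)};R)}{n^i}=P_{i,\boldsymbol\alpha}\bigl(D^{(1)}_n/n,\ldots,D^{(s)}_n/n\bigr),
\]
and continuity yields the finite limit $P_{i,\boldsymbol\alpha}(D^{(1)}_\infty,\ldots,D^{(s)}_\infty)$.

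\emph{Main obstacle.} The delicate point is checking that the degree functional on zero-cycles supported on the proper fiber $\pi^{-1}(0)$ is well defined on rational-equivalence classes, and that it extends $\mathbb R$-multilinearly from integral Cartier divisors to $V$. This is exactly the caution raised in Remark~\ref{rem:local-degree-caution}. I would handle it as in Theorem~\ref{thm:common-analytic-model-convergence}: every cycle in the formula is an intersection of Cartier classes capped against a cycle supported on $\pi^{-1}(0)$ (namely $F\cdot[D]^\circ$ or $[D]^0$), so the degree is computed on the compact fiber and is $\mathbb Z$-multilinear in the integral divisors. One then extends by $\mathbb Q$-linearity and real polynomial extension on the lattice spanned by the $E_\nu$. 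One must also confirm that the splitting $[D]=[D]^0+[D]^\circ$ is linear in $D$, which holds because it is a coordinate projection in the basis $E_\nu$.
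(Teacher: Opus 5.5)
Your proposal is correct and follows the paper's proof essentially step by step: you normalize the model, apply Fekete's lemma to the subadditive coefficients of the $D^{(a)}_n$ along the finitely many $E_\nu$, write $c^\Delta_{i,\boldsymbol\alpha}(I_n^{(1)}|\cdots|I_n^{(s)};R)=P_{i,\boldsymbol\alpha}(D^{(1)}_n,\ldots,D^{(s)}_n)$ via Lemma~\ref{lem:mixed-intersection-formula}, and conclude by degree-$i$ homogeneity and continuity. One point needs tightening: define $P_{i,\boldsymbol\alpha}$ on the real span of the Cartier divisors $D^{(a)}_n$, as the paper does, rather than on all of $\operatorname{span}_{\mathbb R}(E_1,\ldots,E_r)$, because a prime divisor on a normal $Y$ need not be $\mathbb Q$-Cartier (so $L(D)$ is not defined there in general); this span is a closed finite-dimensional subspace, so it still contains the limits $D^{(a)}_\infty$.
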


\begin{proof}
After replacing $Y$ by the reduced union of the components dominating
those of $X$, blowing up $\mathfrak m\mathcal O_Y$, and normalizing,
we may assume that $Y$ is normal and that
$\mathfrak m\mathcal O_Y$ and every
$I_n^{(a)}\mathcal O_Y$ are invertible. Thus $\mathfrak m\mathcal O_Y=\mathcal O_Y(-F)$, $I_n^{(a)}\mathcal O_Y=\mathcal O_Y(-D_{a,n}),$
for effective Cartier divisors $F$ and $D_{a,n}$.
For each $a$, the graded-family condition gives
$D_{a,m+n}\leq D_{a,m}+D_{a,n}$, while
$I_1^{(a)n}\subseteq I_n^{(a)}$ gives
$0\leq D_{a,n}\leq nD_{a,1}$. Hence all the $D_{a,n}$ are supported
on the finitely many prime divisors
$E_1,\ldots,E_r$ occurring in $\sum_aD_{a,1}$. Write $D_{a,n}
=
\sum_{\nu=1}^r b_{a,\nu,n}E_\nu.$
Then each sequence $\{b_{a,\nu,n}\}_{n\geq1}$ is nonnegative and
subadditive. By Fekete's lemma,
\begin{equation}
\label{eq:mixed-divisor-limits}
\gamma_{a,\nu}
=
\lim_{n\to\infty}\frac{b_{a,\nu,n}}n
=
\inf_{n\geq1}\frac{b_{a,\nu,n}}n
\end{equation}
exists and is finite. Let $V$ be the real vector space generated by all the Cartier divisors
$D_{a,n}$. Since $Y$ is normal, the coefficient map embeds $V$ into
the finite-dimensional space spanned by $E_1,\ldots,E_r$. Thus, for
each $a$, $\frac{D_{a,n}}n\longrightarrow\delta_a$
 in $V,$
where $\delta_a$ has coefficients
$\gamma_{a,1},\ldots,\gamma_{a,r}$.

Fix $1\leq i\leq d$ and
$|\boldsymbol\alpha|=i-1$. Put
$H=c_1(\mathcal O_Y(-F))$, and extend
$D\mapsto c_1(\mathcal O_Y(-D))$ real-linearly to $V$; denote this
map by $\ell$. The decompositions of a divisor cycle into the parts
supported over $0$ and away from $0$ also extend linearly to $V$.

For $\mathbf v=(v_1,\ldots,v_s)\in V^s$, set
$v=v_1+\cdots+v_s$ and define
\[
P_{i,\boldsymbol\alpha}(\mathbf v)
=
\begin{cases}
\displaystyle
\deg_{\pi^{-1}(0)}
\left(
H^{d-i-1}\prod_a\ell(v_a)^{\alpha_a}
\cap(F\cdot[v]^\circ)
\right),& i<d,\\[2mm]
\displaystyle
\deg_{\pi^{-1}(0)}
\left(
\prod_a\ell(v_a)^{\alpha_a}\cap[v]^0
\right),& i=d.
\end{cases}
\]
By multilinearity of intersection products,
$P_{i,\boldsymbol\alpha}$ is a polynomial function on $V^s$,
homogeneous of degree
$1+|\boldsymbol\alpha|=i$ under simultaneous scaling.
Lemma~\ref{lem:mixed-intersection-formula} gives
\[
c^\Delta_{i,\boldsymbol\alpha}
(I_n^{(1)}|\cdots|I_n^{(s)};R)
=
P_{i,\boldsymbol\alpha}
(D_{1,n},\ldots,D_{s,n}).
\]
Therefore
\[
\frac{
c^\Delta_{i,\boldsymbol\alpha}
(I_n^{(1)}|\cdots|I_n^{(s)};R)}
{n^i}
=
P_{i,\boldsymbol\alpha}
\left(
\frac{D_{1,n}}n,\ldots,\frac{D_{s,n}}n
\right).
\]
Since $D_{a,n}/n\to\delta_a$ in $V$ for every $a$, continuity of
$P_{i,\boldsymbol\alpha}$ yields
\[
\lim_{n\to\infty}
\frac{
c^\Delta_{i,\boldsymbol\alpha}
(I_n^{(1)}|\cdots|I_n^{(s)};R)}
{n^i}
=
P_{i,\boldsymbol\alpha}
(\delta_1,\ldots,\delta_s),
\]
which is finite.
\end{proof}

\begin{remark}
Theorem~\ref{thm:common-analytic-model-convergence} is the case
$s=1$ of Theorem~\ref{thm:mixed-common-model-convergence}. Indeed,
then $|\boldsymbol\alpha|=i-1$ forces
$\boldsymbol\alpha=(i-1)$, and
$c^\Delta_{i,i-1}(J;R)=c_i(J;R)$.
Thus the mixed theorem extends the one-family common-model result.
\end{remark}

\begin{corollary}
\label{cor:mixed-family-product}
Under the hypotheses of
Proposition~\ref{thm:mixed-noetherian-convergence}, set $\mathcal P
=
\{I_n^{(1)}\cdots I_n^{(s)}\}_{n\geq0}.$
Then $\mathcal P$ is a Noetherian graded filtration and, for every
$1\leq i\leq d$,
\begin{equation}
\label{eq:asymptotic-mixed-product}
c_i(\mathcal P;R)
=
\sum_{|\boldsymbol\alpha|=i-1}
\binom{i-1}{\boldsymbol\alpha}
c^\Delta_{i,\boldsymbol\alpha}
(\mathcal I^{(1)}|\cdots|\mathcal I^{(s)};R).
\end{equation}
\end{corollary}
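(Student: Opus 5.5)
The corollary has two parts: $\mathcal P$ is a Noetherian graded filtration, and the product formula \eqref{eq:asymptotic-mixed-product} holds. We obtain the first from the structure of the factors and the second by combining Proposition~\ref{thm:mixed-noetherian-convergence} with Proposition~\ref{thm:asymptotic-product}.

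\emph{Step 1: $\mathcal P$ is a graded filtration.} Set $P_n=I_n^{(1)}\cdots I_n^{(s)}$. Then $P_0=R$, and $P_mP_n\subseteq P_{m+n}$ because each factor satisfies $I_m^{(j)}I_n^{(j)}\subseteq I_{m+n}^{(j)}$. Likewise $P_{n+1}\subseteq P_n$ because each factor is decreasing. Each $P_n$ with $n\geq1$ is proper, since it is contained in the proper ideal $I_n^{(1)}$.

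\emph{Step 2: $\mathcal P$ is Noetherian.} We follow the remark after Proposition~\ref{thm:asymptotic-product}. The $\mathbb N^s$-graded algebra $\bigoplus_{\mathbf n}\prod_jI^{(j)}_{n_j}\mathbf T^{\mathbf n}$ is the tensor product over $R$ of the algebras $\mathscr R(\mathcal I^{(j)})$, or rather its image in $R[T_1,\dots,T_s]$. It is therefore finitely generated. Its diagonal subalgebra is $\mathscr R(\mathcal P)$, after identifying $T_1\cdots T_s$ with $t$.

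To see that the diagonal is finitely generated, take homogeneous generators $g_1,\dots,g_N$ of degrees $\mathbf d_k$. The diagonal is spanned over $R$ by the monomials $\prod g_k^{e_k}$ with $\sum e_k\mathbf d_k\in\mathbb N\mathbf 1$. These exponent vectors form an affine semigroup, which is finitely generated by Gordan's lemma, so the diagonal subalgebra is finitely generated over $R$.

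A cleaner alternative uses the common standard Veronese index $q$. We have $P_{qm}=(J_1\cdots J_s)^m$, and $P_{qm+r}=(J_1\cdots J_s)^{m-a}\prod_jK_{j,r}$ for $m\geq a$, where $J_j$ and $K_{j,r}$ are as in the proof of Proposition~\ref{thm:mixed-noetherian-convergence}. Hence $\mathscr R(\mathcal P)$ is generated by its components of degree at most $qa+q$.

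\emph{Step 3: the formula.} The hypotheses of Proposition~\ref{thm:mixed-noetherian-convergence} give that every diagonal mixed limit $c^\Delta_{i,\boldsymbol\alpha}(\mathcal I^{(1)}|\cdots|\mathcal I^{(s)};R)$ with $|\boldsymbol\alpha|=i-1$ exists and is finite. Proposition~\ref{thm:asymptotic-product}, applied to the termwise product $\mathcal K=\mathcal P$, then yields existence of $c_i(\mathcal P;R)$ together with \eqref{eq:asymptotic-mixed-product}.

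As a consistency check, Theorem~\ref{thm:noetherian-convergence} independently gives $c_i(\mathcal P;R)=c_i(P_q;R)/q^i$. This agrees with \eqref{eq:asymptotic-mixed-product} via \eqref{eq:fixed-mixed-product} applied to $J_1,\dots,J_s$ and the formula for the mixed limits in Proposition~\ref{thm:mixed-noetherian-convergence}.

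\emph{Main obstacle.} The only nonformal point is the finite generation in Step 2. I would present the Veronese argument, which avoids Gordan's lemma and uses only the residue-class decomposition already established in the proof of Proposition~\ref{thm:mixed-noetherian-convergence}. The remaining steps are direct citations.
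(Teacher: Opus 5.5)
Your proposal is correct and follows the paper's proof: $\mathcal P$ is Noetherian because $\mathscr R(\mathcal P)$ is the diagonal subalgebra of the finitely generated $\mathbb N^s$-graded algebra $\bigoplus_{\mathbf n}\prod_j I^{(j)}_{n_j}\mathbf T^{\mathbf n}$, and the formula follows by combining Proposition~\ref{thm:mixed-noetherian-convergence} with Proposition~\ref{thm:asymptotic-product}. The paper only asserts that this diagonal subalgebra is finitely generated, so your Gordan's-lemma argument supplies the missing justification; your Veronese alternative, based on $P_{qm+r}=P_q^{\,m-a}P_{qa+r}$ for $m\geq a$, is also valid and needs only finite generation of the factors, not the fixed-factor hypothesis.
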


\begin{proof}
The termwise product of Noetherian graded filtrations is again a
Noetherian graded filtration: its family algebra is the diagonal
subalgebra of the finitely generated multigraded algebra $\bigoplus_{\mathbf n\in\mathbb N^s}
I_{n_1}^{(1)}\cdots I_{n_s}^{(s)}\mathbf t^{\mathbf n},$
and is therefore finitely generated. By
Proposition~\ref{thm:mixed-noetherian-convergence}, all diagonal mixed
components exist and are finite. The formula now follows directly
from Proposition~\ref{thm:asymptotic-product}.
\end{proof}

\begin{remark}
\label{rem:primary-mixed-cutkosky}
Theorem~\ref{thm:mixed-primary} requires neither finite generation of
the family algebras nor a common birational model. Its convergence
statement follows directly from Cutkosky's mixed multiplicities for
graded $\mathfrak m$-primary families
\cite[Theorem~1.6 and equation~(9)]{Cut}. In particular, for
$|\boldsymbol\alpha|=d-1$,
\[
c^\Delta_{d,\boldsymbol\alpha}
(\mathcal I^{(1)}|\cdots|\mathcal I^{(s)};R)
=
\sum_{a=1}^{s}
e\bigl(
(\mathcal I^{(1)})^{[\alpha_1]},\ldots,
(\mathcal I^{(a)})^{[\alpha_a+1]},\ldots,
(\mathcal I^{(s)})^{[\alpha_s]};R
\bigr).
\]
\end{remark}

The analytic common-model theorem suggests the following algebraic
extension. The main missing ingredient is a sufficiently robust local
intersection theory on the fixed birational model that recovers the
positive multiplicity-sequence components.

\begin{problem}
\label{prob:common-model}
Let $R$ be an excellent local domain and let $\mathcal I$ be a graded
family of nonzero proper ideals. Suppose that there exists a proper
birational morphism
$Y\to\operatorname{Spec}R$, with $Y$ normal, such that every
$I_n\mathcal O_Y$ is invertible. Under what additional local
intersection-theoretic hypotheses do all positive asymptotic
components $c_i(\mathcal I;R)$ exist and remain finite?
\end{problem}

\section{Acknowledgments}
This work was developed during the postdoctoral fellowship of the first author at the Instituto de Ci\^encias Matem\'aticas e de Computa\c{c}\~ao (ICMC), Universidade de S\~ao Paulo (USP) - 2026. He gratefully acknowledges financial support from FAPESP, grant 2025/20830-5. The first author also gratefully acknowledges the Universidade Tecnol\'ogica Federal do Paran\'a -- Campus Guarapuava for the opportunity to undertake his postdoctoral research. OpenAI’s ChatGPT was used during the preparation of this work as an auxiliary generative-AI tool for  discussion, checks of algebraic manipulations and examples, comparison with cited literature, and assistance with English exposition and LaTeX. All material used in the final manuscript was checked by the authors against the relevant arguments and sources. The authors assume full responsibility for the final text of this paper.


\begin{thebibliography}{99}

\bibitem{AchillesRams}
R.~Achilles and S.~Rams,
\emph{Intersection numbers, Segre numbers and generalized Samuel
multiplicities},
Arch. Math. (Basel) \textbf{77} (2001), 391--398.

\bibitem{GaffneyGassler}
T.~Gaffney and R.~Gassler,
\emph{Segre numbers and hypersurface singularities},
J. Algebraic Geom. \textbf{8} (1999), 695--736.

\bibitem{AM97}
R.~Achilles and M.~Manaresi,
\emph{Multiplicities of a bigraded ring and intersection theory},
Math. Ann. \textbf{309} (1997), no.~4, 573--591.

\bibitem{AM06}
R.~Achilles and M.~Manaresi,
\emph{Generalized Samuel multiplicities and applications},
Rend. Sem. Mat. Univ. Politec. Torino \textbf{64} (2006), no.~4,
345--372.


\bibitem{CBJP}
R.~Callejas--Bedregal and V.~H. Jorge P\'erez,
\emph{Mixed multiplicities for arbitrary ideals and generalized Buchsbaum--Rim multiplicities},
J. London Math. Soc. (2) \textbf{76} (2007), no.~2, 384--398.

\bibitem{CBJPAdd}
R.~Callejas--Bedregal and V.~H. Jorge P\'erez,
\emph{Some properties of the multiplicity sequence for arbitrary ideals},
Rocky Mountain J. Math. \textbf{40} (2010), no.~6, 1809--1827.

\bibitem{CRPU}
Y.~Cid-Ruiz, C.~Polini, and B.~Ulrich,
\emph{Multidegrees, families, and integral dependence},
\href{https://arxiv.org/abs/2405.07000}{arXiv:2405.07000}, 2024.

\bibitem{Cut}
S.~D. Cutkosky,
\emph{Multiplicities of graded families of ideals on Noetherian local rings},
preprint, 2026, \href{https://arxiv.org/abs/2603.06844v2}{arXiv:2603.06844v2}.

\bibitem{CutMontano}
S.~D. Cutkosky and J.~Monta\~no,
\emph{Multiplicities and degree functions in local rings via intersection products},
J. London Math. Soc. (2) \textbf{112} (2025), no.~5, e70338.

\bibitem{CutMontanoFamilies}
S.~D. Cutkosky and J.~Monta\~no,
\emph{Degree functions of graded families of ideals},
Collect. Math. (2026),
doi:10.1007/s13348-026-00519-w.


\bibitem{HunekeSwanson}
C.~Huneke and I.~Swanson,
\emph{Integral Closure of Ideals, Rings, and Modules},
London Mathematical Society Lecture Note Series, vol.~336,
Cambridge University Press, Cambridge, 2006.

\bibitem{PTUV}
C.~Polini, N.~V. Trung, B.~Ulrich, and J.~Validashti,
\emph{Multiplicity sequence and integral dependence},
Math. Ann. \textbf{378} (2020), no.~3--4, 951--969.

\bibitem{Stacks}
The Stacks Project Authors,
\emph{The Stacks Project}, Lemma 10.56.2, Tag 0EGH,
\url{https://stacks.math.columbia.edu/tag/0EGH}.

\end{thebibliography}
\end{document}